\documentclass{scrartcl}%{amsart}
\usepackage{footmisc}
\usepackage[margin=3cm]{geometry}
\usepackage[utf8]{inputenc}
\usepackage[utf8]{inputenc}
\usepackage{amsmath, amsthm, amssymb, amsfonts}
\usepackage{dsfont}
\usepackage[T1]{fontenc}
\usepackage[bibstyle=alphabetic, citestyle=alphabetic, maxnames=99,doi=false,isbn=false, url=false]{biblatex}  % Per cambiare acronimo nella biblio
\usepackage{pst-node}
\usepackage{tikz-cd} 
\usepackage[english]{babel} 
\usepackage{mathrsfs}
\usepackage{url}
\usepackage{setspace}
\usepackage{graphicx}
\usepackage{subcaption}
\usepackage{amsmath}
\usepackage{enumitem}
\usepackage{tcolorbox}
\usepackage{xcolor}
\usepackage{stmaryrd}
\usepackage{upgreek}
\usepackage{mathtools}
\usepackage{multicol}
\usepackage{hyperref}
\usepackage{soul}
\usepackage{youngtab}
\usepackage{ytableau}
\usepackage{float}
\floatstyle{plain}
\newfloat{program}{thp}{lop}
\floatname{program}{Program}

\theoremstyle{plain}
\newtheorem{thm}{Theorem}[section]
\newtheorem{lemma}[thm]{Lemma}
\newtheorem{prop}[thm]{Proposition}
\newtheorem{corollary}[thm]{Corollary}

\theoremstyle{definition}
\newtheorem{defi}[thm]{Definition}

\newtheorem{rmk}[thm]{Remark}
\newtheorem{example}[thm]{Example}

\newtheorem{conjecture}[thm]{Conjecture}

\DeclareMathOperator{\spec}{Spec}
\DeclareMathOperator{\Ima}{Im}
\DeclareMathOperator{\Hom}{Hom}
\DeclareMathOperator{\End}{End}
\DeclareMathOperator{\Aut}{Aut}
\DeclareMathOperator{\stab}{Stab}
\DeclareMathOperator{\soc}{soc}
\DeclareMathOperator{\Lie}{Lie}

\newcommand{\id}{\mathrm{id}}

\newcommand{\G}{\mathbb{G}}
\newcommand{\F}{\mathbb{F}}
\newcommand{\Z}{\mathbb{Z}}
\newcommand{\A}{\mathbb{A}}
\newcommand{\Pj}{\mathbb{P}}

\newcommand{\thistheoremname}{}
\newtheorem*{genericthm*}{\thistheoremname}
\newenvironment{namedthm*}[1]
  {\renewcommand{\thistheoremname}{#1}%
   \begin{genericthm*}}
  {\end{genericthm*}}

\definecolor{bibi}{rgb}{0.79, 0.08, 0.48}
\definecolor{mame}{rgb}{0.0, 0.5, 0.5}

\setcounter{tocdepth}{2}

\title{Infinitesimal rational actions}
\date{}
\author{Bianca Gouthier}

\bibliography{bib} % Per cambiare acronimo nella biblio

\begin{document}

\maketitle

\begin{abstract}
    \textbf{Abstract:} For any finite $k$-group scheme $G$ acting rationally on a $k$-variety, if the action is generically free then the dimension of $\Lie (G)$ is upper bounded by the dimension of the variety. We show that this is the only obstruction when $k$ is a perfect field of positive characteristic and $G$ is infinitesimal commutative trigonalizable. We also give necessary conditions to have faithful rational actions of infinitesimal commutative trigonalizable group schemes on varieties, and (different) sufficient conditions in the unipotent case over a perfect field.
\end{abstract}

\makeatletter \renewcommand{\@dotsep}{10000} \makeatother

\thispagestyle{empty}

\section{Introduction}

%I eat \added[id=rg,remark={we need a preposition an}]{an} apple I buy \deleted[id=rg,remark={don't use a with plural}]{a} books. I buy \replaced{nice}{bad} car.

Let $k$ be a field and $X$ be a $k$-scheme. The automorphism group functor $\Aut_X$ of $X$ that associates to every $k$-scheme $S$ the group of $S$-automorphisms $\Aut_S(X\times_kS)$ is not representable in general. This object has been extensively studied: it is known for example that if $X$ is proper then $\Aut_X$ is represented by a $k$-group scheme locally of finite type \cite{MatsOort}. If $K/k$ is a finite purely inseparable field extension, the automorphism group scheme $\Aut_K$ has been studied for example by \cite{Begueri} and \cite{Chase}. 

For $G$ a $k$-group scheme, there is a bijection between $G$-actions $G\times_kX\rightarrow X$ on $X$ and group functor homomorphisms $G\rightarrow \Aut_X$. If the $G$-action is faithful, then $G$ is a subgroup functor of $\Aut_X$. Studying faithful group scheme actions yields then information on representable subgroups of $\Aut_X$. When $Y$ is the generic point of a variety $X$ (separated, geometrically integral scheme of finite
type) and $G$ is a finite $k$-group scheme, to give a $G$-action on $Y = \spec (k(X))$ is equivalent to giving a rational $G$-action on $X$. Studying such faithful rational actions imparts then knowledge on the automorphism group functor $\Aut_K$ of separable finitely generated extensions $K/k$. 

When $K=k(t_1,\dots,t_n)$ is a purely transcendental extension of $k$, then $\Aut_K(k)$ coincides with the Cremona group $\mathrm{Cr}_n(k)=\mathrm{Bir}_k(\Pj^n_k)$ in dimension $n$, that is by definition the group of birational automorphisms of $\Pj^n_k$. The Cremona group has been deeply studied in characteristic zero, while it has been less investigated in positive characteristic (see for example the survey \cite{Dolgachev}). Dolgachev made the following conjecture for the Cremona group over a field of positive characteristic.

\begin{conjecture}
If $k$ is a field of characteristic $p>0$, the Cremona group $\mathrm{Cr}_n(k)$ does not contain elements of order $p^s$ for $s>n$
\cite[Conjecture 37]{Dolgachev}.
\end{conjecture}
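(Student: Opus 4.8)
The plan is to reduce the conjecture to the paper's infinitesimal Lie-dimension bound by realising an order-$p^s$ element as the étale generic fibre of a flat degeneration whose special fibre is an infinitesimal action of the Frobenius kernel of the length-$s$ Witt group. Suppose $f\in\mathrm{Cr}_n(k)=\Aut_{k(t_1,\dots,t_n)}(k)$ has order exactly $p^s$, and write $K=k(t_1,\dots,t_n)$, so that $G=\langle f\rangle\cong\Z/p^s\Z$ acts faithfully by $k$-automorphisms on $Y=\spec K$, the generic point of $X=\Pj^n_k$. First I would regularise the action, replacing $X$ by a normal projective model $V$ with $k(V)=K$ on which the finite group $G$ acts biregularly; then $\dim V=n$, the generic stabiliser is trivial, and $\pi\colon V\to V/G$ realises $K/K^{G}$ as a $\Z/p^s\Z$-Galois extension of $n$-dimensional function fields, generically an étale $\Z/p^s\Z$-torsor.

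The core idea is to encode this torsor through Artin--Schreier--Witt theory and then degenerate the group itself from étale to infinitesimal at fixed order $p^s$. The torsor is classified by a Witt vector $\underline a\in W_s(K^{G})$ via $\wp\,\underline x=\underline a$ with $\wp=F-1$, and the hypothesis that $f$ has order \emph{exactly} $p^s$ forces the class of $\underline a$ to have order $p^s$ in $W_s(K^{G})/\wp\,W_s(K^{G})$. Here $\Z/p^s\Z=\ker(F-1\colon W_s\to W_s)$, whose infinitesimal twin is the Frobenius kernel $W_s[F]=\ker(F\colon W_s\to W_s)$: an infinitesimal commutative unipotent --- hence trigonalizable --- group scheme of order $p^s$ with $\dim\Lie\big(W_s[F]\big)=s$, since $W_s$ is smooth of dimension $s$. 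These two are the extreme fibres of a flat family $\mathcal G\to\spec R$ of finite commutative group schemes of order $p^s$ over a discrete valuation ring $R$, with generic fibre étale $\cong\Z/p^s\Z$ and special fibre infinitesimal $\cong W_s[F]$; for $s=1$ this is precisely an Oort--Tate family degenerating $\Z/p\Z$ to $\alpha_p=W_1[F]$, and for general $s$ one builds it by iterating this degeneration along the Verschiebung filtration of $W_s$.

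Granting that the given action spreads out to a $\mathcal G$-action on a flat family $\mathcal V\to\spec R$ with $n$-dimensional fibres restricting to $V$ over the generic point, I would specialise to the closed point: the special fibre $\mathcal V_0$ is an $n$-dimensional variety carrying a faithful --- equivalently generically free --- rational action of $W_s[F]$. By upper semicontinuity the Lie dimension can only grow under specialisation, and on the special fibre it equals $\dim\Lie\big(W_s[F]\big)=s$; the paper's bound then applies to this generically free action and gives $\dim\Lie\big(W_s[F]\big)\le\dim\mathcal V_0$, that is $s\le n$, which is exactly the conjecture.

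The hard part is the previous paragraph: producing the flat family $\mathcal V$ and extending the action across it down to the infinitesimal fibre while keeping the special fibre reduced, $n$-dimensional and equipped with a \emph{faithful} $W_s[F]$-action. The difficulty is structural rather than technical: the étale group $\Z/p^s\Z$ has $\dim\Lie=0$, so it contributes no infinitesimal data whatsoever, and all $s$ units of Lie dimension needed for the bound are created only in the limit. In particular this transfer cannot be carried out on a single fibre by a logarithm, since an order-$p$ Galois automorphism is never the exponential of a $p$-nilpotent derivation --- the quotient $K/K^{\Z/p\Z}$ is separable whereas the fixed field of a $p$-nilpotent derivation cuts out a purely inseparable extension. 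The entire argument therefore hinges on realising the degeneration $\mathcal G$ simultaneously at the level of group schemes and of their actions, and controlling the special fibre of $\mathcal V$ is where I expect the main work, and the main risk, to lie.
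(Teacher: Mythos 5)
First, a point of framing: the statement you are proving is stated in the paper as an open conjecture (Dolgachev's), known only for $n=1$ (via $\mathrm{PGL}_2$) and $n=2$; the paper offers no proof of it, and its actual results run in the \emph{opposite} direction, constructing infinitesimal actions rather than ruling out \'etale ones. So your proposal must be judged as an attempt at an open problem, and it has a genuine gap, which you yourself flag: the entire argument rests on the unproven paragraph producing a flat model $\mathcal V\to\spec R$ with a $\mathcal G$-action whose special fibre is an $n$-dimensional variety carrying a \emph{faithful} $W_s^1$-action (in the paper's notation $W_s^1=\ker(F\colon W_s\to W_s)$, your $W_s[F]$). That step is not a technicality one can grant. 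Faithfulness and generic freeness are open, not closed, conditions on the base: stabilizers and centralizers jump up under specialisation, so even if a flat equivariant model exists, the special-fibre action will in general factor through a proper quotient of $W_s^1$, or have non-trivial generic stabiliser, and then Proposition \ref{dimlie} gives at best $\dim_k(\Lie(H))\leq n$ for some quotient/subgroup $H$ with small Lie algebra --- no contradiction. (There is also no control that the special fibre of a model is reduced and geometrically integral, as required for the paper's notion of variety; and the appeal to ``upper semicontinuity of Lie dimension'' concerns only the group family $\mathcal G$, which you prescribed, not the acting image on the special fibre, which is what matters.) Minor quibbles: Oort--Tate is a mixed-characteristic classification; the equal-characteristic family you want is e.g.\ $\ker\bigl(F-[c]\colon W_s\to W_s\bigr)$ over $k[c]$, whose fibre at $c\neq0$ is a form of $\Z/p^s\Z$ and at $c=0$ is $W_s^1$ --- so the group-scheme family does exist, but that was never the hard part.

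The deeper reason the approach cannot be routine is visible inside the paper itself: the infinitesimal world admits strictly \emph{more} actions than the \'etale one, so degeneration arguments transfer no obstruction. For instance, $E[p^m]$ for $E$ supersingular, of order $p^{2m}$ with one-dimensional Lie algebra, acts generically freely on every curve (Theorem \ref{mainthm} and the discussion in the introduction), whereas $\Z/p^2\Z$ cannot act faithfully on $\Pj^1_k$; and Theorem \ref{mainthm} shows $\dim_k(\Lie(G))\leq\dim(X)$ is the \emph{only} obstruction for infinitesimal commutative unipotent $G$. So there exist order-$p^s$ infinitesimal unipotent groups of Lie dimension $1$ (e.g.\ $\ker(F-V)$-type groups as in Example \ref{ptorsion}) acting generically freely on curves; nothing in a degeneration argument forces the limit of your $\Z/p^s\Z$-action to retain all $s$ units of Lie dimension acting generically freely rather than collapsing onto such a flexible small-Lie-algebra configuration. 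Your closing inequality is correct as a conditional: a generically free rational $W_s^1$-action on an $n$-dimensional variety does force $s\leq n$ by Proposition \ref{dimlie} (and for $W_s^1\subseteq W_s$ faithful is indeed equivalent to generically free, by Corollary \ref{freeifffaith} and Remark \ref{wittvectors}, since $\soc(W_s^1)=\alpha_p$). But manufacturing that hypothesis from an order-$p^s$ Cremona element is exactly the content of the conjecture, and your proposal relocates rather than resolves it.
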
 

The conjecture is true for $n=1$ since $\mathrm{PGL}_2(k)\simeq \Aut_k(k(t))$ does not contain elements of order $p^2$ if $char(k)=p>0$. Moreover,  it was proven for $n=2$ \cite{Dolgachev1}. The conjecture can be rephrased in the following way: if there exists a faithful rational action of a finite commutative $p$-group $G$ on $\mathbb{P}^n_k$ then $p^n_G=0$, where $p_G$ is the multiplication by $p$ morphism on $G$. Indeed there is a natural bijective correspondence between faithful actions of a finite commutative $p$-group $G$ on $k(t_1,\dots,t_n)$ and faithful rational actions of the corresponding constant group scheme on $\Pj^n_k$.

In this paper we are interested in rational actions of infinitesimal group schemes. The analogous of Dolgachev's conjecture for infinitesimal commutative unipotent group schemes arises naturally in one of the following ways: if $k$ is a field of characteristic $p>0$ and $G$ is an infinitesimal commutative unipotent $k$-group scheme, if there exists a faithful rational $G$-action on $\mathbb{P}^n_k$, then $p_G^n=0$ (or maybe $V_G^n=0$, where $V_G$ is the Verschiebung morphism of $G$). Both options turn out not to be true. Indeed,  for example any curve admits faithful rational actions of the $p^n$-torsion $E[p^n]$ of a supersingular elliptic curve $E$ (since in this case $E[p^n]$ is an infinitesimal commutative unipotent $k$-group scheme with one-dimensional Lie algebra and thus Theorem \ref{mainthm} applies) but $V_{E[p^n]}\neq0$ and $p_{E[p^n]}\neq0$ if $n>1$. 
What is indeed true is that if there exists a faithful rational $G$-action on a $k$-variety $X$ of dimension $n$, then $V_{\ker(F_G)}^n=0$. More precisely:

\begin{prop}\label{actionkerF}
    Let $G$ be an algebraic $k$-group scheme with commutative Frobenius kernel and $X$ be a $k$-variety of dimension $n$. If there exists a faithful rational $G$-action on $X$, then $s=dim_k(\Lie (\ker(F_{G})^m))\leq n$ and $V_{\ker(F_{G})^u}^{n-s}=0$, where $\ker(F_G)^m$ is the maximal $k$-subgroup scheme of multiplicative type of $\ker(F_G)$ and $\ker(F_G)^u:=\ker(F_G)/\ker(F_G)^m$.
\end{prop}

 The inverse implication of Proposition \ref{actionkerF} does not always hold true, see Example \ref{counterexample}. In the diagonalizable case, these actions are well understood and the converse statement is known.
Moreover, we show that there exist faithful rational actions of any infinitesimal commutative unipotent group scheme $G$ defined over a perfect field on any variety of dimension $n$ if $V_G^n=0$ (Proposition \ref{suffcondfaithact}).
As a consequence, the converse of Proposition \ref{actionkerF} holds true over a perfect field for infinitesimal commutative unipotent $k$-group schemes of height one and we have the following characterization: 

\begin{corollary}
    Let $k$ be a perfect field of characteristic $p>0$, $G$ be an infinitesimal commutative unipotent $k$-group scheme of height one and $X$ be a $k$-variety of dimension $n$. There exists a faithful rational $G$-action on $X$ if and only if $V_G^n=0$.
\end{corollary}

We are more precisely interested in rational actions which are generically free. Indeed in positive characteristic not all faithful actions admit an open dense subset $U\subseteq X$ that is $G$-stable and such that the action of $G$ on $U$ is free. For any finite $k$-group scheme $G$ acting rationally on a $k$-variety $X$, if the action is generically free then the dimension of $\Lie (G)$ is upper bounded by the dimension of the variety. 
Our main result is the following Theorem, which proves that this bound is the only obstruction to the existence of generically free actions for infinitesimal commutative trigonalizable (see Remark \ref{thm+Brion}) group schemes over a perfect field. If $G$ is unipotent, we also show that any generically free rational action on $X$ of (any iterated of) the Frobenius kernel of $G$ extends to a generically free rational action of $G$ on $X$. The proof we give is constructive and enables one to explicitly write such actions. 

\begin{thm}\label{mainthm}
    Let $k$ be a perfect field of characteristic $p>0$ and $G$ be an infinitesimal commutative unipotent $k$-group scheme with Lie algebra of dimension $s$. Then for every $k$-variety $X$ of dimension $\geq s$ there exist generically free rational actions of $G$ on $X.$ Moreover, for any $r\geq1$, any generically free rational action of $\ker(F_G^r)$ on $X$ can be extended to a generically free rational action of $G$ on $X$.
\end{thm}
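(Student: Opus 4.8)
The plan is to deduce the first assertion from the second. Since $\Lie(\ker F_G^r)=\Lie(G)$ for every $r\ge 1$ (Frobenius induces zero on tangent spaces, so $\ker(dF^r)=\Lie G$), the group $\ker(F_G)$ is an infinitesimal commutative unipotent height-one group scheme with $\dim_k\Lie(\ker F_G)=s$. By Corollary \ref{heigthone} it admits a generically free rational action on any $X$ with $\dim X\ge s$; applying the extension statement with $r=1$ then produces a generically free rational $G$-action. Thus it suffices to prove the ``moreover''. I would further reduce it to a single Frobenius layer: writing $H_r=\ker(F_G^r)$ and letting $h$ be the height of $G$, one has $H_h=G$ and a filtration $H_r\subseteq H_{r+1}\subseteq\cdots\subseteq H_h=G$, so it is enough to extend a generically free action of $H_r$ to one of $H_{r+1}$ and iterate. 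The relevant quotient $H_{r+1}/H_r\cong F_G^r(H_{r+1})$ is killed by $F$, hence is again infinitesimal commutative unipotent of height one.

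Next I would translate actions into operators. A rational action of an infinitesimal $G$ on $X$ is the same as a structure of $\mathrm{Dist}(G)$-module algebra on $K=k(X)$, where $\mathrm{Dist}(G)=\Os(G)^{*}$ is the (finite, commutative, local) distribution algebra; under the inclusion $H_r\hookrightarrow H_{r+1}$ one has $\mathrm{Dist}(H_r)\subseteq\mathrm{Dist}(H_{r+1})$, and extending the action means extending this module-algebra structure along the inclusion. A key simplification is that generic freeness is detected solely on the Lie algebra: the stabilizer of the generic point is an infinitesimal group scheme over $K$ whose Lie algebra is the kernel of $\rho_K\colon \Lie(G)\otimes_k K\to\mathrm{Der}_k(K)$, $v\otimes a\mapsto a\,\rho(v)$, so the action is generically free if and only if a $k$-basis of $\Lie(G)$ maps to $K$-linearly independent derivations. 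Since $\Lie(H_r)=\Lie(G)$, the Lie-algebra part $\rho$ of any extension coincides with that of the given $H_r$-action; hence every extension we build is automatically generically free, and the whole problem reduces to the mere \emph{existence} of a compatible $\mathrm{Dist}(H_{r+1})$-action.

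The heart of the argument is this prolongation. Using that $k$ is perfect, I would fix a Dieudonné-type normal form for $G$ and decompose the problem into cyclic pieces, each of which amounts to prolonging an iterative higher (Hasse--Schmidt) derivation by one Frobenius level: given the operators of $\mathrm{Dist}(H_r)$ already acting on $K$ (the divided powers up to level $p^{r}$ of each generator), one must produce the next operator $D_{p^{r}}$ satisfying the required divided-power identity $D_{p^{r}}(ab)=\sum_{i+j=p^{r}}D_i(a)D_j(b)$, commuting with the others, and compatible with the $p$-operation dictated by the restricted Lie structure. Concretely this is an integrability problem: the obstruction to finding $D_{p^{r}}$ is a derivation-valued class that I expect to kill by using the perfectness of $k$ to extract the requisite $p$-th roots and by exploiting the available transcendence degree ($\dim X\ge s$) to solve the defining equations on a transcendence basis before extending the solution to all of $K$. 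I expect this obstruction-vanishing/integrability step to be the main difficulty; everything else is bookkeeping of the Hopf-algebra relations.

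Finally I would assemble the pieces: iterating the one-layer prolongation along $H_1\subseteq H_2\subseteq\cdots\subseteq H_h=G$ upgrades the height-one action of $\ker(F_G)$ furnished by Corollary \ref{heigthone} to a rational action of $G$, and by the remark above this action is generically free since its Lie-algebra part is unchanged. This simultaneously yields the existence statement and, starting instead from an arbitrary generically free action of $\ker(F_G^r)$, the extension statement. The construction is explicit at each layer (the operators $D_{p^{i}}$ are written down on a transcendence basis), which is the source of the constructive nature claimed in the theorem.
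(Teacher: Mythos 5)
Your overall skeleton (induct on the filtration $\ker(F_G^i)$, translate actions into module-algebra structures on $K=k(X)$, observe that generic freeness is detected on $\Lie(G)=\Lie(\ker F_G)$ so any extension of a generically free action is automatically generically free) does match the paper's strategy. But there are two genuine problems. First, your base case is circular: Corollary \ref{heigthone} produces only a \emph{faithful} rational action, not a generically free one, and — worse — its proof rests on Proposition \ref{faithrationalactions}, which itself invokes Theorem \ref{mainthm}. You cannot use it as input to prove the theorem. The correct, independently proven input is Proposition \ref{frobkeraction}: every commutative trigonalizable height-one group scheme over a perfect $k$ is the Frobenius kernel of a $k$-solvable group, and a Frobenius-kernel torsor argument yields generically free rational actions on any $X$ of dimension $\geq\dim_k(\Lie(G))$.

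Second, and more seriously, the heart of the argument is not carried out: you name the one-layer prolongation as ``an integrability problem'' whose obstruction you ``expect to kill,'' but this obstruction analysis \emph{is} the theorem. The paper's proof spends essentially all of its length on exactly this step, and it requires machinery your sketch does not supply: Proposition \ref{structureicu} presents $k[G_i^\vee]$ as $k[G_{i-1}^\vee][T_{i1},\dots,T_{ir_i}]/(P_{i1},\dots,P_{ir_i})$ with the $P_{ij}$ primitive (no product or ``cyclic'' decomposition of $G$ is available in general, so your proposed Dieudonn\'e normal form reduction is not justified); Lemma \ref{actiononquotient} shows the new operators are already determined on $kK^p$; Proposition \ref{pbasis} produces a $p$-basis adapted to the derivations coming from $\ker(F_G)$; Lemma \ref{extendtopbasis} reduces the extension to choosing the values $D_{ij}(t_h)$; Lemma \ref{commder} shows the relevant commutators and $p$-power defects are derivations; and Corollary \ref{systemsol} gives the precise compatibility criterion under which the resulting system of differential equations in the unknowns $D_{ij}(t_h)$ is solvable — the verification of that criterion is a delicate triple recursion (on the Frobenius level $i$, the generator index $j$, and the $p$-basis index $h$). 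Note also that this recursion is run only when $\dim X=s$; the case $\dim X=\ell>s$ is not handled by ``extra transcendence degree'' in the equations, but by the separate trick of running the argument for $G\times_k\alpha_p^{\ell-s}$, whose Lie algebra has dimension exactly $\ell$, and then restricting to the subgroup $G$. Without these ingredients your proposal records the statement to be proved rather than a proof of it.
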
 
The difficulty is to construct actions in low dimension, i.e. close to the dimension of $\Lie (G)$. Indeed,  it is not so difficult to construct actions in high dimension for any infinitesimal trigonalizable group scheme (see Corollary \ref{asymptotic}). Fakhruddin proved that if $G$ is infinitesimal and $Y$ is a normal projective curve with a rational action of $G$, if there exists a normal projective variety $X$ with an action of $G$ and a $G$-equivariant dominant rational morphism $X\dashrightarrow Y$, then $G$ acts on $Y$ by automorphisms \cite[Proposition 2.2]{Fakhruddin}. In particular, in the above situation, if $Y$ is the projective line and the action is faithful, then $G$ is a subgroup scheme of $\mathrm{PGL}_{2,k}$. By Theorem \ref{mainthm}, for all unipotent infinitesimal group schemes with one-dimensional Lie algebra there exist generically free rational actions on the projective line. Nevertheless, most of these group schemes are not contained in $\mathrm{PGL}_{2,k}$: indeed for $p>2$ the only infinitesimal unipotent subgroup schemes of $\mathrm{PGL}_{2,k}$ are isomorphic to $\alpha_{p^n}$ (see for example \cite[Proposition 2.2]{gouthier2024unexpectedsubgroupschemespgl2k} to which we refer also for the case $p=2$), while there are many more of them e.g. the $p$-torsion of a supersingular elliptic curve or see also Examples \ref{answerbrion}, \ref{noncommutative} and \cite{gouthier2026infinitesimal} for a complete description of these group schemes in the commutative case over an algebraically closed field.  As a consequence, most of these rational actions on the projective line are not induced by actions, defined everywhere, on projective normal varieties of higher dimension. 

Combining Theorem \ref{mainthm} and the diagonalizable case treated by Brion in \cite[\textsection3]{brion2022actions} (see Remark \ref{thm+Brion}) the converse of Proposition \ref{actionkerF} is true, over a perfect field, for infinitesimal commutative trigonalizable $k$-group schemes with Lie algebra of dimension upper bounded by the dimension of the variety (in particular, if $s=\dim_k(\Lie (\ker(F_{G})^d))$ and  $\dim_k(\Lie (G))\leq n$, then $V_{\ker(F_{G})^u}^{n-s}=0$). 

Notice that if an infinitesimal commutative unipotent $k$-group scheme $G$ with Lie algebra of dimension $n$ can be embedded in a smooth connected $n$-dimensional algebraic group $\mathcal{G}$, then $G$ acts generically freely on it (by multiplication). Brion asked \cite[\textsection1]{brion2022actions} if, already in the one-dimensional case, these are the only examples that arise and moreover if these group schemes are always commutative (see also \cite[Remark 2.10]{Fakhruddin}). Examples \ref{answerbrion} and \ref{noncommutative} answer to these questions by the negative. The former shows that there are generically free rational actions on curves of infinitesimal commutative unipotent group schemes that are not subgroups of a smooth connected one-dimensional algebraic group. The latter shows that there exist generically free rational actions of non-commutative infinitesimal group schemes on varieties.

We conclude this introduction by making the link between this work and the notion of essential dimension. Informally speaking, the \textit{essential dimension} of an algebraic object is an integer that measures its complexity. This notion was introduced by Buhler and Reichstein in \cite{BR} for finite groups and was then extended by Merkurjev for functors from the category of field extensions of a fixed base field $k$ to the category of sets \cite{BF}. For a $k$-group scheme $G$, its essential dimension $\mathrm{ed}_k(G)$ computes, roughly speaking, the number of parameters needed to define all $G$-torsors over all schemes over $k$. Tossici conjectured \cite[Conjecture 1.4]{Tossici} that if $k$ is a field of positive characteristic and $G$ is a finite commutative unipotent $k$-group scheme, then $\mathrm{ed}_k(G)\geq n_{V}(G)$ where $n_{V}(G)$ is the order of nilpotency of the Verschiebung morphism of $G$. The conjecture is known to be true for $n_V(G)=2$ after Fakhruddin \cite[{Theorem 1.1}]{Fakhruddin}. Our hopes are that Theorem \ref{mainthm} might lead to further progress in the proof of this conjecture in the infinitesimal case.

\vspace{1em}
\noindent\textbf{Outline of the paper.} In Section \ref{FGS} we recall some notions and results around finite (commutative) group schemes and we introduce the \textit{socle} of a finite group scheme. Moreover, we prove Proposition \ref{structureicu} giving a description of the Hopf algebra of an infinitesimal commutative unipotent group scheme over a perfect field, which plays  an important role in the proof of Theorem \ref{mainthm}. In Section \ref{actions} we recall the main definitions and results around (rational) actions of finite group schemes on varieties and on their algebraic counterpart given by module algebra structures. In Section \ref{nilpder} we focus on nilpotent derivations, which are often encountered when studying actions of infinitesimal group schemes (see Proposition \ref{infinitesimalactions} and Example \ref{alphap}). Moreover, we study $p$-bases of finite field extensions and we prove Corollary \ref{systemsol} describing when some systems of differential equations admit a solution. This result is another of the building blocks needed for the proof of Theorem \ref{mainthm}. 

In Section \ref{genfreeactions} we deal with generically free actions. In the first part we prove the existence part of Theorem \ref{mainthm} in the case of commutative trigonalizable group schemes of height one (Proposition \ref{frobkeraction}). We then proceed with the proof of the general case. Section \ref{faithactions} is devoted to Dolgachev's conjecture revisited for infinitesimal group schemes and, more generally, to studying faithful rational actions of infinitesimal group schemes. Proposition \ref{actionkerF} gives necessary (but not sufficient, see the counterexample \ref{counterexample}) conditions for the existence of faithful rational actions of infinitesimal commutative trigonalizable group schemes. Moreover, we show that there exist faithful rational actions of any infinitesimal commutative unipotent group scheme $G$ defined over a perfect field on any variety of dimension $n$ if $V_G^n=0$ (Proposition \ref{suffcondfaithact}). We finish the paper with an example that illustrates our results about faithful rational actions in the case of the connected part of the $p$-torsion of abelian varieties.

\vspace{1em}
\noindent\textbf{Acknowledgments.} This work was done during my PhD at the Institut de Math\'ematiques de Bordeaux carried out under the supervision of Dajano Tossici, to whom I am deeply grateful for his guidance and constant support. I would also like to thank Michel Brion for engaging discussions and valuable comments on the initial version of this work. I thank Matthieu Romagny for carefully reading my PhD thesis, leading to improvements also of this paper. I am thankful to Xavier Caruso for his keen interest in the project and for offering a distinct perspective that proved helpful in various aspects. Additionally, I thank Damien Robert for providing valuable inputs. I am also grateful to the anonymous referee for their comments and remarks contributing to a better exposition in this final version.

\section{Finite group schemes}\label{FGS}

Throughout the whole work, $k$ will denote a ground field of characteristic $p>0$ and $\overline{k}$ an algebraic closure of $k$. Moreover, for every $k$-algebra $R$ and $k$-scheme $X$, we denote by $X_R$ the $R$-scheme $X\times_{\spec(k)}\spec(R)$. By $k$-\textit{algebraic} scheme we mean a $k$-scheme of finite type and we call $k$-\textit{algebraic group} a $k$-algebraic group scheme. All the group schemes considered will be algebraic groups. By $k$-\textit{variety} we mean a separated, geometrically integral $k$-scheme of finite type and we call \textit{curve} any $k$-variety of dimension $1$. If $X$ is a $k$-variety of dimension $n$, then its function field $K=k(X)$ is a separable, finitely generated extension of $k$ of transcendence degree $n$. For $G=\spec(A)$ an affine $k$-group scheme represented by the Hopf algebra $A$, we denote by $\Delta\colon A\rightarrow A\otimes_kA$ its comultiplication and by $\varepsilon\colon A\rightarrow k$ its counit. For $G$ an affine $k$-group scheme, we also denote by $k[G]$ the Hopf algebra representing it. 

\begin{defi}[Absolute Frobenius]
    Let $X$ be a $k$-scheme. The \textit{absolute Frobenius morphism} $\sigma_X\colon X\rightarrow X$ acts as the identity map on the underlying topological space $|X|$ while on the sections of $\mathcal{O}_X$ over an open subset $U\subseteq X$ it acts as the map \begin{align*}
        \mathcal{O}_X(U)&\rightarrow\mathcal{O}_X(U),\\
        a&\mapsto a^p.
    \end{align*}
\end{defi}

\begin{defi}[Relative Frobenius]
    Let $X$ be a $k$-scheme and $X^{(p)}=X\times_{k,f}\spec(k)$ be the base change with respect to the Frobenius morphism $f\colon k\rightarrow k,c\mapsto c^p$ of $k$. The \textit{relative Frobenius morphism} $F_X\colon X\rightarrow X^{(p)}$ is defined by the diagram \begin{center}
        \begin{tikzcd}
            X\arrow[drr,"\sigma_X", bend left]\arrow[ddr, bend right]\arrow[dr, dashed,"F_X"]&&\\
            &X^{(p)}\arrow[r]\arrow[d]&X\arrow[d]\\
            &\spec(k)\arrow[r,"\sigma_{\spec(k)}"]&\spec(k).
        \end{tikzcd}
    \end{center}
    We will refer to the relative Frobenius morphism just as the \textit{Frobenius morphism}.
\end{defi}

\begin{rmk}\leavevmode
\begin{enumerate}
\item The assignment $X\mapsto F_X$ is functorial, compatible with fiber products and commutes with extension of the base field.
    \item If $X$ is a scheme over $\F_p$, then $X^{(p)}\simeq X$ and the relative Frobenius $F_X$ coincides with the absolute Frobenius $\sigma_X$. Moreover,  for any extension $k\supseteq\F_p$ we have that $X_k^{(p)}\simeq X_k$ and $F_{X_k}=\sigma_X\times\id_k.$
    \item  When $G$ is a $k$-group scheme, then $G^{(p)}$ is also a $k$-group scheme and the Frobenius morphism $F_G\colon G\rightarrow G^{(p)}$ is a homomorphism of group schemes \cite[II.\textsection7, 1]{DG}. If $F_G^n=0$ for some $n\geq1$, then $G$ is said to have \textit{height} $\leq n$ and its height is the nilpotency index $\mathrm{ht}(G)$ of $F_G$.
\end{enumerate}
\end{rmk}

\begin{prop}\label{Liu}
    For any $k$-variety $X$, the Frobenius twist $X^{(p)}$ is geometrically integral. Moreover the relative Frobenius $F_X\colon X\rightarrow X^{(p)}$ induces a finite field extension of function fields $k\left(X^{(p)}\right)\subseteq k(X)$ of degree $p^{\dim(X)}$ and an isomorphism of $k\left(X^{(p)}\right)$ with the composite of the fields $ k$ and $(k(X))^p.$
\end{prop}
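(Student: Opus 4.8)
The plan is to handle the three assertions in turn—geometric integrality of $X^{(p)}$, the identification of $k(X^{(p)})$ with the compositum $k\cdot (k(X))^p$, and the degree formula—working affine-locally for the latter two.

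First I would show $X^{(p)}$ is geometrically integral. By compatibility of the Frobenius twist with extension of the base field (the preceding Remark), one has $(X^{(p)})_{\overline{k}}\simeq (X_{\overline{k}})^{(p)}=X_{\overline{k}}\times_{\overline{k},f_{\overline{k}}}\spec(\overline{k})$, where $f_{\overline{k}}$ is the Frobenius of $\overline{k}$. Since $\overline{k}$ is perfect, $f_{\overline{k}}$ is an automorphism, so $\spec(f_{\overline{k}})$ is an isomorphism and base change along it gives $(X_{\overline{k}})^{(p)}\simeq X_{\overline{k}}$ as schemes. As $X$ is geometrically integral, $X_{\overline{k}}$ is integral, hence so is $(X^{(p)})_{\overline{k}}$; thus $X^{(p)}$ is geometrically integral, and in particular integral.

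Next I would pass to an affine open $\spec(R)\subseteq X$ with $R$ a finitely generated $k$-domain and $\mathrm{Frac}(R)=k(X)=:K$. Then $X^{(p)}$ is covered by $\spec(R^{(p)})$ with $R^{(p)}=R\otimes_{k,f}k$, and $F_X$ corresponds to the $k$-algebra map $F_X^{\#}\colon R^{(p)}\to R$, $r\otimes c\mapsto c\,r^{p}$, whose image is the $k$-subalgebra $k[R^{p}]$ generated by all $p$-th powers. As $R$ is generated over $k$ by finitely many $x_i$ with each $x_i^{p}\in k[R^{p}]$, the ring $R$ is a finite $k[R^{p}]$-module, so $F_X$ is finite. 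For dominance I would use that the absolute Frobenius $\sigma_X$ is a universal homeomorphism and factors (as in the defining diagram) as the projection $X^{(p)}\to X$—itself a homeomorphism, being the base change of the Frobenius of $k$—composed with $F_X$; hence $|F_X|$ is a homeomorphism, $F_X$ is dominant, and it induces an injection $k(X^{(p)})\hookrightarrow K$. Localizing $F_X^{\#}$ at the generic point identifies $k(X^{(p)})$ with $\mathrm{Frac}(k[R^{p}])$, and a direct check gives $\mathrm{Frac}(k[R^{p}])=k\cdot K^{p}$, the compositum of $k$ and $K^{p}=(k(X))^{p}$ inside $K$. This proves the isomorphism assertion.

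Finally I would compute the degree. Since $X$ is a $k$-variety, $K=k(X)$ is a separably generated finitely generated extension of $k$ of transcendence degree $n=\dim X$. The main point is the identity $[K:k\cdot K^{p}]=p^{n}$, which I would deduce from the theory of $p$-bases: for such an extension $\dim_K\Omega_{K/k}=\mathrm{trdeg}_k K=n$, and a family $x_1,\dots,x_n\in K$ for which $dx_1,\dots,dx_n$ is a $K$-basis of $\Omega_{K/k}$ forms a $p$-basis, i.e. the monomials $\prod_i x_i^{e_i}$ with $0\le e_i<p$ are a $k\cdot K^{p}$-basis of $K$; hence $[K:k\cdot K^{p}]=p^{n}$. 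Combined with the previous paragraph this yields $[k(X):k(X^{(p)})]=p^{\dim X}$. This separable-generation step, tying geometric integrality to the exact value $p^{n}$ through differentials, is the technical heart of the proof; the finiteness and dominance of $F_X$ are comparatively formal.
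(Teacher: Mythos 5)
Your proof is correct, but it is necessarily a different route from the paper's, because the paper gives no argument at all: its ``proof'' is a single citation to Liu (Chapter 3, Corollary 2.27). What you have written is essentially a self-contained reconstruction of what lies behind that citation, and it holds together: geometric integrality follows from compatibility of the twist with base field extension plus perfectness of $\overline{k}$; dominance of $F_X$ from the factorization $\sigma_X=\pi\circ F_X$ with $\pi$ a (universal) homeomorphism, which combined with the affine-local formula $F_X^{\#}(r\otimes c)=c\,r^p$ and injectivity of $F_X^{\#}$ (the source is a domain and the map is dominant) identifies $k(X^{(p)})$ with $\mathrm{Frac}(k[R^p])=k\cdot K^p$; and the degree comes from separability of $K/k$ (which the paper records as a consequence of geometric integrality) together with the dictionary between $K$-bases of differentials and $p$-bases. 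The one step you leave implicit is that the $p$-basis criterion (the paper's Remark \ref{canbasis}, quoted from Bourbaki) is stated for $\Omega^1_{K/L}$ with $K^p\subseteq L$, here $L=k\cdot K^p$, whereas you work with $\Omega_{K/k}$; one should note that every $k$-derivation of $K$ kills $p$-th powers and hence all of $k\cdot K^p$, so the natural surjection $\Omega_{K/k}\rightarrow\Omega_{K/k\cdot K^p}$ is an isomorphism. That is a one-line standard fact, not a gap. The trade-off between the two approaches is the obvious one: the citation buys brevity, while your argument makes visible that the proposition needs nothing beyond tools already present in the paper ($p$-bases, Kähler differentials, and elementary properties of Frobenius), and in particular isolates where geometric integrality is genuinely used, namely to get separability of $k(X)/k$ and hence the exact value $p^{\dim X}$ rather than just some power of $p$.
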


\begin{proof}
    See \cite[Chapter 3, Corollary 2.27]{Liu}.
\end{proof}

\begin{defi}[Lie algebra]
    Let $G$ be an affine $k$-group scheme and denote by $I_G=\ker(\varepsilon)$ its augmentation ideal (where $\varepsilon$ is the counit map $\varepsilon\colon k[G]\rightarrow k$). We define the \textit{Lie algebra} of $G$ to be $\Lie (G)=\Hom_k(I_G/I_G^2,k).$ As a $k$-vector space $\Lie (G)$ is the Zariski tangent space of $G$ at the identity element $e_G$ and it has an additional structure of Lie algebra (see for example \cite[II.\textsection4, 4]{DG}).
\end{defi}

\begin{rmk}\label{liefrobkern}
    Let $G$ be a $k$-group scheme and $F_G\colon G\rightarrow G^{(p)}$ its Frobenius morphism. Then $\Lie (G)=\Lie (\ker(F_G))$ (see \cite[II.\textsection7, 1.4]{DG}).
\end{rmk}

\begin{defi}[Infinitesimal group scheme]
    A $k$-group scheme $G=\spec(A)$ is said to be \textit{infinitesimal} if its augmentation ideal $I_G=\ker(\varepsilon\colon A\rightarrow k)$ is nilpotent.
\end{defi}

Notice that non-trivial infinitesimal group schemes exist only over fields of positive characteristic: indeed, by Cartier's Theorem, in characteristic zero all algebraic groups are smooth.

\subsection{Finite commutative group schemes}

Let $G=\spec(A)$ be an affine commutative $k$-group scheme and $$F_A\colon A^{(p)}=A\otimes_{k,f}k\rightarrow A,\quad a\otimes x\mapsto xa^p$$ be the relative Frobenius morphism of $A$, where $f$ denotes the Frobenius morphism of $k.$
For any $k$-vector space $V$, consider the $k$-vector space of symmetric tensors of order $p$, $\left(V^{\otimes p}\right)^{S_p}\subseteq V^{\otimes p}$. Notice that, since $G$ is commutative, $A$ is cocommutative and thus we have that the map given by the comultiplication $A\rightarrow A^{\otimes p}$ factors via $\left(A^{\otimes p}\right)^{S_p}$: \begin{center}
    \begin{tikzcd}
        A\arrow[r]\arrow[d,dashed]&A^{\otimes p}\\
        \left(A^{\otimes p}\right)^{S_p}.\arrow[ru,hook]&
    \end{tikzcd}
\end{center}
Let $s\colon A^{\otimes p}\rightarrow \left(A^{\otimes p}\right)^{S_p},a_1\otimes\dots\otimes a_p\mapsto\sum_{\sigma\in S_p}a_{\sigma(1)}\otimes\dots\otimes a_{\sigma(p)}$ be the symetrization map. By \cite[IV.\textsection3, 4.1]{DG}, $\left(A^{\otimes p}\right)^{S_p}$ is the direct sum of $s(A^{\otimes p})$ and of the submodule generated by $\{a\otimes\dots\otimes a\}_{a\in A}.$ Moreover the canonical map \begin{align*}
    \left(A^{\otimes p}\right)^{S_p}/s(A^{\otimes p})&\rightarrow A\otimes_{k,f}k\\
    a\otimes\dots\otimes a&\mapsto a\otimes 1
\end{align*} is a bijection.

\begin{defi}[Verschiebung]
    The \textit{Verschiebung morphism} $V_A$ of $A$ is by definition the composite $$A\longrightarrow \left(A^{\otimes p}\right)^{S_p}\stackrel{\lambda_A}{\longrightarrow}A\otimes_{k,f}k=A^{(p)}$$
where $\lambda_A$ is the unique $k$-linear map sending $a\otimes\dots\otimes a\mapsto a\otimes 1$ for any $a\in A$. The \textit{Verschiebung morphism} $V_G\colon G^{(p)}\rightarrow G$ is the homomorphism of group schemes induced by $V_A$.
\end{defi}

Notice that $\lambda_A$ is well-defined for what was said previously.
The assignment $G\mapsto V_G$ is functorial, compatible with fiber products and commutes with extension of the base field.

\begin{rmk}\label{FrobVerrmk}
    For any $k$-algebra $B$ the multiplication morphism $\left(B^{\otimes p}\right)^{S_p}\rightarrow B$ is given by the composite $$\left(B^{\otimes p}\right)^{S_p}\stackrel{\lambda_B}{\longrightarrow}B^{(p)}\stackrel{F_B}{\longrightarrow}B$$ and for any $k$-linear morphism $\varphi\colon B\rightarrow C$ we have the commutative diagram \begin{center}
    \begin{tikzcd}
        \left(B^{\otimes p}\right)^{S_p}\arrow[r,"\lambda_B"]\arrow[d,"\varphi^{\otimes p}"]&B^{(p)}\arrow[d,"\varphi^{(p)}"]\\
        \left(C^{\otimes p}\right)^{S_p}\arrow[r,"\lambda_C"]&C^{(p)}.
    \end{tikzcd}
\end{center} For more details see \cite[IV.\textsection3, 4]{DG}: in loc. cit. the second fact is stated for morphisms of $k$-algebras but can actually be generalized for any $k$-linear morphism.
\end{rmk}

\begin{rmk}\label{homalgebra}
   Let $(A,\Delta,\varepsilon)$ and $(B,m,u)$ be respectively a coalgebra and an algebra over $k$. Then $\Hom_k(A,B)$ has a $k$-algebra structure with multiplication given by $$\phi\otimes\chi\mapsto m\circ\phi\otimes\chi\circ\Delta$$ and unit $$k\rightarrow\Hom_k(A,B),1\mapsto u\circ\varepsilon.$$
\end{rmk}

\begin{lemma}\label{Frobofmaps}
    Let $G=\spec(A)$ be an affine commutative $k$-group scheme, $B$ be a $k$-algebra and let $C$ denote the $k$-algebra of $k$-linear morphisms $\Hom_k(A,B)$. For every element $g\in C^{(p)}$, it holds that $$F_C(g)=F_B\circ g\circ V_A.$$
\end{lemma}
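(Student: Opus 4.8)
The plan is to reduce the stated identity to a single statement about the convolution $p$-th power in the algebra $C=\Hom_k(A,B)$, and then to unwind that statement using the defining factorizations of $F_B$ and $V_A$ recorded in Remark \ref{FrobVerrmk}. Throughout I use that $C$ is commutative (because $A$ is cocommutative and $B$ is commutative), so that its relative Frobenius $F_C\colon C^{(p)}\to C$ is the $k$-linear map determined on a generator by $\phi\otimes x\mapsto x\,\phi^p$, where $\phi^p$ denotes the $p$-fold convolution product of $\phi$ with itself.

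First I would make precise the identification under which the right-hand side even makes sense. Since $A$ is finite dimensional, the canonical base-change map $C^{(p)}=\Hom_k(A,B)\otimes_{k,f}k\to \Hom_k(A^{(p)},B^{(p)})$, sending $\phi\otimes x$ to the map $a\otimes t\mapsto \phi(a)\otimes xt$, lets us regard $g\in C^{(p)}$ as a $k$-linear map $A^{(p)}\to B^{(p)}$; on the generator $\phi\otimes x$ this map is $x\,\phi^{(p)}$, where $\phi^{(p)}$ is the Frobenius base change of $\phi$. Both $g\mapsto F_C(g)$ and $g\mapsto F_B\circ g\circ V_A$ are $k$-linear in $g$ (for the latter, using that $F_B$ is $k$-linear and that scalars on the $B^{(p)}$-side pass through $F_B$), so it suffices to verify the identity on generators $\phi\otimes x$. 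Pulling the scalar $x$ out of both sides reduces everything to the core identity $\phi^p=F_B\circ\phi^{(p)}\circ V_A$ for every $\phi\in\Hom_k(A,B)$.

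To prove this core identity I would expand the convolution power as $\phi^p=m_B^{[p]}\circ\phi^{\otimes p}\circ\Delta_A^{[p]}$, where $\Delta_A^{[p]}\colon A\to A^{\otimes p}$ and $m_B^{[p]}\colon B^{\otimes p}\to B$ are the $p$-fold iterates of comultiplication and multiplication. Since $A$ is cocommutative, $\Delta_A^{[p]}$ factors as $\iota\circ\bar\Delta$ through $(A^{\otimes p})^{S_p}$, and since $\phi^{\otimes p}$ is $S_p$-equivariant it carries symmetric tensors to symmetric tensors; restricting to symmetric tensors, the iterated multiplication $m_B^{[p]}$ equals $F_B\circ\lambda_B$ and the naturality square of Remark \ref{FrobVerrmk} (valid for any $k$-linear $\phi$) gives $\lambda_B\circ\phi^{\otimes p}=\phi^{(p)}\circ\lambda_A$. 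Combining these yields $\phi^p=F_B\circ\lambda_B\circ\phi^{\otimes p}\circ\bar\Delta=F_B\circ\phi^{(p)}\circ\lambda_A\circ\bar\Delta$, and recognizing $\lambda_A\circ\bar\Delta=V_A$ from the definition of the Verschiebung finishes the argument.

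The step I expect to require the most care is the bookkeeping around the Frobenius twist: keeping track of how the scalar $x$ in $\phi\otimes x$ is transported through the base-change identification and through $F_B$, and checking that the symmetric-tensor factorization of $\Delta_A^{[p]}$ is compatible with $\phi^{\otimes p}$ so that Remark \ref{FrobVerrmk} applies on the nose. Once the identifications are pinned down, the remaining manipulations are formal consequences of coassociativity, associativity, and the definitions of $\lambda_A$, $\lambda_B$, $F_B$, and $V_A$.
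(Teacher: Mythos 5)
Your proof is correct and takes essentially the same route as the paper's: reduce by linearity to elements $\phi\otimes 1$, expand the convolution $p$-th power as $m_B^{[p]}\circ\phi^{\otimes p}\circ\Delta_A^{[p]}$ factoring through $\left(A^{\otimes p}\right)^{S_p}$ by cocommutativity, and conclude from the two facts of Remark \ref{FrobVerrmk} together with $V_A=\lambda_A\circ\bar{\Delta}$. The only differences are cosmetic bookkeeping — you make explicit the identification $C^{(p)}\rightarrow\Hom_k\left(A^{(p)},B^{(p)}\right)$ and the commutativity of $C$; just note that the lemma does not assume $A$ finite dimensional, which is harmless since only the existence (not bijectivity) of that canonical base-change map is needed to interpret and prove the identity.
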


\begin{proof}
    Since $F_C$ is a morphism of $k$-algebras, it is enough to show the result for $g$ of the form $f\otimes1=f^{(p)}$ with $f\in C=\Hom_k(A,B)$. We then have $F_C(f\otimes1)=f^p$ and we thus wish to show that $$f^p=F_B\circ f^{(p)}\circ V_A.$$ Using the definition of multiplication of the algebra $C$ one sees that the power $f^p$ is equal to the composite
 $$A\stackrel{comult}{\longrightarrow}\left(A^{\otimes p}\right)^{S_p}\stackrel{f^{\otimes p}}{\longrightarrow}\left(B^{\otimes p}\right)^{S_p}\stackrel{mult}{\longrightarrow }B.$$ By Remark \ref{FrobVerrmk} we obtain the commutative diagram \begin{center}
    \begin{tikzcd}
        A\arrow[r,"comult"]\arrow[dr,"V_A"]&\left(A^{\otimes p}\right)^{S_p}\arrow[r,"f^{\otimes p}"]\arrow[d,"\lambda_A"]&\left(B^{\otimes p}\right)^{S_p}\arrow[d,"\lambda_B"]\arrow[r,"mult"]&B\\
        &A^{(p)}\arrow[r,"f^{(p)}"]&B^{(p)}\arrow[ur,"F_B"]&
    \end{tikzcd}
\end{center} and thus the statement.
\end{proof}

Recall that a \textit{finite $k$-group scheme} is a $k$-group scheme that is finite as a $k$-scheme and that the category of finite commutative group schemes over a field $k$ is abelian. The \textit{order}, denoted $o(G)$, of a finite $k$-group scheme $G=\spec(A)$ is the dimension of $A$ as a $k$-vector space. 

\begin{lemma}
    Let $G=\spec(A)$ be a finite (commutative) $k$-group scheme. Then the dual of $A$ as a $k$-vector space $$A^\vee=\Hom_k(A,k)$$ is a finite dimensional (commutative) $k$-Hopf algebra.
\end{lemma}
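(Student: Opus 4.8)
The plan is to equip $A^\vee$ with the Hopf algebra structure dual to that of $A$, exploiting that $\Hom_k(-,k)$ is a contravariant monoidal equivalence on finite-dimensional $k$-vector spaces. Since $G=\spec(A)$ is finite, $A$ is a finite-dimensional $k$-algebra, so $A^\vee$ is finite-dimensional with $\dim_k A^\vee=\dim_k A$. The essential linear-algebra input is the canonical isomorphism $(A\otimes_k A)^\vee\cong A^\vee\otimes_k A^\vee$, which holds precisely because $A$ is finite-dimensional; under it the functor $(-)^\vee$ becomes monoidal, and this is exactly what allows the dual of the multiplication to land in $A^\vee\otimes_k A^\vee$ and thereby define a comultiplication.

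Concretely, writing $m$, $u$, $\Delta$, $\varepsilon$, $S$ for the multiplication, unit, comultiplication, counit and antipode of $A$, I would define on $A^\vee$ the multiplication $\Delta^\vee\colon A^\vee\otimes_k A^\vee\cong(A\otimes_k A)^\vee\to A^\vee$, the unit $\varepsilon^\vee\colon k\cong k^\vee\to A^\vee$, the comultiplication $m^\vee\colon A^\vee\to(A\otimes_k A)^\vee\cong A^\vee\otimes_k A^\vee$, the counit $u^\vee\colon A^\vee\to k^\vee\cong k$, and the antipode $S^\vee$. I would then verify the Hopf algebra axioms for $A^\vee$ by dualizing the commutative diagrams encoding them for $A$: each axiom is self-dual under $(-)^\vee$. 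The associativity of $\Delta^\vee$ is the dual of the coassociativity of $\Delta$; the coassociativity of $m^\vee$ is the dual of the associativity of $m$; the unit and counit compatibilities exchange roles; the bialgebra compatibility (that $\Delta$ is an algebra homomorphism) dualizes to the corresponding compatibility for $A^\vee$; and the antipode identity $m\circ(S\otimes\id)\circ\Delta=u\circ\varepsilon$ dualizes to $\Delta^\vee\circ(S^\vee\otimes\id)\circ m^\vee=\varepsilon^\vee\circ u^\vee$, i.e. the antipode identity for $A^\vee$. Since $(-)^\vee$ is exact on finite-dimensional spaces and monoidal, applying it to each such diagram produces exactly the dual diagram, with no further computation required.

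For the parenthetical commutative case, I would use that $G$ is commutative if and only if $A$ is cocommutative, i.e. $\Delta$ is unchanged after composing with the symmetry isomorphism of $A\otimes_k A$; dualizing shows that the multiplication $\Delta^\vee$ of $A^\vee$ is likewise unchanged after the symmetry, so $A^\vee$ is a commutative algebra. The only step that genuinely requires finiteness—and hence the main point to get right—is the monoidality isomorphism $(A\otimes_k A)^\vee\cong A^\vee\otimes_k A^\vee$: without it the dual $m^\vee$ would merely land in $(A\otimes_k A)^\vee$ and no comultiplication on $A^\vee$ would exist. Once this isomorphism and its naturality are established, everything else is formal diagram-dualization.
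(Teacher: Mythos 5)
Your proposal is correct and follows the standard Cartier-duality argument; the paper gives no argument of its own here, simply citing Demazure--Gabriel (V.\textsection 1, 2.10), which contains essentially the construction you describe. You correctly isolate the one point where finiteness is indispensable, namely the monoidality isomorphism $(A\otimes_k A)^\vee\cong A^\vee\otimes_k A^\vee$, and the rest is formal dualization of diagrams, including the observation that cocommutativity of $A$ (i.e.\ commutativity of $G$) dualizes to commutativity of the algebra $A^\vee$.
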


\begin{proof}
    \cite[II.\textsection1, 2.10]{DG}
\end{proof}

\begin{defi}[Cartier dual] Let $G$ be a finite commutative $k$-group scheme.
    We call \textit{Cartier dual} of $G$ the finite commutative $k$-group scheme $$G^\vee=\spec(A^\vee).$$
\end{defi}

\begin{rmk}
    When $G$ is a finite commutative $k$-group scheme, the Verschiebung morphism $V_G\colon G^{(p)}\rightarrow G$ coincides with the dual of the Frobenius morphism of the Cartier dual $G^\vee$, $F_{G^\vee}\colon (G^\vee)^{(p)}\simeq(G^{(p)})^\vee\rightarrow {G^\vee}$ (see \cite[IV.\textsection3, 4.9]{DG}).
\end{rmk}

\subsection{The socle of a finite group scheme}

In the context of group theory, the \textit{socle} of a finite abstract group $G$ is the subgroup generated by the non-trivial minimal normal subgroups of $G$. We introduce here a generalization to finite group schemes of this classical definition; for this idea we are thankful to Michel Brion. 

\begin{defi}[Socle]
    For $G$ a finite $k$-group scheme, we define the \textit{socle} of $G$, denoted $\soc(G)$, to be the $k$-subgroup scheme generated by the non-trivial minimal normal $k$-subgroup schemes of $G$.
\end{defi}

The following Lemma describes some properties of the socle of a finite group scheme.

\begin{lemma}\label{soc1}
   Let $G$ be a finite $k$-group scheme.
    \begin{enumerate}
        \item $G$ is non-trivial if and only if $\soc(G)$ is non-trivial.
        \item $\soc(G)$ is a normal $k$-subgroup scheme of $G$.
        \item $\soc(G)\times_GH$ is non-trivial for any non-trivial normal $k$-subgroup scheme $H$ of $G$.
        \item If $G$ is commutative, then $\soc(H)=\soc(G)\times_GH$ for any $k$-subgroup scheme $H\subseteq G$, in particular $\soc(\soc(G))=\soc(G)$. 
        \item If $G$ is infinitesimal, then $\soc(G)\subseteq \soc(\ker(F_G))$. If in addition $G$ is commutative, then $\soc(G)=\soc(\ker(F_G))$.
        \item If $G_1$ and $G_2$ are finite commutative $k$-group schemes, then $$\soc(G_1\times_kG_2)=\soc(G_1)\times_k\soc(G_2).$$
        \item For any morphism of finite commutative $k$-group schemes $G_1\rightarrow G_2$, the induced morphism $\soc(G_1)\rightarrow G_2$ factors via $\soc(G_2)$.
    \end{enumerate}
\end{lemma}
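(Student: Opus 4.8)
The plan is to treat the three non-commutative statements (1)--(3) first, then deduce the commutative statements (4), (6), (7) from the general theory of the socle in the finite-length abelian category of finite commutative $k$-group schemes, and finally prove (5) using (3) and (4). For (1), since $G$ is finite every strictly descending chain of closed subgroup schemes has strictly decreasing order, so the set of non-trivial normal closed subgroup schemes satisfies the descending chain condition; if $G$ is non-trivial it is itself such a subgroup, hence a minimal one exists and $\soc(G)$ is non-trivial, while if $G$ is trivial there are none and $\soc(G)$ is trivial. For (2), the subgroup generated by a family of normal closed subgroup schemes is preserved by every inner automorphism of $G$ (each generator is, being normal), hence is itself normal. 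For (3), given a non-trivial normal $H\trianglelefteq G$, I would choose by the descending chain condition a minimal element $N$ among the non-trivial normal closed subgroup schemes of $G$ contained in $H$; minimality forces $N$ to be a minimal normal subgroup of $G$, so $N\subseteq\soc(G)\cap H=\soc(G)\times_GH$, which is therefore non-trivial.

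For the commutative statements I would exploit that, when $G$ is commutative, every closed subgroup scheme is normal, so ``minimal normal'' means ``simple'' and $\soc(G)$ is exactly the sum of all simple subobjects, i.e. the socle in the abelian category of finite commutative $k$-group schemes. Every object here has finite length (its order bounds the length), so the standard socle formalism applies: $\soc(G)$ is semisimple, subobjects and homomorphic images of semisimple objects are semisimple, and a semisimple subobject of any object lies in its socle. Statement (7) then follows because for each simple summand $S$ of $\soc(G_1)$ the image $S/(S\cap\ker\phi)$ is simple or trivial, hence contained in $\soc(G_2)$, so $\phi(\soc(G_1))=\sum_S\phi(S)\subseteq\soc(G_2)$. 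For (4) the inclusion $\soc(H)\subseteq\soc(G)\cap H$ holds because a simple subgroup of $H$ is a simple (hence minimal) subgroup of $G$; conversely $\soc(G)\cap H$ is a subobject of the semisimple object $\soc(G)$, hence semisimple and contained in $H$, so it lies in $\soc(H)$ --- and taking $H=\soc(G)$ gives $\soc(\soc(G))=\soc(G)$. Statement (6) I would deduce from (7): the closed immersions $G_i\hookrightarrow G_1\times_kG_2$ send $\soc(G_i)$ into $\soc(G_1\times_kG_2)$, giving the inclusion $\supseteq$, while the two projections send $\soc(G_1\times_kG_2)$ into $\soc(G_1)$ and $\soc(G_2)$ respectively, giving $\subseteq$.

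Finally, for (5), let $N$ be a minimal normal subgroup of an infinitesimal $G$. Since Frobenius is compatible with the closed immersion $N\hookrightarrow G$, one has $N\cap\ker(F_G)=\ker(F_N)$; this is non-trivial, for if it were trivial then $F_N$ would be a monomorphism between finite group schemes of the same order $|N^{(p)}|=|N|$, hence an isomorphism, contradicting that $N$, being infinitesimal, satisfies $F_N^m=0$ for some $m\geq1$. As an intersection of normal subgroups it is normal in $G$, so minimality yields $N=\ker(F_N)\subseteq\ker(F_G)$. Moreover $\soc(\ker(F_G))$ is preserved by every automorphism of $\ker(F_G)$ (such an automorphism permutes minimal normal subgroups), in particular by the conjugation action of $G$ on the normal subgroup $\ker(F_G)$, so $\soc(\ker(F_G))$ is normal in $G$; then $N\cap\soc(\ker(F_G))$ is normal in $G$ and non-trivial by (3) applied inside $\ker(F_G)$, whence $N\subseteq\soc(\ker(F_G))$ by minimality. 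This gives $\soc(G)\subseteq\soc(\ker(F_G))$. When $G$ is commutative, (4) with $H=\ker(F_G)$ gives $\soc(\ker(F_G))=\soc(G)\cap\ker(F_G)=\soc(G)$, using $\soc(G)\subseteq\ker(F_G)$ from the previous step.

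The main obstacle I anticipate is (5): making precise that a minimal normal subgroup of an infinitesimal group scheme necessarily has height one (so that it lands in $\ker(F_G)$), and the scheme-theoretic justification that $\soc(\ker(F_G))$, being stable under all automorphisms and hence under the conjugation action of $G$, is normal in $G$. A secondary point to pin down is the invocation of the abelian-category socle formalism --- in particular that a subobject of a semisimple finite commutative group scheme is again semisimple --- which underlies (4), (6) and (7).
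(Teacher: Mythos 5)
Your treatment of (1)--(3) is essentially identical to the paper's: finiteness yields minimal non-trivial normal closed subgroup schemes, the socle is normal because it is generated by normal subgroups, and for (3) one replaces $H$ by a minimal normal subgroup of $G$ contained in it. Where you genuinely diverge is in the commutative statements (4), (6), (7). The paper argues ad hoc with minimal subgroups: for (4) it notes that every minimal subgroup of $H$ is minimal in $G$, observes that every minimal subgroup of $G$ either lies in $H$ or meets it trivially, and then declares the equality; for (7) it says images of minimal subgroups are minimal. Your route through the socle formalism of the finite-length abelian category of finite commutative $k$-group schemes (the socle is the sum of all simple subobjects, is semisimple, and subobjects and images of semisimple objects are semisimple) is more systematic, and it actually buys rigor: the inclusion $\soc(G)\times_GH\subseteq\soc(H)$ in (4) does not follow literally from the paper's dichotomy, since a simple subgroup of $\soc(G)\times_GH$ need not be one of the minimal subgroups generating $\soc(G)$; what closes this is precisely your observation that a subobject of a semisimple object is again a sum of simples. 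Your derivation of (6) from (7) via the two immersions and the two projections, and your remark in (7) that the image of a simple object may also be trivial, are likewise cleaner than the paper's versions.

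On (5), both you and the paper first show that a minimal normal $N\subseteq G$ lies in $\ker(F_G)$; your order argument for the non-triviality of $\ker(F_N)$ is a correct expansion of what the paper merely asserts. The paper then concludes $\soc(G)\subseteq\soc(\ker(F_G))$ outright, which tacitly uses that such an $N$ --- normal, but a priori not minimal normal, in $\ker(F_G)$ --- is contained in $\soc(\ker(F_G))$. You correctly isolate this as a step needing proof and propose the classical remedy: $\soc(\ker(F_G))$ is characteristic in $\ker(F_G)$, hence normal in $G$, so $N\times_G\soc(\ker(F_G))$ is a non-trivial (by (3)) normal subgroup of $G$ contained in $N$, and minimality finishes. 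The group-theoretic idea is right, but, as you yourself flag, its scheme-theoretic justification remains open in your write-up: normality in $G$ must be tested against conjugation by $S$-points for arbitrary $k$-schemes $S$, and these only give automorphisms of $\ker(F_G)_S$ over $S$; to conclude that they preserve $\soc(\ker(F_G))_S$ one would need the socle to be stable under (or compatible with) base change, which is not established --- the paper itself proves such compatibility only for commutative trigonalizable groups over a perfect field (Lemma \ref{diag}). So the non-commutative half of (5) is genuinely incomplete in your proposal; note, however, that the paper's own proof silently skips this very step, and that in the commutative case (the only one used later in the paper, and where minimality in $G$ does pass to minimality in $\ker(F_G)$ because all subgroups are normal) both your argument via (4) and the paper's are complete.
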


\begin{proof}\leavevmode
    \begin{enumerate}
     \item Since $G$ is normal in itself and it is finite, there exist non-trivial minimal normal subgroup schemes. 
     \item  Clear by definition since the socle is generated by non-trivial normal subgroup schemes.
     \item Since $G$ is finite we may suppose that $H$ is minimal, hence $\soc(G)\times_GH=H$.
    \item Notice that since $G$ is commutative any of its subgroup schemes is normal. A non-trivial minimal $k$-subgroup scheme of $H$ is also a minimal $k$-subgroup scheme of $G$. Therefore $\soc(H)\subseteq \soc(G)\times_GH$. Let $N$ be a non-trivial minimal $k$-subgroup scheme of $G$. By minimality, either $N$ is a $k$-subgroup scheme of $H$ or $N\times_GH$ is trivial. Suppose the former. Then $N$ is also a minimal $k$-subgroup scheme of $H$. Therefore the equality. 
    \item Let $N$ be a non-trivial minimal normal $k$-subgroup scheme of $G$. Since $G$ is infinitesimal, then $N\times_G\ker(F_G)$ is a non-trivial normal $k$-subgroup scheme of $G$. Therefore, by minimality, $N$ is a $k$-subgroup scheme of $\ker(F_G)$. Hence $\soc(G)\subseteq \soc(\ker(F_G))$. If $G$ is commutative, by the previous point also the other inclusion holds. 
    \item Clearly $\soc(G_1\times_kG_2)$ is contained in $\soc(G_1)\times_k\soc(G_2).$ Take now $N_1\times_kN_2$ with $N_i$ non-trivial minimal $k$-subgroup scheme of $G_i$. Then $N_i$ is also a minimal $k$-subgroup scheme of $G_1\times_kG_2$ (notice that again we are using the assumption that the $G_i$'s are commutative). Therefore, by definition of the socle subgroup scheme, $N_1\times_kN_2\subseteq \soc(G_1\times_kG_2)$ and thus also the inverse inclusion holds true.
    \item Let $N$ be a non-trivial minimal $k$-subgroup scheme of $G_1$, then $N$ is mapped to a minimal $k$-subgroup scheme of $G_2$.
    \end{enumerate}
\end{proof}

\subsection{Trigonalizable group schemes}

\begin{defi}[Unipotent group scheme]
    A $k$-algebraic group $G$ is said to be \textit{unipotent} if it is isomorphic to an algebraic subgroup of the $k$-algebraic group of upper triangular unipotent matrices $U_n$ for some $n\geq1$.
\end{defi}

The group scheme of Witt vectors $W$ over a perfect field $k$ of positive characteristic $p$ plays a central role in the study of unipotent commutative $k$-group schemes. A reference for this is \cite[V.\textsection 1 and \textsection 4]{DG}. 
We denote by $W_n$ the $k$-group scheme of Witt vectors of length $\leq n$ and by $W_n^m$ the kernel of the morphism $F^m\colon W_n\rightarrow W_n$. Notice that if we want to consider $r$ copies of $W_n$ we will use the notation $(W_n)^r$ with the parenthesis. Recall that $W_n^m$ is the Cartier dual of $W_m^n$ for every $n,m\geq1$ (see \cite[III.\textsection4]{Demazure}).

\begin{prop}\label{infwitt}
If $k$ is perfect, then every infinitesimal commutative unipotent $k$-group scheme $G$ can be embedded in $\left(W_n^m\right)^r$ for some $n,m,r\geq1$.
\end{prop}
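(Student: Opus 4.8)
The plan is to produce the embedding in two moves: first embed $G$ into a product of full truncated Witt vector groups, and then cut this embedding down to the Frobenius kernels $W_n^m$. The two ingredients are the structure theory of commutative unipotent groups over a perfect field and the functoriality of the relative Frobenius recorded in the remark following the definition of $F_X$.

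\textbf{Step 1 (embedding into Witt vectors).} Since $k$ is perfect and $G$ is commutative unipotent, I would invoke the classical result that $G$ admits a closed immersion $\iota\colon G\hookrightarrow \prod_{i=1}^{r} W_{n_i}$ into a finite product of truncated Witt vector groups (see \cite[V.\textsection1]{DG}). Using the standard closed immersions $W_{n_i}\hookrightarrow W_n$ for $n_i\leq n:=\max_i n_i$ (the Verschiebung shift maps), I may assume all factors equal and obtain a closed immersion $G\hookrightarrow (W_n)^r$. This is exactly the step where perfectness is essential: over non-perfect fields wound unipotent groups obstruct such an embedding, which is why the hypothesis enters here and nowhere else.

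\textbf{Step 2 (cutting down by the Frobenius kernel).} Since $G$ is infinitesimal, $F_G^m=0$, where $m$ denotes the height of $G$. Because $\iota$ is a homomorphism and the relative Frobenius is functorial and compatible with fibre products, we get $F_{(W_n)^r}^m\circ\iota=\iota^{(p^m)}\circ F_G^m=0$, so $\iota$ factors through
\[
\ker\!\left(F^m\colon (W_n)^r\to \big((W_n)^r\big)^{(p^m)}\right)=\left(\ker(F^m\colon W_n\to W_n^{(p^m)})\right)^r=(W_n^m)^r,
\]
and the induced map $G\hookrightarrow (W_n^m)^r$ is still a closed immersion. This yields the statement with these $n,m,r\geq 1$.

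The main obstacle is Step 1, i.e. exhibiting the embedding into a product of Witt groups, since this is precisely the part that fails without perfectness and must be imported from the structure theory. As a cross-check and an alternative route, one can argue contravariantly: both $G$ and its Cartier dual are infinitesimal commutative unipotent, and via the contravariant exact Dieudonné correspondence over $k$ the embedding $G\hookrightarrow (W_n^m)^r$ is equivalent to a surjection of $D$-modules $M(W_n^m)^{\oplus r}\twoheadrightarrow M(G)$, where $D=W(k)[F,V]$. As $M(G)$ has finite length with $F$ and $V$ both nilpotent, choosing $m,n$ with $F^mM(G)=V^nM(G)=0$ makes $M(G)$ a finitely generated module over the rectangular quotient $D/(DF^m+DV^n)\cong M(W_n^m)$, so such a surjection exists with $r$ the number of generators. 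The delicate point in this second approach is matching indices so that the rectangle is the Dieudonné module of $W_n^m$ and not of $W_m^n$, which is consistent with $W_n^m=(W_m^n)^\vee$ and the interchange of $F$ and $V$ under Cartier duality.
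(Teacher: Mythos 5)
Your proof is correct, but it takes a genuinely different (and more self-contained) route than the paper, whose entire proof is a citation of \cite[V.\textsection1, Proposition 2.5]{DG} --- essentially the statement itself. You instead import only the weaker classical ingredient, namely that over a perfect field a commutative unipotent algebraic group admits a closed immersion into a finite product of truncated Witt groups, and supply the remaining work yourself: padding the factors to a common length via the Verschiebung-shift closed immersions $W_{n_i}\hookrightarrow W_n$, then cutting down to the Frobenius kernel using $F_G^m=0$ and naturality of the relative Frobenius, so that $\iota$ factors through $\ker\left(F^m\colon (W_n)^r\to \left((W_n)^r\right)^{(p^m)}\right)=(W_n^m)^r$, the factorization of a closed immersion through a closed subscheme being again a closed immersion. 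Both steps are sound, and your argument has the virtue of making visible exactly where perfectness enters (Step 1) and where infinitesimality enters (Step 2), which the paper's citation hides; your Dieudonn\'e-theoretic sketch is a valid independent confirmation as well, and you correctly flag the only delicate point there, the $F$/$V$ index matching coming from $W_n^m=(W_m^n)^\vee$. Two minor quibbles, neither affecting correctness: the aside attributing the failure over imperfect fields to wound unipotent groups is off target for finite infinitesimal $G$ (wound groups are smooth, connected and positive-dimensional, so they are not the relevant obstruction here), and since $W_n$ is defined over $\F_p$ one has $W_n^{(p^m)}\simeq W_n$, which is the identification needed for your kernel to agree with the paper's convention $W_n^m=\ker(F^m\colon W_n\to W_n)$.
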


\begin{proof}
   See \cite[V.\textsection1, Proposition 2.5]{DG}.
\end{proof}

\begin{rmk}\label{FVnilp}
A finite commutative $k$-group scheme is infinitesimal unipotent if and only if its Frobenius and Verschiebung morphisms are both nilpotent (see \cite[IV.\textsection3, 5.3]{DG}). In particular, in Proposition \ref{infwitt} one can take $m$ and $n$ to be respectively their nilpotency indices (this is a direct consequence of the functoriality of the Frobenius and Verschiebung morphism). %Pink lo fa Lemma 22.1
\end{rmk}

\begin{defi}[Diagonalizable group scheme/of multiplicative type]
    An affine $k$-group scheme $G$ is said to be \textit{diagonalizable} if it is represented by the group-algebra $k[M]$ for some abstract abelian group $M$, where the $k$-Hopf algebra structure is given by $\Delta\colon m\mapsto m\otimes m$ and $\varepsilon\colon m\mapsto1$ for every $m\in M$. It is said to be of \textit{multiplicative type}, if $G_{k^{sep}}$ is diagonalizable for some separable closure $k^{sep}$ of $k$.
\end{defi}

\begin{rmk}
    Notice that every diagonalizable group scheme is a finite product of copies of $\G_m$ and various $\mu_n$ (see \cite[Chapter 2.2]{Waterhouse}), where the latter is the subgroup scheme of $\G_m$ of $n$th roots of unity. 
\end{rmk}

\begin{defi}[Trigonalizable group scheme]
    A $k$-group scheme $G$ is said to be \textit{trigonalizable} if it is affine and it has a closed normal unipotent subgroup $G^u$ such that $G/G^u$ is diagonalizable (see for example \cite[IV.\textsection2, Definition 3.1]{DG}).
\end{defi}

Let us recall the Theorem of decomposition of commutative affine $k$-group schemes.

\begin{thm}\label{affinecommgrpschms}
    Let $G$ be a commutative affine $k$-group scheme. Then: 
    \begin{enumerate}[label=(\roman*)]
        \item $G$ has a maximal $k$-subgroup scheme $G^m$ of multiplicative type and $G/G^m$ is unipotent;
        \item if $k$ is perfect, $G$ has a maximal unipotent $k$-subgroup scheme $G^u$ and $G\simeq G^u\times_kG^m$. In particular, $G$ is trigonalizable if and only if $G^m$ is diagonalizable.
    \end{enumerate}
\end{thm}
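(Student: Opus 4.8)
The plan is to prove the two parts separately, and in both cases the strategy is to reduce the commutative affine group scheme $G$ to its building blocks via the infinitesimal/étale and multiplicative/unipotent dichotomies, then assemble the pieces. For part (i), I would first recall that over any field a finite (or affine) commutative group scheme admits a maximal subgroup of multiplicative type, which is characterized by being the largest subgroup on which the action of the dual unipotent part is trivial; concretely, $G^m$ can be defined as the intersection of the kernels of all Frobenius-type quotients that kill the multiplicative part, or equivalently as the subgroup whose Cartier dual is the largest unipotent quotient of $G^\vee$. I would verify that $G^m$ is normal — which is automatic since $G$ is commutative — and that the quotient $G/G^m$ contains no nontrivial subgroup of multiplicative type, hence is unipotent. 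This last implication is the crux: one must show that killing the maximal multiplicative subgroup leaves a unipotent quotient, and I would argue this by passing to the Cartier dual (in the finite case) where multiplicative and unipotent are interchanged by duality, so that $(G/G^m)^\vee$ embeds into $G^\vee$ as the kernel of the projection onto the multiplicative-dual part.

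For part (ii), the extra hypothesis that $k$ is perfect is what makes the splitting $G \simeq G^u \times_k G^m$ possible, and I would construct $G^u$ as the preimage-theoretic complement to $G^m$. The clean approach is to build $G^u$ as the image (or kernel) of a natural idempotent-type endomorphism: over a perfect field the connected–étale sequence splits, and more generally the decomposition into multiplicative and unipotent parts is functorial. I would show that the composite $G^u \hookrightarrow G \twoheadrightarrow G/G^m$ is an isomorphism, which gives a section of $G \twoheadrightarrow G/G^m$ and hence the product decomposition; here $G^u$ is taken to be the maximal unipotent subgroup, so one needs $G^u \cap G^m$ trivial (a subgroup that is both unipotent and of multiplicative type must be trivial) and $G^u \cdot G^m = G$ (a counting/dimension or faithful-flatness argument, using that the quotient $G/G^m$ is unipotent and lifts).

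I expect the main obstacle to be establishing the existence of the complement $G^u$ and the splitting over the perfect field, rather than the mere existence of $G^m$. The existence of $G^m$ is essentially formal (it is the largest subobject of multiplicative type, and such a largest subobject exists by Noetherianity of the subgroup lattice of a finite, or affine algebraic, group scheme). By contrast, the splitting requires perfectness in an essential way: over a non-perfect field unipotent and multiplicative parts need not split off as a direct factor. I would therefore lean on the structure theory for affine commutative group schemes — the decomposition of a commutative affine group scheme into its infinitesimal, étale, multiplicative, and unipotent constituents over a perfect field — and cite \cite[IV.\textsection3]{DG} for the functorial splitting, reducing the verification to the compatibility of these decompositions under the multiplicative/unipotent filtration. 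The final clause, that $G^m$ is diagonalizable when $G$ is trigonalizable, then follows because over the decomposition $G \simeq G^u \times_k G^m$ the quotient $G/G^u \simeq G^m$ is by definition diagonalizable in the trigonalizable setting.
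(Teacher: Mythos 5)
For context: the paper's own ``proof'' of this theorem is a bare citation to \cite[IV.\textsection3, Theorem 1.1]{DG}, and your proposal ultimately defers to the same reference for the splitting in (ii), so that part of your plan coincides with what the paper does. The problem lies in the one step you actually try to argue, which you yourself identify as the crux of (i).

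Your duality dictionary is wrong: Cartier duality does \emph{not} interchange multiplicative type and unipotent. For finite commutative group schemes in characteristic $p$ the correct statements are that $G$ is of multiplicative type if and only if $G^\vee$ is \'etale, and $G$ is unipotent if and only if $G^\vee$ is connected (infinitesimal). The group scheme $\alpha_p$ is the basic counterexample to your claim: it is unipotent and self-dual, so its dual is again unipotent rather than of multiplicative type; likewise $\Z/\ell\Z$ with $\ell\neq p$ is of multiplicative type (it is a form of $\mu_\ell$) and its Cartier dual $\mu_\ell$ is again of multiplicative type, not unipotent. In particular your description of $G^m$ as ``the subgroup whose Cartier dual is the largest unipotent quotient of $G^\vee$'' returns the wrong group: for $G=\Z/\ell\Z$ it gives the trivial subgroup, whereas $G^m=G$. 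The repairable finite-case argument is to dualize the connected--\'etale sequence of $G^\vee$: setting $G^m$ equal to the Cartier dual of $\pi_0(G^\vee)$, one gets $G/G^m\simeq\left((G^\vee)^0\right)^\vee$, which is unipotent precisely because its dual is connected. Even after this repair a second gap remains: the theorem concerns arbitrary commutative \emph{affine algebraic} group schemes ($\G_a$, tori, Witt vector groups, and extensions thereof), not just finite ones, and for these the Cartier duality you invoke is not available. The existence of $G^m$ and the unipotence of $G/G^m$ in that generality is exactly the content of \cite[IV.\textsection3, Theorem 1.1]{DG}, so your sketch cannot avoid invoking that structure theory for (i), just as you already do for (ii).
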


\begin{proof}
    See \cite[IV.\textsection3, Theorem 1.1]{DG}.
\end{proof}

Before giving the following definitions, let us recall that $\End(\alpha_p)=k$ and $\End(\mu_p)=\F_p$ (see \cite[Chapter 2.d, 2.21, 2.22]{Milne}), where $\alpha_p$ and $\mu_p$ denote respectively the Frobenius kernel of the additive group scheme $\G_a$ and of the multiplicative group scheme $\G_m$. As a consequence, we have that $\Hom(\alpha_p,G)$ and $\Hom(\mu_p,G)$ have, for any $k$-group scheme $G$, a natural structure respectively of $k$-vector space and of $\F_p$-vector space. Here $\End$ and $\Hom$ refer to endomorphisms and homomorphisms of group schemes.

%nella def successiva non potrei prendere $G$ qualsiasi schema in gruppo?

\begin{defi}[$a$-number]
    Let $k$ be perfect and $G$ be a commutative affine $k$-group scheme. The \textit{$a$-number} of $G$ is $\dim_k(\Hom(\alpha_p,G))$.
\end{defi}

\begin{rmk}\label{a-number}
    Notice that the $a$-number of $G$ coincides also with the maximal natural number $r$ such that $G$ contains a $k$-subgroup scheme isomorphic to $\alpha_p^r$. In addition to that, the $a$-number of $G$ is zero if and only if $G$ is of multiplicative type, since any non-trivial unipotent group scheme contains a $k$-subgroup scheme isomorphic to $\alpha_p$.
\end{rmk}

\begin{defi}[$p$-rank]
    Let $k$ be perfect and $G$ be a commutative trigonalizable $k$-group scheme. The \textit{$p$-rank} of $G$ is $\dim_{\F_p}(\Hom(\mu_p,G))$.
\end{defi}

\begin{rmk}
    Notice that the $p$-rank of $G$ coincides also with the maximal natural number $n$ such that $G$ contains a $k$-subgroup scheme isomorphic to $\mu_p^n$. In addition to that, the $p$-rank of $G$ is zero if and only if $G$ is unipotent.
\end{rmk}

\begin{lemma}\label{kerF}\label{goingdown}
    If $k$ is perfect and $G$ is an infinitesimal commutative unipotent $k$-group scheme, then the following are equivalent:
    \begin{enumerate}[label=(\roman*)]
        \item $r$ is the $a$-number of $G$;
        \item $r$ is the minimal natural number such that for any closed immersion $G\subseteq(W_n^m)^s$ there exists a projection $(W_n^m)^s\twoheadrightarrow(W_n^m)^r$, which forgets $s-r$ copies of $W_n^m$, inducing an immersion of $G$ in $(W_n^m)^r$.
    \end{enumerate}
    Moreover the following facts hold true:
    \begin{enumerate}[label=(\alph*)]
    \item $\soc(G)$ is isomorphic to $\alpha_p^r$ and it is the maximal $k$-subgroup scheme of $\ker(F_G)$ with trivial Verschiebung; %\textcolor{bibi}{\textbf{questo e' vero senza l'ipotesi di $k$ perfetto}}
        \item $r\leq \min\left(\dim_k(\Lie(G)),\dim_k(\Lie(G^\vee)\right)$ and $\dim_k(\Lie (G))=r$ if and only if $\ker(F_G)$ is isomorphic to $\alpha_p^r.$
    \end{enumerate}
    
\end{lemma}

\begin{proof}\leavevmode
\begin{enumerate}
    \item[$(i)\Rightarrow(ii)$] 
Let $r$ be the $a$-number of $G$, that is $r$ is the maximal natural number $r$ such that $G$ contains a $k$-subgroup scheme $H$ isomorphic to $\alpha_p^r$. By Proposition \ref{infwitt} there exists an embedding $G\subseteq(W_n^m)^s$ for some $s\geq1$. Notice that since $H\subseteq G$ is annihilated both by the Frobenius and the Verschiebung, then $$\alpha_p^r\simeq H\subseteq \left(W_1^1\right)^s=\alpha_p^s$$ and thus $s\geq r$. If $s=r$ we are done. Suppose that $s>r$, then there exists a projection $(W_n^m)^s\twoheadrightarrow(W_n^m)^{s-1}$ forgetting a copy of $W_n^m$ which induces an immersion $G\hookrightarrow(W_n^m)^{s-1}.$ Indeed, suppose that all the projections $$\pi_i\colon G\rightarrow(W_n^m)^{s-1}$$ have non-trivial kernel $\ker(\pi_i)$. Then $\ker(\pi_i)$ is a non-trivial $k$-subgroup scheme of $G$ for every $i=1,\dots,s$ and thus it contains a $k$-subgroup scheme isomorphic $\alpha_p$. 
Since each $\ker(\pi_i)$ lies in a different copy of $W_n^m$, we therefore have $s$ linearly independent homomorphisms $\alpha_p\hookrightarrow G$, contradicting the fact that the $a$-number of $G$ is $r<s$.
%So for every $i=1,\dots,s$ we have a different copy of $\alpha_p$ contained in $G$, since each $\ker(\pi_i)$ lies in a different copy of $W_n^m$. Therefore $G$ contains a $k$-subgroup scheme isomorphic to $\alpha_p^s$, which contradicts the maximality of $r$. 
Now again, if $s-1=r$ we are done, otherwise we repeat the same reasoning until reaching $r$. Clearly $r$ is minimal for this property since $\alpha_p^{r}$ is not isomorphic to a $k$-subgroup scheme of $(W_n^m)^{r-1}$.
\item[$(ii)\Rightarrow(i)$] By minimality of $r$ all the projections $$\pi_i\colon G\rightarrow(W_n^m)^{r-1}$$ have non-trivial kernel $\ker(\pi_i)$. Then $\ker(\pi_i)$ is a non-trivial $k$-subgroup scheme of $G$ for every $i=1,\dots,r$ and thus it contains a $k$-subgroup scheme isomorphic to $\alpha_p$. Since each $\ker(\pi_i)$ lies in a different copy of $W_n^m$, we therefore have $r$ linearly independent homomorphisms $\alpha_p\hookrightarrow G$. Therefore $G$ contains a $k$-subgroup scheme isomorphic to $\alpha_p^r$ and clearly $r$ is maximal for this property since $\alpha_p^{r+1}$ is not isomorphic to a $k$-subgroup scheme of $(W_n^m)^{r}$.
\end{enumerate}

\begin{enumerate}[label=(\alph*)]
    \item Let $r$ be the $a$-number of $G$. Then there are $r$ linearly independent homomorphisms $\alpha_p\hookrightarrow G$ and each $\alpha_p$ is a minimal normal $k$-subgroup scheme of $G$. Therefore $\soc(G)=\soc(\ker(F_G))$ contains a $k$-subgroup scheme isomorphic to $\alpha_p^r.$ On the other hand, all the minimal normal subgroups of $\ker(F_G)$ are copies of $\alpha_p$ (see \cite[VI.\textsection2, Proposition 2.5]{DG}), thus $\soc(\ker(F_G))=\alpha_p^s$ for some $s\geq 1$. Combining the previous inclusion and the maximality of $r$ we obtain the equality. Suppose that $H$ is a $k$-subgroup scheme of $\ker(F_G)$ with trivial Verschiebung. Then $H\subseteq\G_a^{s'}$ for some ${s'}\geq1$ (see \cite[IV.\textsection3, Theorem 6.6]{DG}) and by the first point we can suppose that ${s'}$ is the maximal natural number such that $H$ contains a $k$-subgroup scheme isomorphic to $\alpha_p^{s'}$. Since $H\subseteq\ker(F_G)$, then $H=\ker(F_H)$. Moreover $\dim_k(\Lie (H))=s'$ and thus by order reasons we have $H=\ker(F_H)\simeq\alpha_p^{s'}$. By maximality of $r$, $H\subseteq \soc(G)\simeq\alpha_p^r$.
    \item Let $r$ be the $a$-number of $G$, then $\dim_k(\Lie(G))\geq r.$ By assumption $G$ contains a $k$-subgroup scheme $H$ isomorphic to $\alpha_p^r$. Dualizing, we obtain the faithfully flat homomorphism $$G^\vee\twoheadrightarrow H\simeq\alpha_p^{r}$$ and thus ${r}\leq \dim_k(\Lie(G^\vee))$. The inequality is given by \cite[Proposition 2.5]{BrochardMezard} which states that if we have a flat local morphism $A\rightarrow B$ of Noetherian local rings with maximal ideal and residue field respectively $m_A,m_B$ and $\kappa(A),\kappa(B)$, then $\dim_{\kappa(A)}\left(m_A/m_A^2\right)\leq \dim_{\kappa(B)}\left(m_B/m_B^2\right)$. For the last statement, clearly if $\ker(F_G)\simeq\alpha_p^r$ then $\dim_k(\Lie (G))=r.$ Now, by assumption we have $\alpha_p^r\simeq H\subseteq G$, so in particular $\alpha_p^r\simeq H\subseteq\ker(F_G)$, and if $\dim_k(\Lie (G))=r$ the equality must hold since in this case $H$ and $\ker(F_G)$ have both order $p^r$.
\end{enumerate} %For the second statement, if $r$ is minimal such that $G\subseteq (W_n^m)^r$, then by the first part we have that $\alpha_p^r\subseteq G$ and thus $\dim_k(\Lie(G))\geq r.$ Dualizing, we obtain the faithfully flat morphism $$G\twoheadrightarrow\alpha_p^{r}$$ and thus by \cite[Proposition 2.5]{BrochardMezard} ${r}\leq \dim_k(\Lie(G))$.
    %The first implication is given by Lemma \ref{goingdown}. For the other way around, by assumption $G\subseteq(w_n^m)^r$ and minimality of $r$ all the projections have non-trivial kernel $\ker(\pi_i)$. Then $\ker(\pi_i)$ is a non-trivial $k$-subgroup scheme of $G$ for every $i=1,\dots,r$ and thus it contains a copy of $\alpha_p$. So for every $i=1,\dots,r$ we have a different copy of $\alpha_p$ contained in $G$, since each $\ker(\pi_i)$ lies in a different copy of $W_n^m$. Therefore $\alpha_p^r\subseteq G$ and $r$ is maximal with respect to this inclusion since $\alpha_p^{r+1}\not\subseteq (W_n^m)^r$.
\end{proof}

\begin{lemma}\label{diag}
    Let $G$ be an infinitesimal commutative $k$-group scheme.
    \begin{enumerate}
        \item If $\ker(F_G)$ is diagonalizable, then $\soc(G)=\ker(F_G)=\mu_p^n$, where $n$ is the $p$-rank of $G$. 
       % \item If $G$ is unipotent, then $\soc(G)=\alpha_p^r$ where $r$ is the maximal natural number such that $\alpha_p^r\subseteq G$. In particular, $\soc(G)$ is the maximal $k$-subgroup scheme of $\ker(F_G)$ with trivial Verschiebung.
\end{enumerate}
         Moreover, if $k$ is perfect and $G$ is trigonalizable:
         \begin{enumerate}
        \item[2.] $\soc(G)\simeq\alpha_p^r\times_k\mu_p^n$, where $r$ is the $a$-number of $G$ and $n$ is the $p$-rank of $G$. In particular,  $$\soc(G)=\left(\ker(F_{G})\times_{G}\ker(V_{G^{(1/p)}})\right)\times_k\ker(F_{G/G^u}).$$
        \item[3.] $\soc(G)\times_kK= \soc(G_K)$ for any field extension $K/k$.
    \end{enumerate}
\end{lemma}

\begin{proof}\leavevmode
\begin{enumerate}
    \item By assumption $\ker(F_G)=\mu_{p}^n$ where $n$ is the maximal natural number such that $\mu_p^n\subseteq G$. Then, by Lemma \ref{soc1}, $\soc(G)=\soc(\ker(F_G))=\mu_{p}^n$.
%\item Let $r$ be the maximal natural number such that $\alpha_p^r\subseteq G$. Then $G$ contains $r$ distinct copies of $\alpha_p$, which are minimal normal $k$-subgroup schemes of $G$. Therefore $$\alpha_p^r\subseteq \soc(G)=\soc(\ker(F_G)).$$ On the other hand, all the minimal normal subgroups of $\ker(F_G)$ are copies of $\alpha_p$, henceforth $\soc(\ker(F_G))=\alpha_p^s$ for some $s\geq 1$. Combining the previous inclusion and the maximality of $r$ we obtain the equality. Suppose that $H$ is a $k$-subgroup scheme of $\ker(F_G)$ with trivial Verschiebung. Then $H\subseteq\G_a^{s'}$ for some ${s'}\geq1$ (see \cite[IV.\textsection3, Theorem 6.6]{DG}) and by the first point of Lemma \ref{goingdown} we can suppose that ${s'}$ is the maximal natural number such that $\alpha_p^{s'}\subseteq H$. Since $H\subseteq\ker(F_G)$, then $H=\ker(F_H)$. Moreover $\dim_k(\Lie (H))=s$ and thus by order reasons we have that $H=\ker(F_H)=\alpha_p^{s'}$. By maximality of $r$, $H\subseteq \soc(G)=\alpha_p^r$.
\item Since $k$ is perfect, then $G\simeq G^u\times_kG/G^u$ and by Lemma \ref{soc1} $$\soc(G)\simeq\soc(G^u)\times_k\soc(G/G^u).$$ Therefore the first part of the statement follows by 1. and Lemma \ref{goingdown}. We have already proved that $\soc(G/G^u)=\ker(F_{G/G^u})$. It is then enough to prove that $$\alpha_p^r\simeq\soc(G^u)=\ker(F_G)\times_G\ker(V_{G^{(1/p)}}).$$ The left to right inclusion is clear. By Lemma \ref{goingdown}, $G^u\subseteq(W_n^m)^r$. Hence, $$\ker(F_G)\times_G\ker(V_{G^{(1/p)}})\subseteq (W_n^1)^r\times_{(W_n^m)^r}(W_1^m)^r=(W_1^1)^r=\alpha_p^r.$$ The claimed equality then holds.
        \item If $K$ is perfect, the statement is a direct consequence of the compatibility of Frobenius and Verschiebung kernels with respect to base change. For the general case, clearly $\soc(G)\times_kK\subseteq \soc(G_K)$. Let $K^{perf}$ be the perfect closure of $K$. Then we have $$\soc(G)\times_kK^{perf}\hookrightarrow \soc(G_K)\times_KK^{perf}\hookrightarrow \soc(G_{K^{perf}})$$ and the first and last term coincide. Therefore $\soc(G)\times_kK^{perf}\simeq \soc(G_K)\times_KK^{perf}$ and so the inclusion $\soc(G)\times_kK\subseteq \soc(G_K)$ is in fact an equality.
\end{enumerate}
\end{proof}

\begin{rmk}
    Let $k$ be perfect, $G$ be a commutative trigonalizable $k$-group scheme and $G^u$ be its maximal unipotent $k$-subgroup scheme. The \textit{$a$-number} of $G$ coincides with the dimension of $\Lie (\soc(G^u)).$
\end{rmk}

\begin{example}\label{exasoc}\leavevmode
\begin{enumerate}
    \item $\soc\left(\left(W_n^m\right)^s\right)=\alpha_p^s$ for all $n,m,s\geq1$ and $\soc(G)=\alpha_p$ for any non-trivial $G\subseteq W_n^m$ and the $a$-numbers are respectively $s$ and $1$. 
    %\item If $G$ is a commutative unipotent $k$-group scheme of height one with Young diagram $\tau(G)$, then $\tau(\soc(G))$ corresponds to the first column of $\tau(G)$.
    \item Let $k$ be algebraically closed and $A$ be an %principally polarized \textcolor{bibi}{(?)} 
    abelian variety of dimension $g$ defined over $k$. The $p$-torsion $A[p]$ is a finite commutative $k$-group scheme
annihilated by $p$ with rank $p
^{2g}$. The \textit{$p$-rank} of $A$ is $$f=\dim_{\F_p}(\Hom(\mu_p,A[p])).$$ The \textit{$a$-number} of $A$ is $$a=\dim_k\left(\Hom(\alpha_p,A[p])\right).$$ Let $A[p]^0$ be the identity component of $A[p]$ and $A[p]^{0,u}$ its unipotent part. It is known that $$A[p]=A[p]^{0,u}\times_k\mu_p^f\times_k(\Z/p\Z)^f$$ (see for example \cite[III.15]{MumfordAbVar}). Then $\soc(A[p])=\alpha_p^a\times_k\mu_p^f\times(\Z/p\Z)^f$ and the $a$-number of $A$ coincides with $\dim_k\left(\Lie \left(\soc\left(A[p]^{0,u}\right)\right)\right)$ or equivalently it is the maximal natural number $a$ such that $A[p]$ contains a $k$-subgroup scheme isomorphic to $\alpha_p^a$, as pointed out already in Remark \ref{a-number}.
\end{enumerate}
\end{example}

\subsection*{Young diagrams for commutative unipotent group schemes of height one}\label{sectionyoung}

Let $k$ be perfect and $G$ be a commutative unipotent $k$-group scheme of height one, then $G\simeq\prod_{i=1}^sW_{n_i}^1$ for some $s,n_i\geq1$ (see \cite[IV.\textsection2, 2.14]{DG}). Moreover, we may suppose that $n_1\geq\dots\geq n_s$. We can then encode any such group scheme by a Young diagram $\tau(G)$, namely the one of shape $(n_1,\dots,n_s)$. For example $$\tau(\alpha_p)=\yng(1),\quad \tau(W_3^1\times_k\alpha_p)=\yng(3,1),\quad \tau(W_2^1\times_kW_2^1)=\yng(2,2).$$

The following lemma lists some straightforward properties, we thus omit the proof.

\begin{lemma}\label{younglemma}\leavevmode
    \begin{itemize}
        \item Two commutative unipotent $k$-group schemes of height one are isomorphic if and only if their Young diagrams coincide.
        \item The first column of $\tau(G)$ coincides with $\tau(\soc(G))$.
        \item The first $n$ columns represent $\tau(\ker(V_G^n))$ and the length of the $n$th column corresponds to the maximal $r$ such that $G$ contains a $k$-subgroup scheme isomorphic to $\left(W_n^1\right)^r$.
        \item The dimension of the Lie algebra of $G$ and $\log_p(o(G))$ both coincide with the number of boxes of $\tau(G)$.
    \end{itemize}
\end{lemma}

%Notice that two commutative unipotent $k$-group schemes of height one are isomorphic if and only if their Young diagrams coincide. Moreover, the first column of $\tau(G)$ represents $\soc(G)$. Similarly, the first $n$ columns represent $\ker(V_G^n)$ and the length of the $n$th column corresponds to the number of copies of $W_n^1$ contained in $G$. Finally, the dimension of the Lie algebra of $G$ coincides with the number of boxes of $\tau(G)$. 
Given $G_1,\dots,G_l$ commutative unipotent $k$-group schemes of height one, the smallest commutative unipotent $k$-group scheme $G$ of height one containing all of them corresponds to the smallest Young diagram containing $\tau(G_i)$ for all $i$. Explicitly, if $\tau(G_i)=(n_{1i},\dots,n_{s_ii})$ for some $s_i\geq 1$ and for $i=1,\dots,l$ then $\tau(G)=(n_1,\dots,n_s)$ where $s=\max\{s_1,\dots,s_l\}$ and $n_j=\max\{n_{j1},\dots,n_{jl}\}$ for every $j=1,\dots,s$. For example, if we take $G_1=W_3^1\times_k\alpha_p$ and $G_2=W_2^1\times_kW_2^1$, then $\tau(G)=\yng(3,2)$ and $G=W_3^1\times_kW_2^1$. 

\subsection*{Algebraic description of infinitesimal commutative unipotent group schemes}

In the last part of this section we give a description of the Hopf algebra of an infinitesimal commutative unipotent group scheme over a perfect field. This result will be crucial for the proof of Theorem \ref{mainthm}. Let $k$ be perfect and $G$ be a commutative unipotent $k$-algebraic group. Then $V_G^n=0$ for some nilpotency index $n\geq1$. We then have the cofiltration $$G=G/\Ima(V_G^n)\rightarrow G/\Ima(V_G^{n-1})\rightarrow\dots\rightarrow G/\Ima(V_G)\rightarrow0.$$ We call $G_i$ the $k$-group scheme $G/\Ima(V_G^i)$ and $H_i$ the kernel of the map $G_i\rightarrow G_{i-1}.$ Notice that then $H_i=\Ima(V_G^{i-1})/\Ima(V_G^i)$ and thus is killed by the Verschiebung. If $G$ is infinitesimal, then, by \cite[IV.\textsection3, Theorem 6.6]{DG}, $H_i\simeq\prod_{j=1}^{r_i}\alpha_{p^{l_{ij}}}$ for some $r_i\geq1$ and $l_{i1},\dots,l_{ir_i}\geq1.$ Moreover,  there are epimorphisms $H_{i}^{(p)}\rightarrow H_{i+1}$ which are induced by $V_G\colon \left(\Ima(V_G^{i-1})\right)^{(p)}\rightarrow \Ima(V_G^{i})$. In particular, the order and the dimension of the Lie algebra of the $H_i$'s are decreasing (the latter is given by \cite[Proposition 2.5]{BrochardMezard}, applied as explained at the end of the proof of Lemma \ref{goingdown}).

%\begin{prop}\label{structureicu} Let $k$ be perfect, $G$ be an infinitesimal commutative unipotent $k$-group scheme and $n$ be the nilpotency index of $V_G$. For all $i=1,\dots,n$ let $G_i=G/\mathrm{Im}(V_G^i)$, $H_i=\mathrm{Im}(V_G^{i-1})/\mathrm{Im}(V_G^i)$ and $r_i=\dim_k(\mathrm{Lie}(H_i))$. Then, for all $i=1,\dots,n$, there exist: \begin{enumerate}[label=(\roman*)] \item integers $l_{i1},\dots,l_{ir_i}\ge 1$ such that $H_i\simeq\prod_{j=1}^{r_i}\alpha_{p^{l_{ij}}}$, \item a $k$-group scheme structure on the product $\mathcal{G}_i:=G_{i-1}\times_k \mathbb{A}_k^{r_i}$ which realizes it as an extension \[ 0 \rightarrow \mathbb{G}_a^{r_i} \rightarrow \mathcal{G}_i \stackrel{\pi}{\rightarrow}G_{i-1} \to 0, \] where $\pi:\mathcal{G}_i\rightarrow G_{i-1}$ is the projection, \item an embedding $G_i\hookrightarrow \mathcal{G}_i$ fitting in an exact sequence \[ 0\to G_i \to \mathcal{G}_i \to \mathbb{G}_a^{r_i} \to 0. \] \end{enumerate} \end{prop}

\begin{prop}\label{structureicu} 
Let $k$ be perfect, $G$ be an infinitesimal commutative
unipotent $k$-group scheme and $n$ be the nilpotency index
of $V_G$. For all $i=1,\dots,n$ let $G_i=G/\mathrm{Im}(V_G^i)$,
$H_i=\mathrm{Im}(V_G^{i-1})/\mathrm{Im}(V_G^i)$ and
$r_i=\dim_k(\mathrm{Lie}(H_i))$. Then, for all $i=1,\dots,n$, there exist integers $l_{i1},\dots,l_{ir_i}\ge 1$, a $k$-group scheme $\mathcal{G}_i$ and a commutative diagram 

\begin{center}
    \begin{tikzcd}
    
        & 0\arrow[d] &0\arrow[d]&0 \arrow[d]\\
    
        0\arrow[r]&H_{i}\arrow[d]\arrow[r]&G_i\arrow[d]\arrow[r]&G_{i-1}\arrow[d,"\id"]\arrow[r]&0\\
        0\arrow[r]&\G_a^{r_i}\arrow[r]\arrow[d,"\phi"]&\mathcal{G}_i \arrow[d]\arrow[r]&G_{i-1}\arrow[r]\arrow[d]&0\\
        0\arrow[r]&\G_a^{r_i}\arrow[d]\arrow[r,"\id"]&\G_a^{r_i}\arrow[d]\arrow[r]&0\\
        & 0 &0
    \end{tikzcd}
    \end{center}
with exact rows and columns,
where $\phi= (F_{\G_a}^{l_{ij}})$. Moreover, the $k$-Hopf algebra of $\mathcal{G}_i$ is $$k[\mathcal{G}_i]=k[G_{i-1}][T_{i1},\dots,T_{ir_i}]$$ with comultiplication extending that of $k[G_{i-1}]$ and such that $$\Delta(T_{ij})=T_{ij}\otimes1+1\otimes T_{ij}+R_{ij}$$ where $R_{ij}$ is an element of $k[G_{i-1}]\otimes_kk[G_{i-1}]$.
\end{prop}

%\begin{prop}\label{structureicu}Let $k$ be perfect, $G$ be an infinitesimal commutative unipotent $k$-group scheme and $n$ be the nilpotency index of $V_G$. For all $i=1,\dots,n$, let $G_i=G/\Ima(V_G^i)$, $H_i=\Ima(V_G^{i-1})/\Ima(V_G^i)$, $r_i=\dim_k(\Lie (H_i))$ and $\mathcal{G}_i=G_{i-1}\times_k\A^{r_i}_k$. Then there exists a structure of $k$-group scheme on $\mathcal{G}_i$ such that $$0\rightarrow\G_a^{r_i}\rightarrow\mathcal{G}_i\rightarrow G_{i-1}\rightarrow0.$$ Moreover, $G_i$ is a $k$-subgroup scheme of $\mathcal{G}_i$ such that $\mathcal{G}_i/G_i\simeq\G_a^{r_i}$.
    %Then we may endow $k[G_{i-1}][T_1,\dots,T_r]$ with a structure of $k$-Hopf algebra such that $$\Delta(T_j)=T_j\otimes1+1\otimes T_j+R_j$$ where $R_j$ is an element of $k[G_{i-1}]\otimes_kk[G_{i-1}].$ Moreover,  $$k[G_i]=k[G_{i-1}][T_1,\dots,T_r]/(P_1,\dots,P_r)$$ where for every $j=1,\dots,r$ the polynomials $P_j$ are primitive elements of $k[G_{i-1}][T_1,\dots,T_r]$ congruent to $T_j^{p^{l_j}}$ for some $l_j\geq1$ modulo the augmentation ideal of $k[G_{i-1}]$. \end{prop}

\begin{proof}
    %We argue by induction on $i$: $G_1=G/\Ima(V_G)$ and thus $G_1\simeq\prod_{j=1}^{r_1}\alpha_{p^{l_{1j}}}$ for some $r_1,l_{1j}\geq1$. %and $$k[G_1]=k[T_1,\dots,T_r]/(T_1^{p^{l_1}},\dots,T_r^{p^{l_r}}).$$ 
    It is enough to prove the statement for $G=G_n$ and we set $r:=r_n$. By Proposition \ref{infwitt}, $G\subseteq\left(W_n\right)^s$ for some $s\geq1.$ We then have the following commutative diagram with vertical maps being closed immersions
    \begin{center}
    \begin{tikzcd}
        0\arrow[r]&H_{n}\arrow[d]\arrow[r]&G\arrow[d]\arrow[r]&G_{n-1}\arrow[d]\arrow[r]&0\\
        0\arrow[r]&\G_a^s\arrow[r]&\left(W_n\right)^s\arrow[r,"\pi"]&\left(W_{n-1}\right)^s\arrow[r]&0.
    \end{tikzcd}
    \end{center} Now, $$H_{n}=\Ima(V_G^{n-1})\simeq\prod_{j=1}^r\alpha_{p^{l_j}}\subseteq\G_a^r$$ for some $1\leq r\leq s$ and $l_1,\dots,l_r\geq1$, where the isomorphism is given by the fact that $H_n$ is an infinitesimal subgroup scheme of $\G_a^r$. By Lemma \ref{goingdown} there exists a projection $$\rho\colon (W_n)^s\twoheadrightarrow(W_n)^r$$ such that the composite $$H_n\hookrightarrow(W_n)^s\twoheadrightarrow(W_n)^r$$ is a monomorphism and thus $H_n\hookrightarrow\G_a^r$. Consider the commutative diagram given by the schematic images of $\rho$:
    \begin{center}
    \begin{tikzcd}
        0\arrow[r]&H_n\arrow[d]\arrow[r]&\rho(G)\arrow[d]\arrow[r]&\rho(G_{n-1})\arrow[d]\arrow[r]&0\\
        0\arrow[r]&\G_a^r\arrow[r]&\left(W_n\right)^r\arrow[r,"\pi"]&\left(W_{n-1}\right)^r\arrow[r]&0.
    \end{tikzcd}
    \end{center} Its vertical maps are closed immersions and they factor in the following way    \begin{equation}\label{exact sequence}
    \begin{tikzcd}
        0\arrow[r]&H_n\arrow[d]\arrow[r]&\rho(G)\arrow[d]\arrow[r]&\rho(G_{n-1})\arrow[d]\arrow[r]&0\\
        0\arrow[r]&\G_a^r\arrow[r]&\pi^{-1}(\rho(G_{n-1}))\arrow[r,"\pi"]&\rho(G_{n-1})\arrow[r]&0.
    \end{tikzcd}
    \end{equation} 
    Notice that $\pi^{-1}(\rho(G_{n-1}))\rightarrow\rho(G_{n-1})$ is naturally a $\G_a^r$-torsor and since $\rho(G_{n-1})$ is affine it is the trivial torsor, that is $\pi^{-1}(\rho(G_{n-1}))=\rho(G_{n-1})\times_k\A^r_k
    $ as $k$-schemes. Its structure of $k$-group scheme is given by the embedding $\pi^{-1}(\rho(G_{n-1}))\hookrightarrow\left(W_n\right)^r.$  In particular  $$k[\pi^{-1}(\rho(G_{n-1}))]=k[\rho(G_{n-1})][T_1,\dots,T_r],$$
     with comultiplication extending that of $k[\rho(G_{n-1})]$ and such that $$\Delta(T_i)=T_i\otimes 1+1\otimes T_i+R_{i}$$ where $R_{i}$ is an element of $k[\rho(G_{n-1})]\otimes_kk[\rho(G_{n-1})]$.
    Since $\rho(G)\rightarrow\rho(G_{n-1})$ and $G\rightarrow G_{n-1}$ are both $H_n$-torsors and $G\rightarrow\rho(G)$ is $H_n$-equivariant, the commutative diagram \begin{center}
        \begin{tikzcd}
            G\arrow[r]\arrow[d]&G_{n-1}\arrow[d]\\
            \rho(G)\arrow[r]&\rho(G_{n-1})
        \end{tikzcd}
    \end{center} is indeed a pull-back diagram. 
    Therefore, $$G=G_{n-1}\times_{\rho(G_{n-1})}\rho(G),$$
    which is a closed subgroup scheme of 
    $\mathcal{G}_n:=G_{n-1}\times_{\rho(G_{n-1})}
\pi^{-1}(\rho(G_{n-1})).$ Moreover $$k[\mathcal{G}_n]=k[G_{n-1}]\otimes_{k[\rho(G_{n-1})]}k[\pi^{-1}(\rho(G))]$$$$=k[G_{n-1}]\otimes_{k[\rho(G_{n-1})]}k[\rho(G_{n-1})][T_1,\dots,T_r]$$$$=k[G_{n-1}][T_1,\dots,T_r]$$ with comultiplication extending that of $k[G_{n-1}]$ and such that $$\Delta(T_i)=T_i\otimes 1+1\otimes T_i+R_{i}$$ where $R_{i}$ is an element of $k[G_{n-1}]\otimes_kk[G_{n-1}]$ as wished.
Pulling back the exact sequences in \eqref{exact sequence} and     
by the Snake Lemma, we have a zig-zag map as in the following diagram:
    \begin{center}
    \begin{tikzcd}
    &&0\arrow[d]\arrow[r]&0\ar[out=-10, in=150]{dddll}\arrow[d]&\\
        0\arrow[r]&H_n\arrow[d]\arrow[r]&G\arrow[d]\arrow[r]&G_{n-1}\arrow[d]\arrow[r]&0\\
        0\arrow[r]&\G_a^r\arrow[d]\arrow[r]&\mathcal{G}_n\arrow[d]\arrow[r,"\pi"]&G_{n-1}\arrow[r]\arrow[d]&0\\
        &\G_a^r/H_n\arrow[r]&Q\arrow[r]&0&
    \end{tikzcd}
    \end{center}
    which concludes the proof.
    \end{proof}

   \begin{rmk}\label{algebraic pov}
   Let us point out that the important result of the above Proposition is that the $k$-Hopf algebra structure of $$k[\mathcal{G}_i]=k[G_{i-1}][T_{i1},\dots,T_{ir_i}]$$ extends that of $k[G_{i-1}]$ and is such that $$\Delta(T_{ij})=T_{ij}\otimes1+1\otimes T_{ij}+R_{ij}$$ where $R_{ij}$ is an element of $k[G_{i-1}]\otimes_kk[G_{i-1}].$
    We also translate here the above statement at the level of algebras, since it will be useful in the proof of Theorem \ref{mainthm}.
    The short exact sequence $$0\rightarrow G_i\rightarrow \mathcal{G}_i\rightarrow \G_a^{r_i}\rightarrow0$$ corresponds to 
    \begin{align*}
        0\rightarrow(S_1,\dots,S_{r_i})&\rightarrow k[G_{i-1}][T_{i1},\dots,T_{ir_i}]\rightarrow k[G_i]\rightarrow0\\
        S_i&\mapsto P_i
    \end{align*}
\    where $k[\G_a^{r_i}]=k[S_1,\dots,S_{r_i}].$ Therefore $$k[G_i]=k[G_{i-1}][T_{i1},\dots,T_{ir_i}]/(P_1,\dots,P_{r_i})$$ where the polynomials $P_j$ are primitive elements of $k[G_{i-1}][T_{i1},\dots,T_{ir_i}]$. Notice moreover that the polynomials $P_j$ are  congruent to $T_{ij}^{p^{l_{ij}}}$ for some $l_{ij}\geq1$ modulo the augmentation ideal of $k[G_{i-1}]$ for every $j=1,\dots,r_i$ by the short exact sequence \begin{align*}
        0\rightarrow I_{G_{i-1}}\rightarrow k[G_{i-1}][T_1,\dots,T_{r_i}]&\rightarrow k[T_1,\dots,T_r]\rightarrow0\\
        P_j&\mapsto T_{ij}^{p^{l_{ij}}}.
    \end{align*}
    \end{rmk}  
    
    \begin{example}\label{concretexample}
        Let us illustrate also via an example the proof of Proposition \ref{structureicu}.
        Consider the $k$-group scheme $G=\alpha_p\times_kW_2^1$. Then $V_G$ has nilpotency index $2$ and  $G\subseteq\left(W_2\right)^2.$ We then have the commutative diagram with vertical maps being closed immersions
    \begin{center}
    \begin{tikzcd}
        0\arrow[r]&H_{2}\arrow[d]\arrow[r]&G\arrow[d]\arrow[r]&G_{1}\arrow[d]\arrow[r]&0\\
        0\arrow[r]&\G_a^2\arrow[r]&\left(W_2\right)^2\arrow[r,"\pi"]&\left(W_{1}\right)^2\arrow[r]&0
    \end{tikzcd}
    \end{center} where $$H_{2}=\Ima(V_G)\simeq\alpha_p\subseteq\G_a$$ and $G_1\simeq\alpha_p\times_k\alpha_p$. Notice that $H_2$ is the copy of $\alpha_p$ contained in $W_2^1\subseteq G$ given by the image of the Verschiebung (of $W_2^1$). As a consequence, the projection on the second factor $$\rho\colon (W_2)^2\twoheadrightarrow W_2$$ is such that the composite $$H_2\hookrightarrow(W_2)^2\twoheadrightarrow W_2$$ is a monomorphism. Taking the schematic images of $\rho$ we obtain the commutative diagram given by:
    \begin{center}
    \begin{tikzcd}
        0\arrow[r]&H_2\arrow[d]\arrow[r]&\rho(G)\arrow[d]\arrow[r]&\rho(G_{1})\arrow[d]\arrow[r]&0\\
        0\arrow[r]&\G_a\arrow[r]& W_2\arrow[r,"\pi"]&W_1\arrow[r]&0
    \end{tikzcd}
    \end{center} where $\rho(G)=W_2^1$ and $\rho(G_1)\simeq\alpha_p$. Its vertical maps are closed immersions and they factor in the following way    \begin{equation*}%\label{exact sequence}
    \begin{tikzcd}
        0\arrow[r]&H_2\arrow[d]\arrow[r]&\rho(G)\arrow[d]\arrow[r]&\rho(G_{1})\arrow[d]\arrow[r]&0\\
        0\arrow[r]&\G_a\arrow[r]&\pi^{-1}(\rho(G_{1}))\arrow[r,"\pi"]&\rho(G_{1})\arrow[r]&0.
    \end{tikzcd}
    \end{equation*} 
    Notice that  $\pi^{-1}(\rho(G_{1}))=\rho(G_{1})\times_k\A^1_k\simeq\alpha_p\times_k\A^1_k
    $ as $k$-schemes with structure of $k$-group scheme given by the embedding $\pi^{-1}(\rho(G_{1}))\hookrightarrow\left(W_2\right)^2.$  In particular  $$k[\pi^{-1}(\rho(G_{1}))]=k[\rho(G_{1})][T_1]=k[T_0]/(T_0^p)[T_1],$$
     with comultiplication extending that of $k[\rho(G_{1})]$ and such that $$\Delta(T_1)=T_1\otimes 1+1\otimes T_1+S_1(T_0\otimes1,1\otimes T_0).$$
    %Since $\rho(G)\rightarrow\rho(G_{n-1})$ and $G\rightarrow G_{n-1}$ are both $H_n$-torsors and $G\rightarrow\rho(G)$ is $H_n$-equivariant, the commutative diagram \begin{center}\begin{tikzcd}G\arrow[r]\arrow[d]&G_{n-1}\arrow[d]\\\rho(G)\arrow[r]&\rho(G_{n-1})\end{tikzcd} \end{center} is indeed a pull-back diagram. 
    Now, as in the proof of Proposition \ref{structureicu}, we have $$G=G_{1}\times_{\rho(G_{1})}\rho(G),$$
    which is a closed subgroup scheme of 
    $\mathcal{G}_2:=G_{1}\times_{\rho(G_{1})}
\pi^{-1}(\rho(G_{1})).$ Moreover $$k[\mathcal{G}_2]=k[G_{1}]\otimes_{k[\rho(G_{1})]}k[\pi^{-1}(\rho(G))]$$$$=k[G_{1}]\otimes_{k[\rho(G_{1})]}k[\rho(G_{1})][T_1]$$$$=k[G_{1}][T_1]=k[S_0,T_0]/(S_0^p,T_0^p)[T_1]$$ with comultiplication extending that of $k[G_{1}]$ and such that $$\Delta(T_1)=T_1\otimes 1+1\otimes T_1+S_1(T_0\otimes1,1\otimes T_0)$$
and $$k[G]=\left(k[S_0,T_0]/(S_0^p,T_0^p)\right)[T_1]/(T_1^p).$$
    \end{example}

\section{Actions of finite group schemes}\label{actions}

The first part of this section is devoted to recalling the main definitions around (rational) actions of finite group schemes on varieties, with a focus on faithful and (generically) free actions. The second part is centered on their algebraic counterpart which is given by module algebra structures (see Definition \ref{modalg}).

\subsection{Actions and rational actions}

Let $G$ be a $k$-group scheme, $X$ be a $k$-scheme equipped with a $G$-action $G\times_kX\rightarrow X$ and $\rho\colon G\rightarrow \Aut_X$ be the corresponding group functor homomorphism.

\begin{defi}[Centralizer]
    For any closed $k$-subscheme $Y$ of $X$, the \textit{centralizer} $C_G(Y)$ of $Y$ in $G$ is the subgroup functor that associates to any $k$-scheme $S$ the set of $g\in G(S)$ inducing the identity on the $S$-scheme $Y\times_kS$. The kernel of $\rho$ is the centralizer $C_G(X)$ of $X$ in $G$.
\end{defi}

\begin{defi}[Faithful action]
Let $G$ be a $k$-group scheme and $X$ be a $k$-scheme equipped with a $G$-action $\rho\colon G\rightarrow \Aut_X$. The $G$-action is said to be \textit{faithful} if its kernel is trivial.
\end{defi}

\begin{thm}
    Let $G$ be a $k$-group scheme acting on a separated $k$-scheme $X$. The centralizer $C_G(Y)$ of any closed $k$-subscheme $Y$ of $X$ is represented by a closed $k$-subgroup scheme of $G$.
\end{thm}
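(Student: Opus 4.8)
The plan is to realise $C_G(Y)$ as the locus in $G$ over which two natural morphisms into $X$ coincide on $Y$, and then to represent this locus by extracting an explicit ideal, using crucially that $Y$ is defined over a field.

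First I would reduce to representability by a \emph{closed subscheme}. Since the centralizer of a subscheme is always a subgroup functor (if $g,g'\in G(S)$ induce the identity on $Y\times_kS$, then so do $gg'$ and $g^{-1}$), it suffices to show $C_G(Y)$ is a closed subfunctor of $G$: a closed subscheme of $G$ stable under the group laws is automatically a closed subgroup scheme. To set up the locus, consider the two $k$-morphisms $\Phi,\Psi\colon G\times_kY\to X$ with $\Phi=a\circ(\id_G\times\iota)$, i.e. $(g,y)\mapsto g\cdot y$, and $\Psi=\iota\circ\mathrm{pr}_Y$, i.e. $(g,y)\mapsto y$, where $a$ is the action and $\iota\colon Y\hookrightarrow X$. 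Because $X$ is separated, the equalizer $E\subseteq G\times_kY$ of $\Phi$ and $\Psi$, namely the preimage of the diagonal under $(\Phi,\Psi)\colon G\times_kY\to X\times_kX$, is a closed subscheme. Unwinding the definition, $g\in G(S)$ lies in $C_G(Y)(S)$ exactly when the section $(g,\id)\colon Y_S\to(G\times_kY)_S$ factors through $E_S$, i.e. when the whole fibre of $\mathrm{pr}_G\colon G\times_kY\to G$ over $g$ is contained in $E$.

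Next I would make this locus explicit. The assertion is local on $G$, so I may assume $G=\spec R$; and since $Y$ is quasi-compact I may cover it by finitely many affine opens and, using that the centralizer of $Y$ is the finite, hence closed, intersection of the centralizers of these pieces, reduce to $Y=\spec B$ affine (here only separatedness of $X$ is used, not closedness of the pieces in $X$). Now $G\times_kY=\spec(R\otimes_kB)$ and $E$ is cut out by an ideal $I\subseteq R\otimes_kB$. The key point is that $B$, being a $k$-vector space, is free over the field $k$: fixing a $k$-basis $\{b_\lambda\}$ of $B$ makes $R\otimes_kB=\bigoplus_\lambda R\,b_\lambda$ free over $R$. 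Writing each generator of $I$ as $\sum_\lambda r_\lambda b_\lambda$ and letting $J\subseteq R$ be the ideal generated by all the coefficients $r_\lambda$ that occur, a morphism $g\colon S\to G$ with $g^*\colon R\to\mathcal{O}(S)$ carries the fibre over $g$ into $E$ iff $\sum_\lambda g^*(r_\lambda)b_\lambda=0$ in $\mathcal{O}(S)\otimes_kB$ for every generator, iff $g^*(r_\lambda)=0$ for all $\lambda$ by $\mathcal{O}(S)$-linear independence of the $b_\lambda$. Hence $C_G(Y)(S)$ consists precisely of the maps factoring through the closed subscheme $V(J)\subseteq G$, so $C_G(Y)$ is represented by $V(J)$. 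The ideal $J$ is intrinsic, being the image of $I$ under all partial evaluations $\id_R\otimes\xi$ for $\xi\in B^\vee$, which is what guarantees that the construction glues over the affine cover of $G$.

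The step I expect to be the main obstacle is precisely this last one: verifying that $J$ is independent of the chosen basis and behaves correctly under base change, so that $V(J)$ represents the functor on \emph{all} test schemes $S$ and not merely on field-valued or affine points, and that the local descriptions glue to a closed subgroup scheme of $G$. Freeness of $B$ over $k$, the one place where the hypothesis that $k$ is a field enters decisively, is exactly what makes the coordinate extraction commute with $-\otimes_R\mathcal{O}(S)$ and hence renders these compatibilities routine.
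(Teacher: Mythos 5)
Your proof is correct, and it is genuinely different from what the paper does, because the paper does not prove this statement at all: its entire proof is the citation [SGA3, VI$_\mathrm{B}$, Example 6.2.4.e)], which invokes a general representability criterion for fixators and centralizers. What you have written is in effect a self-contained reconstruction, over a field base, of the mechanism behind that criterion. Your two pivotal points are exactly the right ones: separatedness of $X$ (which is part of the paper's definition of $k$-variety) makes the equalizer $E\subseteq G\times_kY$ a closed subscheme, and freeness of $B=\mathcal{O}(Y)$ as a $k$-module makes coefficient extraction commute with arbitrary base change $R\to\mathcal{O}(S)$, so that $V(J)$ represents the centralizer condition on all test schemes and not merely on affine or field-valued points; your verification that killing the generators of $I$ is equivalent to killing the ideal, and that $J$ is intrinsic (the ideal $\sum_{\xi\in B^\vee}(\id_R\otimes\xi)(I)$, which is indeed already an $R$-ideal), makes the gluing over an affine cover of $G$ legitimate. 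In SGA3 this freeness is axiomatized as $Y$ being ``essentially free'' over the base, which is automatic over a field; that is precisely the hypothesis making the cited example apply. The trade-off between the two routes is the usual one: the citation buys generality (the SGA3 criterion works over an arbitrary base scheme under essential-freeness hypotheses and treats kernels, centralizers and normalizers uniformly), while your argument buys transparency, isolating exactly where ``$k$ is a field'' and ``$X$ is separated'' enter, and it closes the logical loop inside the paper rather than outside it. Your remaining reductions (to affine $G$, to affine $Y$ via a finite affine cover using only separatedness, and the final passage from closed subscheme plus subgroup functor to closed subgroup scheme) are all sound.
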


\begin{proof}
    See \cite[VI$_\mathrm{B}$, Example 6.2.4.e)]{SGA3}.
\end{proof}

\begin{lemma}\label{bschngenfaithful}
   Let $G$ be a $k$-group scheme and $X$ be a separated $k$-scheme endowed with a $G$-action. The $G$-action is faithful if and only if the induced $G_{\overline{k}}$-action on $X_{\overline{k}}$ is faithful.
\end{lemma}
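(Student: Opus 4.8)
The plan is to reduce faithfulness to a statement about the kernel of the action homomorphism and then use that the kernel is represented by a closed subgroup scheme whose triviality can be tested after base change to $\overline{k}$. Recall that the action corresponds to a group functor homomorphism $\rho\colon G\rightarrow\Aut_X$, and that by the centralizer theorem quoted above the kernel $\ker(\rho)=C_G(X)$ is represented by a closed $k$-subgroup scheme $N\subseteq G$. By definition the $G$-action is faithful precisely when $N$ is trivial, i.e.\ when $N=\spec(k)$ as a $k$-group scheme.

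The key observation I would use is that the formation of the centralizer (equivalently, of $\ker(\rho)$) commutes with the flat base change $\spec(\overline{k})\rightarrow\spec(k)$. Indeed, the $G_{\overline{k}}$-action on $X_{\overline{k}}$ is obtained from the given action by base change, so the corresponding homomorphism is $\rho_{\overline{k}}\colon G_{\overline{k}}\rightarrow\Aut_{X_{\overline{k}}}$, and its kernel is $C_{G_{\overline{k}}}(X_{\overline{k}})=N\times_k\overline{k}=N_{\overline{k}}$. This compatibility is what one expects from the functorial/scheme-theoretic description of the centralizer; I would either cite the base-change compatibility in the source for the centralizer theorem or verify it directly on $S$-points for $\overline{k}$-schemes $S$, using that an $S$-point of $G_{\overline{k}}$ lies in the centralizer iff it acts as the identity on $X_{\overline{k}}\times_{\overline{k}}S=X\times_kS$.

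Granting $\ker(\rho_{\overline{k}})=N_{\overline{k}}$, the lemma becomes the purely scheme-theoretic statement that a closed $k$-subgroup scheme $N\subseteq G$ is trivial if and only if $N_{\overline{k}}$ is trivial. For the nontrivial direction, suppose $N_{\overline{k}}$ is trivial, i.e.\ the structure map $N_{\overline{k}}\rightarrow\spec(\overline{k})$ is an isomorphism; since $\overline{k}/k$ is faithfully flat, a morphism of $k$-schemes that becomes an isomorphism after the faithfully flat base change $\spec(\overline{k})\rightarrow\spec(k)$ is itself an isomorphism, so $N\rightarrow\spec(k)$ is an isomorphism and $N$ is trivial. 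The reverse implication is immediate since the trivial group scheme stays trivial after base change.

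The only genuine point requiring care — and the step I expect to be the main obstacle — is the base-change compatibility $C_{G_{\overline{k}}}(X_{\overline{k}})=C_G(X)_{\overline{k}}$, since the centralizer is defined as a subfunctor and its representability is quoted from \cite{SGA3} rather than constructed here. If the cited statement already includes compatibility with base change (as such results typically do), this is immediate; otherwise I would establish it by the $S$-point description above, noting that for $\overline{k}$-schemes $S$ the sets $G_{\overline{k}}(S)$ and $G(S)$ coincide and likewise $X_{\overline{k}}\times_{\overline{k}}S=X\times_kS$, so the two centralizer conditions define the same subfunctor of $G_{\overline{k}}$. Everything else is the standard faithfully flat descent of isomorphisms.
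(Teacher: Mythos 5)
Your proposal is correct and follows essentially the same route as the paper's proof, which likewise reduces faithfulness to triviality of the centralizer, uses the base-change compatibility $C_{G_{\overline{k}}}(X_{\overline{k}})\simeq C_G(X)_{\overline{k}}$, and concludes by descent of triviality along $k\rightarrow\overline{k}$. The paper states these steps in one line without justification, so your fleshed-out $S$-point verification and the faithfully flat descent argument simply make explicit what the paper takes as standard.
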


\begin{proof}
The $G_{\overline{k}}$-action on $X_{\overline{k}}$ is faithful if and only if $C_{G_{\overline{k}}}(X_{\overline{k}})\simeq C_G(X)_{\overline{k}}$ is trivial and this holds true if and only if $C_G(X)$ is trivial.
\end{proof}

\begin{defi}[Free action]
    Let $G$ be a finite $k$-group scheme and $X$ be a $k$-scheme equipped with a $G$-action $\rho\colon G\times_kX\rightarrow X.$ Let $x\colon \spec(k(x))\rightarrow X$ be a point of $X$ and consider the composite $\nu\colon G\times_k\spec(k(x))\stackrel{\id\times x}{\longrightarrow}G\times_kX\stackrel{\rho\times\id}{\longrightarrow}X\times_kX$. The \textit{stabilizer} $\stab_G(x)$ of the point $x$ is the pull-back of the diagram \begin{center}
         \begin{tikzcd}
         \stab_G(x)\arrow[d,dashed]\arrow[r,dashed]&G_{k(x)}\arrow[d,"\nu"]\\
             \spec(k(x))\arrow[r,"diag"]&X\times_kX
         \end{tikzcd}
     \end{center} where the bottom arrow is the diagonal morphism.
     The $G$-action is said to be \textit{free at $x\in X$} if $\stab_G(x)$ is trivial. The $G$-action is said to be \textit{free} if it is free at every point. We denote by $X_{fr}$ the subset of free points of $X$.
\end{defi}

\begin{rmk}\label{stabsgrp}\leavevmode
\begin{itemize}
\item The stabilizer $\stab_G(x)$ of a point $x
\in X$ is the fiber over $\spec(k(x))$ of the \textit{stabilizer} $\stab_G$ defined as the preimage of the diagonal under the graph morphism $$G\times_kX\rightarrow X\times_kX,\quad(g,y)\mapsto (g\cdot y,y)$$ and $\stab_G$ is a closed subgroup scheme of the $X$-scheme $G\times_kX$; in particular, $G$ being finite, the projection $\stab_G\rightarrow X$ is also finite. As a consequence $X_{fr}$, which is the set of points $x$ of $X$ with trivial stabilizer that is for which $\stab_G(x)\simeq\spec(k(x))$, is an open $G$-stable subset of $X$.
    \item Notice that, by universal property of pull-backs, if $H$ is a $k$-subgroup scheme of $G$, then $$\stab_H(x)=\stab_G(x)\times_{G_{k(x)}}H_{k(x)}.$$
\end{itemize}
    
\end{rmk}

\begin{prop}\label{propgenfreeactions}
    Let $G$ be a finite $k$-group scheme and $X$ be an irreducible $k$-scheme with a $G$-action. The following are equivalent:
    \begin{enumerate}
        \item $X_{fr}\neq\emptyset$;
        \item the generic point $\eta$ of $X$ belongs to $X_{fr}$;
        \item $X_{fr}$ is dense in $X.$
    \end{enumerate}
\end{prop}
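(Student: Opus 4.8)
The plan is to reduce the proposition to a standard fact about irreducible topological spaces. The one genuinely geometric input we need is already recorded in the definition of a free action: $X_{fr}$ is an \emph{open} $G$-stable subset of $X$. Granting this, the three conditions concern only the open set $X_{fr}\subseteq X$ together with the generic point of the irreducible scheme $X$, so the equivalences will be forced by irreducibility alone.

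First I would recall that an irreducible scheme $X$ possesses a unique generic point $\eta$, which lies in every nonempty open subset and satisfies $\overline{\{\eta\}}=X$. With this it suffices to prove the cycle $(1)\Rightarrow(2)\Rightarrow(3)\Rightarrow(1)$. For $(1)\Rightarrow(2)$: if $X_{fr}\neq\emptyset$ then, being a nonempty open subset of the irreducible scheme $X$, it contains $\eta$, so $\eta\in X_{fr}$. For $(2)\Rightarrow(3)$: if $\eta\in X_{fr}$ then $X=\overline{\{\eta\}}\subseteq\overline{X_{fr}}$, whence $X_{fr}$ is dense. Finally $(3)\Rightarrow(1)$ is immediate, since a dense subset of the (nonempty) scheme $X$ cannot be empty.

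Thus the entire content of the statement is the openness of $X_{fr}$, and this is where the work really lies; everything after it is formal. If one prefers to justify this openness rather than quote it from the definition, I would proceed as follows. The relative stabilizer (inertia) group scheme $\stab_G\to X$, obtained by pulling back the diagonal $X\to X\times_kX$ along the morphism $G\times_kX\to X\times_kX$, $(g,x)\mapsto(\rho(g,x),x)$, is finite over $X$ because $G$ is finite; its fiber over a point $x$ is the finite $k(x)$-group scheme $\stab_G(x)$. The function sending $x$ to the order (length over $k(x)$) of this fiber is upper semicontinuous, and it is everywhere $\geq 1$ because of the identity section. Hence the free locus, namely the set where this order equals $1$, is the complement of the closed set where it is $\geq 2$, and is therefore open. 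The main obstacle of the proof is precisely this semicontinuity argument; once it is in place, the irreducibility of $X$ does the rest.
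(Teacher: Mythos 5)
Your proof is correct and takes essentially the same route as the paper: the paper's own proof consists precisely of recalling (from the definition of a free action) that $X_{fr}$ is an open $G$-stable subset of $X$ and then invoking irreducibility, which is exactly your cycle $(1)\Rightarrow(2)\Rightarrow(3)\Rightarrow(1)$. One small caveat on your optional justification of openness: finiteness of the inertia scheme over $X$ (which your semicontinuity argument needs) requires the diagonal of $X$ to be a closed immersion, i.e.\ $X$ separated, whereas the proposition assumes only irreducibility—but since both you and the paper quote openness of $X_{fr}$ from the definition rather than proving it, this does not affect the validity of the argument.
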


\begin{proof}
    As we recalled above, $X_{fr}$ is an open $G$-stable subset of $X$. The statement is a direct consequence of this and of the fact that $X$ is irreducible.
\end{proof}

\begin{defi}[Generically free action] Let $G$ be a finite $k$-group scheme and $X$ be an irreducible $k$-scheme with a $G$-action.
    We say that the action is \textit{generically free} if it satisfies one of the above equivalent conditions.
\end{defi}

\begin{rmk} When $G$ is a finite constant group acting on a variety, if the action is faithful then it is automatically generically free. 
    This fails in general for $G$ a finite $k$-group scheme. For example the action $\alpha_p^2\times_k\A^1_k\rightarrow\A^1_k$ given by $(a,b)\cdot x\mapsto x+ax^p+b$ is faithful (there is no non-trivial $k$-subgroup of $\alpha_p^2$ acting trivially) but not generically free, in fact the stabilizer of the generic point $\eta$ is $\stab_G(\eta)=\spec\left(k(x)[S,T]/(x^pS+T,S^p,T^p)\right)$. Faithful actions coincide with generically free actions also for diagonalizable $k$-group schemes (this is known and we also give a proof in Corollary \ref{freeifffaith}). Moreover, we show that this property holds, for instance, for infinitesimal commutative unipotent subgroup schemes of the $k$-group scheme of Witt vectors (see Remark \ref{wittvectors}).
\end{rmk}

\begin{prop}\label{bschngenfree}
    Let $G$ be a finite $k$-group scheme and $X$ be a $k$-variety with a $G$-action. The $G$-action is generically free if and only if the induced $G_{\overline{k}}$-action on $X_{\overline{k}}$ is generically free.
\end{prop}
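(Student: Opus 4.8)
The plan is to reduce everything to the behavior of the stabilizer of the generic point under base change. Since $X$ is a variety it is irreducible with generic point $\eta$, so by Proposition \ref{propgenfreeactions} the $G$-action on $X$ is generically free if and only if $\stab_G(\eta)$ is the trivial group scheme over $K:=k(X)=k(\eta)$. Because $X$ is geometrically integral, $X_{\overline k}$ is again integral; let $\eta'$ be its generic point. The projection $q\colon X_{\overline k}\to X$ sends $\eta'$ to $\eta$, the fiber $q^{-1}(\eta)$ is $\spec(K\otimes_k\overline k)$ with $K\otimes_k\overline k$ a domain (this is exactly geometric integrality read off at the generic point), and hence $k(\eta')=L:=\operatorname{Frac}(K\otimes_k\overline k)$. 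So again by Proposition \ref{propgenfreeactions}, applied this time to the irreducible scheme $X_{\overline k}$, the $G_{\overline k}$-action is generically free if and only if $\stab_{G_{\overline k}}(\eta')$ is trivial over $L$.

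The core step is to identify $\stab_{G_{\overline k}}(\eta')$ with the base change of $\stab_G(\eta)$ along $K\hookrightarrow L$. The stabilizer is defined as a pull-back, and every object in that diagram ($G$, $X$, $X\times_k X$, and the point $\eta$) base-changes along $k\to\overline k$; since fiber products commute with base change, $\stab_G(\eta)\times_k\overline k$ is precisely the stabilizer of the base-changed section $\eta_{\overline k}\colon \spec(K\otimes_k\overline k)\to X_{\overline k}$ for the $G_{\overline k}$-action, a group scheme over $K\otimes_k\overline k$. Precomposing this section with the localization $\spec L\to\spec(K\otimes_k\overline k)$ at the generic point, and using once more that the defining pull-back is compatible with base change of the point, I obtain
\[
\stab_{G_{\overline k}}(\eta')=\bigl(\stab_G(\eta)\times_k\overline k\bigr)\times_{K\otimes_k\overline k}L=\stab_G(\eta)\times_K L,
\]
the second equality holding because $K\to K\otimes_k\overline k\to L$ exhibits $L$ as a $K$-algebra.

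To conclude I would invoke faithfully flat descent along the field extension $K\to L$: a group scheme $H$ over $K$ is trivial, i.e. $H\to\spec K$ is an isomorphism, if and only if $H\times_K L\to\spec L$ is an isomorphism, since $\spec L\to\spec K$ is faithfully flat. Taking $H=\stab_G(\eta)$ shows that $\stab_G(\eta)$ is trivial if and only if $\stab_{G_{\overline k}}(\eta')$ is, which by the two applications of Proposition \ref{propgenfreeactions} is exactly the asserted equivalence. This runs in the same spirit as Lemma \ref{bschngenfaithful}, with the centralizer of $X$ replaced by the stabilizer of the generic point.

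I expect the main obstacle to be the bookkeeping in the middle step: one must keep track of the fact that the generic point of $X_{\overline k}$ lives over the fraction field $L$ of $K\otimes_k\overline k$ rather than over $K\otimes_k\overline k$ itself, so that the comparison with $\stab_G(\eta)$ passes through two successive base changes (first $k\to\overline k$, then localization to $L$), and to check carefully that the stabilizer pull-back is compatible with each. The geometric integrality of $X$ is used twice and is essential: it guarantees both that $X_{\overline k}$ is irreducible, so that Proposition \ref{propgenfreeactions} applies, and that $K\otimes_k\overline k$ is a domain, so that $L$ is well defined.
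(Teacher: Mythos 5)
Your proof is correct and follows essentially the same route as the paper: identify the generic point of $X_{\overline{k}}$ with the base change of $\eta$, use compatibility of the stabilizer pull-back with base change, and conclude by faithfully flat descent. The only cosmetic difference is your passage to $L=\operatorname{Frac}(K\otimes_k\overline{k})$: since $\overline{k}/k$ is algebraic and $K\otimes_k\overline{k}$ is a domain (geometric integrality), it is in fact already a field, so $L=K\otimes_k\overline{k}$ and your localization step is vacuous --- this is precisely the observation the paper makes when it notes that $K\hookrightarrow K\otimes_k\overline{k}$ is a (faithfully flat) field extension.
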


\begin{proof}
    Let $\eta\colon \spec(k(\eta))\rightarrow X$ be the generic point of $X$. Since $X$ is geometrically integral, then the generic point of $X_{\overline{k}}$ is the base change $\overline{\eta}\colon \spec\left(k(\eta)\otimes_k\overline{k}\right)\rightarrow X_{\overline{k}}$ (see for example \cite[Chapter 3, Corollary 2.14]{Liu}). Therefore, by general properties of the base change, we have that the stabilizer of $\overline{\eta}$ is $$\stab_{G_{\overline{k}}}(\overline{\eta})\simeq \stab_G(\eta)_{\overline{k}}\simeq \stab_G(\eta)\times_{\spec(k(\eta))}\spec\left(k(\eta)\otimes_k\overline{k}\right)$$ where $\stab_G(\eta)$ is the stabilizer of $\eta.$ Now, since $k(\eta)\hookrightarrow k(\eta)\otimes_k\overline{k}$ is faithfully flat (it is a field extension since $X$ is geometrically integral), then $\stab_{G_{\overline{k}}}(\overline{\eta})$ is trivial (i.e. isomorphic to $\spec\left(k(\eta)\otimes_k\overline{k}\right)$) if and only if $\stab_G(\eta)$ is trivial (i.e. isomorphic to $\spec(k(\eta))$), as wished.
\end{proof}

%Before giving the definition of rational action, we recall what we mean by a rational (and birational) map between schemes. For these we follow \cite{brion2022actions}.\begin{defi}[Rational map]A \textit{rational map} $f\colon Y\dashrightarrow Z$ of $k$-varieties is an equivalence class of pairs $(U,\varphi)$, where $U$ is a schematically dense open subset of $Y$, and $\varphi:U\rightarrow Z$ is a morphism; two pairs $(U,\varphi)$ and $(V,\psi)$ are equivalent if there exists a schematically dense open subset $W\subseteq U\cap V$ such that $\varphi_{|W}=\psi_{|W}$ . Every rational map $f\colon Y\dashrightarrow Z$ has a unique representative $(U,\varphi)$, where $U$ is maximal; then $U$ is the \textit{domain of definition} $\mathrm{dom}(f)$ of $f$. The rational map $f$ is \textit{birational} if it admits a representative $(U,\varphi)$ such that $\varphi$ is an isomorphism onto a schematically dense open subset of $Z$. \end{defi}

\begin{defi}[Rational action]
    Let $G$ be a $k$-group scheme and $X$ a $k$-variety. A \textit{rational action} of $G$ on $X$ is a rational map $\rho\colon G\times_kX\dashrightarrow X$ such that:
    \begin{enumerate}[label=(\roman*)]
        \item the rational map $(\pi_1,\rho)\colon G\times_kX\dashrightarrow G\times_kX, (g,x)\mathrel{\mapstochar\dashrightarrow}(g,g\cdot x)$ is birational;
        \item the following diagram commutes 
        \begin{center}
            \begin{tikzcd}
                G\times_kG\times_kX\arrow[d,"\id_G\times\rho",dashed]\arrow[r,"m\times \id_X"]&G\times_kX\arrow[d,dashed,"\rho"]\\
                G\times_kX\arrow[r,dashed,"\rho"]&X
            \end{tikzcd}
        \end{center}
        where $m\colon G\times_kG\rightarrow G$ denotes the multiplication morphism of $G$.
    \end{enumerate}
\end{defi}

%Let us stress that the two compositions of rational maps in the above diagram make sense. Indeed, as a consequence of (i), $\rho$ is dominant, since it is the composition of the birational map $(\pi_1,\rho)$ with the second projection. Therefore, the image of $\rho$ contains a dense open subset $W\subseteq X$ and thus the image of $\id_G\times\rho$ contains $G\times_kW$. Let $V$ denote the domain of definition of $\rho$; then the composite $\rho\circ(\id_G\times\rho)$ is defined on the open $(\id_G\times\rho)^{-1}(V\cap(G\times_kW))$. Moreover, the composite $\rho\circ(m\times\id_X)$ is defined since $m\times\id_X$ is a morphism.

\begin{rmk} When $G$ is a \textit{finite} $k$-group scheme, there is a bijection between rational actions of $G$ on a $k$-variety $X$ and $G$-actions on the generic point of $X$(see \cite[Corollary 3.4]{brion2022actions}).
\end{rmk}

\begin{defi}[Faithful rational action]
    Let $G$ be a finite $k$-group scheme and $X$ be a $k$-variety equipped with a rational action $\rho\colon G\times_kX\dashrightarrow X.$ We say that it is a \textit{faithful rational action} if the corresponding action on the generic point of $X$ is faithful.
\end{defi}

The following is a known result (see for example \cite[Section 2]{TossiciVistoli}), we include the proof for the sake of completeness. The proof we give can be deduced by \cite[Lemma 5.3]{brion2022actions} where the case of curves is treated. 

\begin{prop}\label{dimlie}
    Let $G$ be a finite $k$-group scheme and $X$ be a $k$-variety endowed with a generically free rational $G$-action. 
    Then $$\dim_k(\Lie (G))\leq\dim(X).$$
\end{prop}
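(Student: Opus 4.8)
The plan is to relate $\Lie(G)$ to the stabilizer of the generic point and exploit that generic freeness forces this stabilizer to be trivial. The key observation is that for an infinitesimal or general finite $k$-group scheme, the dimension of the Lie algebra is an invariant that can be read off from how $G$ acts infinitesimally at a point. Concretely, I would work at the generic point $\eta$ of $X$ with function field $K = k(X)$, a separable extension of transcendence degree $n = \dim(X)$. The generically free hypothesis says (by Proposition \ref{propgenfreeactions}) that $\stab_G(\eta)$ is trivial, i.e. the action map $G_K \to X_K \times_K X_K$ composed with the diagonal has trivial fiber.

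First I would pass to the tangent/Lie algebra level. By Remark \ref{liefrobkern} we have $\Lie(G) = \Lie(\ker(F_G))$, so it suffices to bound $\dim_k \Lie(G)$ by analyzing the infinitesimal action. The action $\rho\colon G\times_k X \dashrightarrow X$ induces, on the generic point, a $G$-action on $\spec(K)$; differentiating this action gives a $k$-linear map from $\Lie(G)$ to the space of $k$-derivations of $K$, i.e. $\Lie(G) \to \mathrm{Der}_k(K,K)$. The crucial point is that generic freeness implies this map is \emph{injective}: if a tangent vector $v \in \Lie(G)$ acted as the zero derivation, it would land in the stabilizer of $\eta$, contradicting triviality of $\stab_G(\eta)$. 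I would make this precise by identifying $\Lie(G)$ with $G(k[\varepsilon])$-points killing the augmentation (dual numbers $\varepsilon^2 = 0$) and checking that such a point fixing $\eta$ gives a nontrivial element of $\stab_G(\eta)(k(\eta)[\varepsilon])$.

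Then I would invoke the dimension count on derivations. Since $K/k$ is separable of transcendence degree $n$, the module of Kähler differentials $\Omega_{K/k}$ is a $K$-vector space of dimension $n$, so $\mathrm{Der}_k(K,K) = \Hom_K(\Omega_{K/k}, K)$ is also an $n$-dimensional $K$-vector space. The injection $\Lie(G) \hookrightarrow \mathrm{Der}_k(K,K)$ is a map of $k$-vector spaces into an $n$-dimensional $K$-space; but I need to conclude $\dim_k \Lie(G) \leq n$, not merely $\dim_K \leq n$. The resolution is that the image consists of derivations that are $K$-linearly independent whenever they are $k$-linearly independent — this follows because the relevant derivations arise from a $G$-module algebra (Hopf-algebra) structure on $K$, and nilpotent/commuting derivations coming from distinct Lie algebra directions remain independent over $K$. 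More cleanly, one shows $\Lie(G)\otimes_k K \hookrightarrow \mathrm{Der}_k(K,K)$, giving $\dim_k\Lie(G) = \dim_K(\Lie(G)\otimes_k K) \leq n$.

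\textbf{The main obstacle} I anticipate is exactly this last step: upgrading $k$-linear independence in $\Lie(G)$ to $K$-linear independence of the associated derivations, i.e. establishing that $\Lie(G)\otimes_k K \to \mathrm{Der}_k(K,K)$ is injective rather than merely that $\Lie(G)\to \mathrm{Der}_k(K,K)$ is. This is where the group-scheme structure is essential — a naive $k$-linear injection does not by itself bound the $k$-dimension by $n$. I would handle it by expressing the action via the comodule structure $K \to K \otimes_k k[G]$ and showing that the dual action of $\Lie(G) = (I_G/I_G^2)^\vee$ on $K$ gives derivations that are a free family over $K$ precisely when the stabilizer is trivial; the triviality of $\stab_G(\eta)$ as a $k(\eta)$-group scheme (not just the vanishing of its $k$-points) is what guarantees independence after the base change to $K$. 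This base-change subtlety is likely the technical heart of the argument.
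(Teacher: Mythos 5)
Your proposal is correct, but it runs along a different track than the paper's proof. The paper also differentiates the orbit map and uses the fact (DG III.\S2, 2.6) that $\Lie(\stab_G(x))$ is the kernel of $\Lie(G)\rightarrow T_xX$; however, it deliberately avoids the generic point. It first reduces to $G$ infinitesimal (via $\Lie(G)=\Lie(\ker F_G)$ and Remark \ref{stabsgrp}) and to $k=\overline{k}$ (via Proposition \ref{bschngenfree}), then uses density of $X_{fr}$ to pick a \emph{smooth closed point} $x$ with trivial stabilizer. At such a point $T_xX$ is a $k$-vector space of dimension $n$, so the injection $\Lie(G)\hookrightarrow T_xX$ gives the bound with no base-change subtlety at all -- this is exactly what going to a closed point buys. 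Your route instead stays at the generic point, which buys you the opposite trade: no reduction to $\overline{k}$, no appeal to density of the free locus or to smoothness of a point, but you must then control $\Lie(G)\otimes_kK\rightarrow \mathrm{Der}_k(K,K)$ rather than just $\Lie(G)$, since a $k$-linear injection into an $n$-dimensional $K$-space bounds nothing. You correctly identify this as the crux and correctly resolve it: triviality of $\stab_G(\eta)$ \emph{as a $K$-group scheme} kills all of $\stab_G(\eta)(K[\varepsilon])$, and the same DG statement applied over the base field $K$ identifies $\Lie(\stab_G(\eta))$ with the kernel of the $K$-linear map $\Lie(G_K)\rightarrow T_\eta X\simeq\mathrm{Der}_k(K,K)$, whose target is $n$-dimensional over $K$ by separability of $K/k$. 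One caution: your intermediate formulation, that derivations coming from the module-algebra structure ``remain independent over $K$,'' is not true in that generality -- faithfulness alone gives only $k$-independence, as the paper's example of $\alpha_p^2$ acting on $\A^1_k$ by $(a,b)\cdot x=x+ax^p+b$ shows (the derivations $x^p\partial_x$ and $\partial_x$ are $k$-independent but $K$-proportional, and the action is faithful yet not generically free). Your final, stabilizer-based formulation is the correct one and is indeed where the group-scheme structure of the stabilizer, not mere faithfulness, does the work.
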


\begin{proof}  Let $U$ be an open subset of $X$ on which the action is defined (see for example \cite[Proposition 3.2]{brion2022actions}). 
Suppose first that $char(k)=0$, then $G$ is étale, thus $$0=\dim_k(\Lie (G))=\dim (G)$$ and the statement follows.
    Suppose then that $char(k)=p>0$ and let $G_1$ be the kernel of the Frobenius morphism $F_G\colon G\rightarrow G^{(p)}$. Then $G_1$ is an infinitesimal $k$-subgroup scheme of $G$ and $\Lie (G)=\Lie (G_1)$. If the $G$-action on $U$ is generically free, then the same holds for $G_1$. We can thus suppose that $G=G_1$ and, by Proposition \ref{bschngenfree}, that $k$ is algebraically closed.
    By Proposition \ref{propgenfreeactions}, the $G$-action is generically free if and only if $X_{fr}$ is dense in $X$ and thus there exists (since $X$ is geometrically integral and $k=\overline{k}$) a smooth closed point $x\in U$ with trivial stabilizer $\stab_G(x).$ Since $G$ is infinitesimal, then also $\stab_G(x)$ is such, hence $\stab_G(x)$ is trivial if and only if $\Lie (\stab_G(x))$ is trivial. Now, $\Lie (\stab_G(x))$ is the kernel of the natural map $\Lie (G)\rightarrow T_xU$ (see \cite[III.\textsection2, 2.6]{DG}) and therefore if the action is generically free this map is an injection and thus the statement.
\end{proof}

\subsection{Actions of finite group schemes and module algebras}

Some references for this part are \cite[Chapter VII]{Sweedler} and \cite[Chapter 4]{Montgomery}.

\begin{defi}[Module algebra]\label{modalg} Let $A$ be a $k$-Hopf algebra, not necessarily commutative. We say that a $k$-algebra $B$ is a (left) \textit{$A$-module algebra} if:
\begin{enumerate}
    \item $B$ is a (left) $A$-module via $\psi\colon A\otimes_kB\rightarrow B, a\otimes b\mapsto a\cdot b;$
    \item the morphism $\eta\colon B\rightarrow\Hom_k(A,B),b\mapsto(a\mapsto a\cdot b)$ is a (unital) morphism of $k$-algebras.
\end{enumerate}
One can give the same definition on the right as well.
\end{defi}

The following remark is of key importance for the way in which we will view module algebras throughout this whole work.

\begin{rmk}[Property of compatibility with products]\label{initialrmk}
Notice that the first condition of the above definition is equivalent to giving a map $$v\colon A\rightarrow\mathrm{End}_k(B),a\mapsto(b\mapsto a\cdot b)$$ which is a morphism of $k$-algebras, while the second requirement corresponds to asking that $v$ satisfies the following properties:
\begin{equation}\label{property}
   \left\{\begin{array}{ll}
    v(a)(1)=\varepsilon(a) \\
    v(a)(fg)=m_B(v\otimes v\circ\Delta(a))(f\otimes g)
\end{array}\right.
\end{equation}
 for any $a\in A$ and $f,g\in B$. Here $\varepsilon$ denotes the counit of $A,$ $\Delta$ its comultiplication and $m_B$ the multiplication of $B.$ We will also refer to (\ref{property}) as the \textit{property of compatibility with products}.
The first statement is straightforward by the definition of $A$-module. For the second one, recall that $\Hom_k(A,B)$ has a structure of $k$-algebra (Remark \ref{homalgebra}) with multiplication given by the convolution product $$\phi\otimes\chi\mapsto m_B\circ\phi\otimes\chi\circ\Delta$$ and unit $$k\rightarrow\Hom_k(A,B),1\mapsto u\circ\varepsilon.$$ Moreover $\eta(b)(a)=v(a)(b).$ Therefore, $\eta$ is a morphism of (unital) $k$-algebras if and only if $$\eta(fg)=m_B\circ\eta(f)\otimes\eta(g)\circ\Delta\quad\mbox{and}\quad \eta(1)=1_{\Hom_k(A,B)}$$ for all $f,g\in B$ if and only if $$v(a)(fg)=\eta(fg)(a)=m_B\circ\eta(f)\otimes\eta(g)\circ\Delta(a)=m_B(v\otimes v\circ\Delta(a))(f\otimes g)$$ and $$a\cdot1=v(a)(1)=\eta(1)(a)=\varepsilon(a)$$ for any $a\in A.$ Finally, notice that, if we denote by $I:= \ker(\varepsilon)$ the augmentation ideal of $A,$ then by what we just showed it holds that $v(a)(1)=0$ for any $a\in I.$ %\textcolor{bibi}{quest'ultima frase spiega perche' per gruppi infinitesimali si finisce della parte + degli operatori differenziali, metti un rmk?}
\end{rmk}

Module algebras are very useful when studying the actions of finite $k$-group schemes, thanks to the following result.

\begin{prop}\label{actionsvsmodulealgstrctr}
    Let $G=\spec(A)$ be a finite $k$-group scheme and $X=\spec(B)$ be an affine $k$-scheme. There is a bijection between the set of right actions of $G$ on $X$ and the set of left $A^\vee$-module algebra structures on $B$. 
\end{prop}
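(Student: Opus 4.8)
The plan is to establish the bijection by translating the geometric data of a right $G$-action on $X=\spec(B)$ into the algebraic data of a left $A^\vee$-module algebra structure, in both directions, and checking these assignments are mutually inverse. A right action $\rho\colon X\times_k G\to X$ corresponds on rings to a $k$-algebra homomorphism $\rho^\#\colon B\to B\otimes_k A$. First I would unwind what the action axioms (compatibility with the comultiplication $\Delta$ of $A$ and with the counit $\varepsilon$) say about $\rho^\#$: they make $\rho^\#$ a coaction, i.e. they say $B$ is a right $A$-comodule algebra. So the real content is the standard duality: for a \emph{finite} $k$-group scheme, $A$ is finite-dimensional, so giving a right $A$-comodule structure on $B$ is equivalent to giving a left $A^\vee$-module structure on $B$, via the canonical pairing $A^\vee\otimes_k A\to k$. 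The finiteness of $G$ is exactly what makes $A^\vee=\Hom_k(A,k)$ a Hopf algebra (by the lemma quoted above) and makes the comodule-module dictionary an equivalence, so I would flag the finite-dimensionality of $A$ as the hypothesis being used at this step.

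Concretely, from the coaction $\rho^\#(b)=\sum b_{(0)}\otimes b_{(1)}$ (Sweedler notation, with $b_{(0)}\in B$, $b_{(1)}\in A$) I would define the $A^\vee$-action by $\phi\cdot b=\sum b_{(0)}\,\langle\phi,b_{(1)}\rangle$ for $\phi\in A^\vee$, where $\langle-,-\rangle$ is the evaluation pairing. The next step is to verify that this makes $B$ a left $A^\vee$-\emph{module algebra} in the sense of Definition \ref{modalg}, not merely a module. Unitality of the action corresponds to the counit axiom of the coaction, and the crucial multiplicativity condition (the second requirement of Definition \ref{modalg}, equivalently the compatibility with products property \eqref{property}) corresponds exactly to $\rho^\#$ being a ring homomorphism: the fact that $\rho^\#(b b')=\rho^\#(b)\rho^\#(b')$, combined with the way multiplication in $A^\vee=\Hom_k(A,k)$ is built from $\Delta$ (Remark \ref{homalgebra} with $C=k$), yields precisely $\phi\cdot(bb')=m_B\big((\,\cdot\otimes\,\cdot\,)\circ\Delta_{A^\vee}(\phi)\big)(b\otimes b')$. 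This matching of the comodule-algebra axiom with the module-algebra axiom is the computational heart of the argument, and is where one must be careful to keep the two coproducts (that of $A$ governing the coaction, and the dual one on $A^\vee$ governing the module-algebra structure) correctly aligned under the pairing.

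For the reverse direction, given a left $A^\vee$-module algebra structure on $B$, I would dualize: since $A$ is finite-dimensional the evaluation $B\otimes_k A^\vee\to B$ transposes to a map $B\to B\otimes_k (A^\vee)^\vee\cong B\otimes_k A$, and the module-algebra axioms translate back into this map being a $k$-algebra homomorphism satisfying the coaction identities, hence a right $G$-action on $X$. Because the two constructions are built from the same evaluation pairing and the canonical isomorphism $(A^\vee)^\vee\cong A$ (valid for finite-dimensional $A$), they are inverse to one another, giving the claimed bijection.

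The main obstacle I anticipate is purely bookkeeping rather than conceptual: one must track the left/right and module/comodule conventions consistently, and in particular verify that the \emph{algebra} structure on $A^\vee$ induced by $\Delta_A$ (as in Remark \ref{homalgebra}) is the one whose module-algebra compatibility \eqref{property} matches the ring-homomorphism property of the coaction. There is also the genuine-but-standard point that the equivalence between $A$-comodules and $A^\vee$-modules requires $A$ finite-dimensional, so I would not try to prove the statement for general affine $G$; the finiteness hypothesis is essential and should be invoked explicitly when passing between $A$ and its double dual.
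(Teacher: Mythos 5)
Your proposal is correct and takes essentially the same approach as the paper: the paper's proof consists precisely of the assignment $\alpha\mapsto\big(b\mapsto(\id_B\otimes\alpha)\circ\rho(b)\big)$, which is your formula $\phi\cdot b=\sum b_{(0)}\langle\phi,b_{(1)}\rangle$, and then cites Montgomery, \S4.1 for the comodule-algebra/module-algebra dictionary and the inverse construction via $(A^\vee)^\vee\simeq A$. The details you spell out (that the action axioms make $\rho^\#$ a comodule-algebra structure, that finite-dimensionality of $A$ is what makes the dictionary a bijection, and that the product-compatibility of the module structure corresponds to $\rho^\#$ being a ring homomorphism) are exactly the ones the paper delegates to that reference.
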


\begin{proof}
To give a right action $X\times_kG\rightarrow X$ of an affine group scheme $G=\spec(A)$ on an affine scheme $X=\spec(B)$ is equivalent to giving a coaction $B\rightarrow B\otimes_kA$ (see for example \cite[Chapter 3]{Waterhouse}).
The claimed bijection is then obtained associating to any coaction $\rho\colon B\rightarrow B\otimes_kA$ the $A^\vee$-module algebra structure \begin{align*}
    v\colon A^\vee&\rightarrow\mathrm{End}_k(B)\\
    \alpha&\mapsto (B\stackrel{\rho}{\longrightarrow}B\otimes_kA\stackrel{\id_B\otimes\alpha}{\longrightarrow}B\otimes_kk\simeq B).
\end{align*}
    For more details see for example \cite[\textsection4.1]{Montgomery}.
\end{proof}

When dealing with infinitesimal group schemes, one can specialize Proposition \ref{actionsvsmodulealgstrctr} and prove that to give an action of these group schemes amounts to exhibiting a certain number of differential operators (see \cite[II.\textsection4, 5]{DG} for a definition of the algebra of differential operators on a scheme) satisfying some relations. We will denote by $\mathrm{Diff}_k(B)$ the $k$-algebra of differential operators on an affine $k$-scheme $X=\spec(B)$.

\begin{prop}\label{infinitesimalactions}
Let $G=\spec(A)$ be an infinitesimal $k$-group scheme and $X=\spec(B)$ be an affine $k$-scheme. There is a bijection between the set of right actions of $G$ on $X$ and the set of homomorphisms of $k$-algebras $v\colon A^\vee\rightarrow \mathrm{Diff}_k(B)$ such that
\begin{equation*}
v(\mu)(fg)=m_B(v\otimes v\circ\Delta(\mu))(f\otimes g)
\end{equation*} for any $\mu\in A^\vee$ and $f,g\in B$, where $\Delta$ and $m_B$ denote respectively the comultiplication of $A$ and the multiplication of $B$.
\end{prop}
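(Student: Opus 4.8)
The plan is to reduce this to the already-established bijection of Proposition \ref{actionsvsmodulealgstrctr} between right $G$-actions on $X$ and left $A^\vee$-module algebra structures on $B$, and then to show that the infinitesimality of $G$ forces exactly two additional features: first, that each operator $v(\mu)$ is a differential operator killing $1$, and second, that the defining $A^\vee$-module algebra axiom is precisely the displayed compatibility with products identity. Since Proposition \ref{actionsvsmodulealgstrctr} already identifies module algebra structures with actions, and Remark \ref{initialrmk} already rewrites condition (2) of Definition \ref{modalg} as the property \eqref{property}, most of the work is in pinning down the image of $v$.

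First I would unwind the correspondence so that the two sides match set-theoretically: a module algebra structure is a $k$-algebra homomorphism $v\colon A^\vee\to\mathrm{End}_k(B)$ satisfying \eqref{property}, and I must show that the condition ``$G$ infinitesimal'' is equivalent to the image landing inside $\mathrm{Diff}^+_k(B)$. The containment $v(A^\vee)\subseteq\mathrm{Diff}^+_k(B)$ should follow because the augmentation ideal $I_A\subseteq A$ is nilpotent, so its dual $I_{A^\vee}$ (or rather the elements of $A^\vee$ that vanish on the counit) act as nilpotent-order differential operators; concretely, one filters $A^\vee$ by the order of vanishing on powers of the maximal ideal of $A$, and nilpotency of $I_A$ makes this filtration exhaustive with each graded piece giving operators of bounded differential order. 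The fact that $v(\mu)(1)=0$ for $\mu$ in the augmentation kernel is exactly the last sentence of Remark \ref{initialrmk}: $a\cdot 1=0$ for $a\in I$, which is why we may restrict the target to $\mathrm{Diff}^+_k(B)$ rather than all of $\mathrm{Diff}_k(B)$.

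Next I would verify that the displayed multiplicativity identity
\[
v(\mu)(fg)=\sum_i(v(\mu_i)(f))(v(\nu_i)(g))=m_B(v\otimes v\circ\Delta(\mu))(f\otimes g)
\]
is literally the second line of \eqref{property} once one writes $\Delta\mu=\sum_i\mu_i\otimes\nu_i$ using Sweedler notation; this is a transcription, not a computation, so it costs nothing beyond invoking Remark \ref{initialrmk}. For the converse direction, I would argue that any $k$-algebra homomorphism $v\colon A^\vee\to\mathrm{Diff}^+_k(B)$ satisfying this identity automatically gives a module algebra structure: condition (1) of Definition \ref{modalg} is the algebra-homomorphism property, and condition (2) is recovered from the identity together with $v(1)=\mathrm{id}$ and the unit normalization $v(\mu)(1)=\varepsilon(\mu)$, the latter being forced because $v$ sends the augmentation-trivial part into operators annihilating $1$ while sending the unit to the identity.

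The main obstacle, and the only genuinely nontrivial point, is establishing that multiplicativity of $v$ as an algebra map combined with infinitesimality really does confine the image to \emph{differential} operators (as opposed to arbitrary $k$-linear endomorphisms). The clean way is to invoke the standard identification, for an infinitesimal group scheme, of $A^\vee$ with the algebra of distributions supported at the identity, which acts on any $B$ through differential operators by \cite[II.\textsection4, 5]{DG}; the order of each operator is bounded by the nilpotency order witnessing infinitesimality. Once this structural input is in place, the bijection is just Proposition \ref{actionsvsmodulealgstrctr} restricted along the inclusion $\mathrm{Diff}^+_k(B)\hookrightarrow\mathrm{End}_k(B)$, and the compatibility identity is the unchanged module algebra axiom, so I would present the proof as ``specialize Proposition \ref{actionsvsmodulealgstrctr}, then identify the image using the distribution-algebra description and Remark \ref{initialrmk}.''
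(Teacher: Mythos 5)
Your proposal is correct, but note that the paper's own ``proof'' of Proposition \ref{infinitesimalactions} is nothing more than the citation \cite[II.\textsection4, Proposition 7.2]{DG}, so your argument is genuinely different in that it actually reconstructs the content rather than deferring it. Your decomposition is the right one: specialize Proposition \ref{actionsvsmodulealgstrctr}, use Remark \ref{initialrmk} to see that the displayed identity is literally the module algebra axiom \eqref{property} in Sweedler notation, and isolate the single nontrivial point, namely that infinitesimality forces $v(A^\vee)\subseteq\mathrm{Diff}^+_k(B)$. This buys transparency that the bare citation hides: the only new content beyond the affine bijection is the order bound on the operators $v(\mu)$, coming from nilpotency of $I_A$ (if $\mu$ kills $I_A^{n+1}$ then $v(\mu)$ is a differential operator of order $\leq n$, proved by induction on $n$ via the commutator criterion, since $\Delta\mu$ can be written with components vanishing on lower powers of $I_A$; exhaustiveness is $I_A^N=0$). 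Two points to tighten. First, your fallback citation \cite[II.\textsection4, 5]{DG} is where distributions and differential operators are merely \emph{defined}; the statement you actually need (that the distribution algebra acts through differential operators of bounded order, yielding the bijection) is the content of II.\textsection4, no.~7 --- essentially the proposition being proved --- so you should either carry out your filtration/induction sketch in full or cite 7.1/7.2 directly, otherwise the ``clean way'' is circular. Second, the unit convention deserves an explicit sentence: since $\mathrm{Diff}^+_k(B)$ consists of operators killing $1$, the map $v$ cannot be literally a unital homomorphism; one must either restrict to the augmentation ideal of $A^\vee$ or adopt the convention $v(1_{A^\vee})=\mathrm{id}_B$. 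You do handle this consistently via the normalization $v(\mu)(1)=\varepsilon(\mu)$, but the wrinkle is inherited from the statement itself and should be flagged rather than absorbed silently.
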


\begin{proof}
    See \cite[II.\textsection4, Proposition 7.2]{DG}.
\end{proof}

\begin{example}\label{alphap}\leavevmode
\begin{enumerate}
   \item Consider the self-dual infinitesimal $k$-group scheme $\alpha_p=\spec(k[T]/(T^p))$ whose group structure is given by $$\Delta(T)=T\otimes1+1\otimes T.$$ To give an action of $\alpha_p$ on a $k$-scheme $X=\spec(B)$ is equivalent to giving a $k$-linear derivation $\partial\colon B\rightarrow B$ such that $\partial^p=0$.
\item Consider the purely transcendental extension $k(t)/k$. The algebra of differential operators $\mathrm{Diff}_k(k(t))$ is a $k(t)$-vector space with basis given by $\left\{\frac{\partial}{\partial t^i}\right\}$ where $$\frac{\partial}{\partial t^i}(t^r)=\left\{\begin{array}{cc}
        \binom{r}{i}t^{r-i} & \mbox{ if } r\geq i \\
        0 & \mbox{otherwise}.
    \end{array}\right.$$ If $k$ has characteristic zero, then $\frac{\partial}{\partial t^i}=\frac{1}{i!}\left(\frac{\partial}{\partial t}\right)^i$. On the other hand, if $k$ has characteristic $p>0$ this does not make sense for $i=0\mod p.$ In this case if $i=jp^s$ for some $s\geq0$ with $j\neq0\mod p$, then $$\frac{\partial}{\partial t^i}=\frac{\partial}{\partial t^{jp^s}}=\frac{1}{j!}\left(\frac{\partial}{\partial t^{p^s}}\right)^j.$$ We will denote by $\partial_{p^s}$ the differential operator $\frac{\partial}{\partial t^{p^s}}.$
    \end{enumerate}
\end{example}

\section{Nilpotent derivations and  \texorpdfstring{$p$}{p}-bases}\label{nilpder}

This part will be devoted to nilpotent derivations, which are often encountered when studying actions of infinitesimal group schemes, as seen for example in \ref{alphap}. Some of the results appearing here might be known to experts, we have included their proof for lack of a reference. Proposition \ref{pbasis} and Corollary \ref{systemsol} will play an important role in the proof of Theorem \ref{mainthm}. 
A background reference for $p$-bases is \cite[V.\textsection13]{Bourbaki}.

For $K$ a field and $D$ a derivation on $K$, we will denote by $K^D$ the subfield of elements of $K$ annihilated by $D$, that is $$K^D:=\{x\in K\mid D(x)=0\}.$$
A derivation $D$ of $K$ is said to be nilpotent of nilpotency index $r$ if $D^r=0$ and $D^{r-1}\neq0$, where for any natural $n$ we denote by $D^n$ the composite of $D$ with itself iterated $n$ times.
We begin by recalling a fundamental result showing that nilpotent derivations on fields appear only in characteristic $p>0$ and that nilpotency indices are always $p$-powers.

\begin{thm}\label{smitsthm}
    Let $D$ be a non-zero nilpotent derivation of nilpotency index $r$ on a field $K$. Then $K$ has characteristic $p\neq0$ and $r=p^t$.
\end{thm}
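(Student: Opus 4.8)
The plan is to show two things: first that a non-zero nilpotent derivation forces positive characteristic, and second that the nilpotency index must be a power of $p$. For the first part, I would argue by contraposition. If $\mathrm{char}(K)=0$ and $D$ is a non-zero derivation, I want to show $D$ cannot be nilpotent. Pick $x\in K$ with $D(x)=y\neq0$. The key computation is the Leibniz rule applied to powers: one checks by induction that $D(x^n)=nx^{n-1}y$ (more carefully, the behaviour of $D$ on $x^n$ involves binomial-type coefficients, but the leading behaviour under iteration is what matters). More robustly, I would iterate $D$ on a single element and track that the coefficients that appear are integers that never vanish in characteristic zero, so no power of $D$ can annihilate everything. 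This shows nilpotency is impossible in characteristic $0$, hence $\mathrm{char}(K)=p\neq0$.

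For the second part, that $r=p^t$, the natural approach is to use the formula for iterates of a derivation together with the interaction between $D^r$ and products. Suppose $D^r=0$ but $D^{r-1}\neq0$, and write $r$ in terms of its $p$-adic structure. The tool I would reach for is the identity expressing $D^r(fg)$ via the Leibniz expansion
\begin{equation}
D^r(fg)=\sum_{i=0}^{r}\binom{r}{i}D^i(f)D^{r-i}(g).
\end{equation}
If $r$ is \emph{not} a power of $p$, then not all the intermediate binomial coefficients $\binom{r}{i}$ for $0<i<r$ vanish modulo $p$ (by Lucas' theorem, $\binom{r}{i}\equiv 0 \pmod p$ for all $0<i<r$ precisely when $r$ is a power of $p$). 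Choosing $f,g$ with $D^{r-1}(f)\neq 0$ and comparing, one derives that some $D^i D^{r-i}$ combination with $i,r-i<r$ must be non-zero, which can be leveraged to produce an element not annihilated by $D^r$, contradicting $D^r=0$. Thus $r$ must be a power of $p$.

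The main obstacle I anticipate is making the second step genuinely rigorous rather than merely suggestive: the vanishing of binomial coefficients tells us which Leibniz terms survive, but I must convert this into an actual contradiction with $D^r=0$. The clean way to do this is to exploit that $D^{r-1}\neq 0$ gives an element $a$ with $b:=D^{r-1}(a)\neq 0$, and then to evaluate $D^r$ on a suitable product (for instance $a^2$ or $a\cdot c$ for a well-chosen $c$) so that the surviving cross-terms, whose coefficients are the non-vanishing $\binom{r}{i}$, cannot all cancel. This requires care because the various $D^i(a)$ need not be independent, so one may instead work with the minimal $m$ such that $D^m=0$ and argue that $m$ being non-$p$-power contradicts the multiplicativity constraints encoded in the module-algebra / product formula. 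An alternative, possibly cleaner, route is to pass to the subfield $K^{D^{p^t}}$ for the largest $p^t\le r$ and analyze the induced derivation on the quotient structure, reducing to the case where the index lies strictly between consecutive powers of $p$ and deriving a contradiction there.
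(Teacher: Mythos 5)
The paper itself does not prove this statement---it simply cites \cite[Theorem 2]{Smits}---so your proposal has to be judged against what a complete proof requires, and as written it has a genuine gap in each half. In the characteristic-zero half, the computation $D(x^n)=nx^{n-1}y$ is only the first step: iterating $D$ brings in $D(y), D^2(y),\dots$, which you do not control, so the claim that ``the coefficients that appear are integers that never vanish'' is not an argument (sums of such terms could a priori cancel). The repair is to normalize first, using nilpotency: if $r\geq2$ is the index, pick $a$ with $D^{r-1}(a)\neq0$ and set $u=D^{r-2}(a)/D^{r-1}(a)$; since $D(D^{r-1}(a))=D^r(a)=0$, the quotient rule gives $D(u)=1$, hence $D^k(u^m)=m(m-1)\cdots(m-k+1)\,u^{m-k}$ with no error terms, and $D^r(u^r)=r!\neq0$ contradicts $D^r=0$. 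In the second half, the step you yourself flag as the main obstacle is exactly what is missing: knowing that some $\binom{r}{i}\not\equiv0 \pmod p$ does not by itself contradict $\sum_{i=1}^{r-1}\binom{r}{i}D^i(f)D^{r-i}(g)=0$, because for the choices you suggest ($f=g=a$, or $a^2$) several terms survive and nothing prevents them from cancelling; none of your proposed repairs is carried out.

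The gap can be closed, and along the very lines you suggest, by choosing $f$ and $g$ on the Jordan chain of a single element so that \emph{exactly one} Leibniz term survives. Fix $a$ with $D^{r-1}(a)\neq0$, and for each $0<i<r$ put $f=D^{r-1-i}(a)$ and $g=D^{i-1}(a)$. Then $D^{j}(f)=0$ for $j>i$ while $D^i(f)=D^{r-1}(a)\neq0$, and $D^{r-j}(g)=0$ for $j<i$ while $D^{r-i}(g)=D^{r-1}(a)\neq0$. Hence in
\[
0=D^r(fg)=\sum_{j=0}^{r}\binom{r}{j}D^j(f)\,D^{r-j}(g)
\]
every term except $j=i$ vanishes, leaving $\binom{r}{i}\bigl(D^{r-1}(a)\bigr)^2=0$; since $K$ is a field, $\binom{r}{i}=0$ in $K$ for \emph{every} $0<i<r$. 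This single computation proves the whole theorem: in characteristic $0$ it is absurd (so your first part becomes unnecessary), and in characteristic $p$ it says all intermediate binomial coefficients vanish mod $p$, which by Lucas' theorem (the direction you quoted) forces $r=p^t$. Alternatively, there is a shorter structural route implicit in the paper's own Lemma \ref{onepreimage}: in characteristic $p$ one has $K^p\subseteq K^D$, so $K/K^D$ is purely inseparable; $D$ is a nilpotent $K^D$-linear operator on $K$ whose kernel $K^D$ is one-dimensional over $K^D$, so $K$ is a single Jordan block and $[K:K^D]=r$; a finite purely inseparable extension has degree a power of $p$, whence $r=p^t$.
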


\begin{proof}
   See \cite[Theorem 2]{Smits}.
\end{proof}

\begin{defi}[Order of a derivation]
    Let $D$ be a derivation on a field $K$. We say that $D$ has \textit{order} $r$ if it is nilpotent of index $r$.
\end{defi}

By Theorem \ref{smitsthm} we have that, if $p$ is a prime number and $t\geq1$, a derivation $D$ on a field $K$ has order $p^t$ if and only if $D^{p^{t-1}}\neq 0$ and $D^{p^t}=0$. From now on $K$ will be a field of characteristic $p>0$ such that $K/K^p$ is finite (this is the case for example if we take $K$ to be the function field of a variety over a perfect field).

\begin{lemma}\label{onepreimage}
    Let $D$ be a derivation on $K$ of order $p^n$. Then the field extension $K/K^D$ has order $p^n$. Moreover, there exists $t\in K$ such that $D(t)=1$ and $\Ima (D^i)=\ker (D^{p^{n}-i})$ for any $i=1,\dots, p^{n}$.
\end{lemma}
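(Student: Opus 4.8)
The plan is to exploit the key structural fact from Theorem \ref{smitsthm}, namely that $D$ nilpotent of order $p^n$ forces $D^{p^n}=0$ but $D^{p^n-1}\neq 0$, together with the finiteness hypothesis $[K:K^p]<\infty$. I would first establish the existence of $t$ with $D(t)=1$. Since $D^{p^n}=0$ but $D^{p^n-1}\neq 0$, pick $y\in K$ with $z:=D^{p^n-1}(y)\neq 0$; then $z\in K^D$ because $D(z)=D^{p^n}(y)=0$. Now consider $x:=D^{p^n-2}(y)$, which satisfies $D(x)=z\in K^D\setminus\{0\}$. Setting $t:=z^{-1}x$ and using the Leibniz rule together with $D(z^{-1})=0$ (as $z\in K^D$), I get $D(t)=z^{-1}D(x)=z^{-1}z=1$. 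This is the crucial normalization: having a $t$ with $D(t)=1$ lets one run a "Taylor expansion / Jacobson-type" argument.

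Next I would pin down the degree $[K:K^D]=p^n$. The element $t$ is transcendental-like over $K^D$ in the sense that $1,t,\dots,t^{p^n-1}$ are linearly independent over $K^D$: applying $D^i$ to a hypothetical relation and using $D(t)=1$ (so $D^i(t^j)=\frac{j!}{(j-i)!}t^{j-i}$ when $i\le j$, adapted to characteristic $p$ via the divided-power operators of Example \ref{alphap}) one peels off coefficients one at a time and forces them to vanish, giving $[K:K^D]\geq p^n$. For the reverse inequality, note $t^{p^n}\in K^D$ since $D(t^{p^n})=p^n t^{p^n-1}D(t)=0$, and more structurally $K^D$ contains $K^p$; combined with $[K:K^p]$ finite and the fact that $D$ induces a filtration of $K$ by $\ker(D^i)$ whose successive quotients are one-dimensional over $K^D$ (because multiplication by suitable powers of $t$ identifies them), one concludes $[K:K^D]\le p^n$. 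Hence equality. I would phrase this cleanly by showing $K=\bigoplus_{i=0}^{p^n-1}K^D t^i$ directly, which simultaneously yields the degree and the basis.

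For the final claim $\Ima(D^i)=\ker(D^{p^n-i})$, the inclusion $\Ima(D^i)\subseteq\ker(D^{p^n-i})$ is immediate since $D^{p^n-i}\circ D^i=D^{p^n}=0$. For the reverse, I would use the explicit decomposition $K=\bigoplus_{j=0}^{p^n-1}K^D t^j$: on this basis $D$ acts (via the divided-power formalism) as a weighted shift lowering the $t$-degree, so both $\ker(D^{p^n-i})$ and $\Ima(D^i)$ are spanned by an explicit set of $K^D$-multiples of powers of $t$, and a dimension count over $K^D$ (both have dimension $i\cdot[K:K^D]/p^n$-type count, i.e. the same codimension) forces equality. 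Alternatively, a cleaner route: since $\dim_{K^D}\ker(D^{p^n-i})=p^n-i$ worth of "levels" and the rank-nullity style bookkeeping on the finite filtration matches, equality follows.

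The main obstacle I anticipate is the characteristic-$p$ subtlety in the "Taylor expansion" step: the naive powers $D^i$ do not behave like $i!$ times a divided power, so one cannot directly write $D=d/dt$ and integrate. I would handle this by working with the full filtration $0\subsetneq\ker D\subsetneq\ker D^2\subsetneq\cdots\subsetneq\ker D^{p^n}=K$ and proving each step has degree exactly $p$... no, rather each successive quotient is $1$-dimensional over $K^D$, using $t$ to produce explicit coset representatives, thereby sidestepping factorials entirely. Getting this filtration argument to yield both linear independence of $\{t^i\}$ and the exact count $[K:K^D]=p^n$ cleanly is the delicate part; once the direct sum decomposition $K=\bigoplus_{i=0}^{p^n-1}K^D t^i$ is in hand, every assertion in the lemma follows formally.
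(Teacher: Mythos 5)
Your construction of $t$ with $D(t)=1$ is correct (and is a nice explicit alternative to reading it off the Jordan form), and the inclusion $\Ima(D^i)\subseteq\ker(D^{p^n-i})$ is fine. But the central pillar of your argument --- that $1,t,\dots,t^{p^n-1}$ are linearly independent over $K^D$, hence $K=\bigoplus_{i=0}^{p^n-1}K^Dt^i$ --- is false whenever $n\geq 2$. Since $D(t^p)=pt^{p-1}D(t)=0$, already $t^p\in K^D$, so the subfield $K^D(t)=K^D[t]$ has degree at most $p$ over $K^D$; a single element $t$ with $D(t)=1$ can never generate the degree-$p^n$ extension $K/K^D$. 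Concretely, take $K=\F_p(x,y)$ and $D=\partial_x+x^{p-1}\partial_y$, for which $D^p=-\partial_y$ and $D$ has order $p^2$: here $K^D=K^p=\F_p(x^p,y^p)$, the element $t=x$ satisfies $D(t)=1$, but $K^D(t)=\F_p(x,y^p)$ has degree $p$, not $p^2$, and does not contain $y$. The same failure infects your justification of the upper bound and of $\ker(D^{p^n-i})\subseteq\Ima(D^i)$: multiplication by $t$ does not identify the successive quotients $\ker(D^{i+1})/\ker(D^i)$. Indeed, by Leibniz, $D^i(tx)=tD^i(x)+iD^{i-1}(x)=iD^{i-1}(x)$ for $x\in\ker(D^i)$, so this map vanishes on the quotient whenever $p\mid i$ --- exactly the characteristic-$p$ obstruction you flagged at the end but did not actually circumvent.

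The correct argument (and the paper's) sidesteps $t$ entirely: view $D$ as a nilpotent $K^D$-linear endomorphism of $K$. Its kernel is $K^D\cdot 1$, which is one-dimensional over $K^D$; equivalently, $D$ induces injections $\ker(D^{i+1})/\ker(D^i)\hookrightarrow\ker(D^i)/\ker(D^{i-1})$, so every successive quotient of the kernel filtration has dimension at most one, and each is nonzero for $i<p^n$ because the nilpotency index is exactly $p^n$. Thus $D$ is a single Jordan block of size $p^n$. This simultaneously gives $[K:K^D]=p^n$, the fact that $1\in\ker(D)=\Ima(D^{p^n-1})\subseteq\Ima(D)$ (hence the existence of $t$, though your direct construction also works), and $\Ima(D^i)=\ker(D^{p^n-i})$ via the rank--nullity count $\dim_{K^D}\Ima(D^i)=p^n-i=\dim_{K^D}\ker(D^{p^n-i})$ that you mention only as an aside. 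That dimension count, together with your construction of $t$, is the part of your write-up worth keeping; the decomposition $K=\bigoplus_{i=0}^{p^n-1}K^Dt^i$ and everything resting on it must be discarded.
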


\begin{proof}
We have that $D$ is a nilpotent  $K^D$-linear map with one-dimensional kernel generated by $1$. Therefore there is a unique block in the normal Jordan form of $D$ of size $p^n$, computed with respect to a basis, which we can suppose that contains $1$. This implies that
 $\dim_{K^D}K=p^n$ and there exists $t$ such that $D(t)=1$. Moreover, since there is only one nilpotent Jordan block, it is clear that $\Ima (D^i) = \ker (D^{p^{n}-i})$ for any $i=1,\dots,p^n$.
\end{proof}

\begin{defi}[$p$-basis]
Let $K/L$ be a finite field extension such that $K^p\subseteq L$.
  A \textit{$p$-basis} of $K/L$ is a sequence $(t_1,\dots,t_n)\in K^n$ such that the monomials $t_1^{m_1}\dots t_n^{m_n}$ with $0\leq m_1,\dots,m_n\leq p-1$ form an $L$-basis of $K$.
\end{defi}

\begin{rmk}\label{canbasis}\leavevmode
\begin{enumerate}
    \item Notice that for any $p$-basis $(t_1,\dots,t_n)$ of $K/L$, a derivation $D$ in $\mathrm{Der}_L(K)$ is zero if and only if $D(t_i)=0$ for all $i=1,\dots,n$.
    \item A sequence $(t_1,\dots,t_n)$ is a $p$-basis of $K/L$ if and only if $\{dt_1,\dots,dt_n\}$ is a basis of the $K$-vector space of K\"ahler differentials $\Omega_{K/L}^1$ \cite[V.\textsection13, Theorem 1]{Bourbaki}. Consider the dual basis $\{\partial_1,\dots,\partial_n\}$, which gives a basis of the $K$-vector space of derivations $\mathrm{Der}_L(K)=\Hom_K(\Omega^1_{K/L},K).$ The $\partial_i$'s commute pairwise and satisfy $\partial_i^p=0$ for all $i=1,\dots,n.$ Moreover $\partial_i(t_j)=\delta_{ij}$, where $\delta_{ij}$ is the Kronecker delta.
\end{enumerate}
\end{rmk}

In the following, we construct special $p$-bases that can be obtained any time we have a generically free rational action of an infinitesimal commutative unipotent group scheme of height one on a variety.

\begin{prop}\label{pbasis}
       Let $D_1,\dots,D_n$ be 
    %$K$-%linearly independent 
    derivations on $K$ commuting pairwise. Set $K_0=K$ and $K_{j}=K^{D_1,\dots,D_{j}}$ for all $j=1,\dots,n.$ If $D_i$ has order $p$ on $K_{i-1}$ for all $i=1,\dots,n$ then
    \begin{enumerate}
    \item there exists a $p$-basis $(t_1,\dots,t_{n})$ of $K/K_n$ such that $D_i(t_i)=1$ and $D_i(t_j)=0$ for all $j>i$. Moreover,  $D_i(t_j)$ belongs to $K_j$ for all $i$ and $j$, and
\item   $\{D_1,\dots,D_n\}$ is a basis of $\mathrm{Der}_{K_n}(K)$.
\end{enumerate}
\end{prop}

\begin{proof}\leavevmode
\begin{enumerate}
    \item Consider the tower of extensions $$K_n\subseteq K_{n-1}\subseteq K_{n-2}\subseteq\dots\subseteq K_1\subseteq K.$$ By Lemma \ref{onepreimage}, for every $i=1,\dots,n$ the extension $K_i\subseteq K_{i-1}$ has degree $p$ and there exists $t_i\in K_{i-1}$ such that $D_i(t_i)=1$, so by degree reasons $K_{i-1}=K_i(t_i)$. Therefore the first statement follows. The second statement is a direct consequence of the first one together with the commutativity hypothesis. Indeed, for every $i,j$ and $h\leq j$ we have $$D_h(D_i(t_j))=D_i(D_h(t_j))=D_i(\delta_{hj})=0$$ where $\delta_{hj}$ is the Kronecker delta. Hence $D_i(t_j)$ belongs to $K_j$ as claimed.
    \item In particular, we see that $[K:K_n]=p^n$. As remarked above, $\mathrm{Der}_{K_n}(K)$ has then dimension $n$ over $K$, hence it is enough to show that $D_1,\dots,D_n$ are $K$-linearly independent. Suppose that they are not and take $a_1,\dots,a_n$ in $K$ such that $$a_1D_1+\dots+a_nD_n=0.$$ Let $i_0=\max\{i=1,\dots,n\mid a_i\neq0\}$. Then $$0=(a_1D_1+\dots+a_nD_n)_{|K_{i_0-1}}=a_{i_0}{D_{i_0}}_{|K_{i_0-1}}.$$ By assumption $D_{i_0}$ has order $p$ on $K_{i_0-1}$, so in particular it is different from zero. Thus $a_{i_0}=0$, which gives a contradiction.
\end{enumerate}
\end{proof}

We introduce some notation in order to prove the following Corollary that gives necessary and sufficient conditions for some systems of differential equations to have a solution. It will play a crucial role for the existence of the generically free actions of Theorem \ref{mainthm}. Let $D_1,\dots,D_m$ be differential operators on $K$ commuting pairwise and $a_1,\dots,a_m$ be elements of $K$ such that $$D_i(a_j)=D_j(a_i)$$ for all $i,j=1\dots,m.$ Consider moreover a polynomial $F\in\left( X_1,\dots,X_m\right) k[X_1,\dots,X_m]$ and write $$F=X_1Q_1+\dots+X_mQ_m.$$ We define the differential operator $$\widetilde{F}(a_1,\dots,a_m):=\sum_{i=1}^ma_iQ_i(D_1,\dots,D_m).$$
Notice that it does not depend on the choice of the $Q_i$'s since $D_i(a_j)=D_j(a_i)$ for every $i,j.$

\begin{corollary}\label{systemsol}
   Let $D_1,\dots,D_m$ be differential operators on $K$ as above and set $K_0=K$ and $K_j=K^{D_1,\dots,D_j}$ for any $j=1,\dots,m$. Suppose moreover that $D_i$ is a derivation of order $p^{l_i}$ on $K_{i-1}$ for any $i=1,\dots,m$ and that $$D_i^{p^{l_i}}=F_i(D_1,\dots,D_{i-1})$$ for some polynomial $F_i\in(X_1,\dots,X_{i-1})k[X_1,\dots,X_m].$ Then the system $$\left\{
        \begin{array}{c}
             D_1(x)=a_1 \\
             \vdots\\
             D_m(x)=a_m
        \end{array}\right.
    $$ admits a solution in $K$, which is unique modulo $K_m$, if and only if $$D_i^{p^{l_i}-1}(a_i)=\widetilde{F}_i(a_1,\dots,a_{i-1})$$ for every $i=1,\dots,m.$
\end{corollary}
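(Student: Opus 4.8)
The plan is to prove the corollary by induction on $m$, reducing at each step to solving a single differential equation governed by $D_m$ and then lifting the solution through the tower of fields. The key structural fact I would exploit is Lemma \ref{onepreimage}: since $D_i$ has order $p^{l_i}$ on $K_{i-1}$, we have $\Ima(D_i^j) = \ker(D_i^{p^{l_i}-j})$ on $K_{i-1}$, so an equation $D_i(x) = a_i$ is solvable on $K_{i-1}$ precisely when $a_i \in \Ima(D_i) = \ker(D_i^{p^{l_i}-1})$, i.e. when $D_i^{p^{l_i}-1}(a_i) = 0$. The twist here is that $D_i^{p^{l_i}}$ is not zero on all of $K$ but equals $F_i(D_1,\dots,D_m)$, which is why the solvability condition becomes $D_i^{p^{l_i}-1}(a_i) = \widetilde{F}_i(a_1,\dots,a_m)$ rather than simply vanishing.

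First I would treat the base case $m=1$ directly: the single equation $D_1(x)=a_1$ with $D_1^{p^{l_1}} = F_1(D_1)$ on $K$. Here I would decompose $K$ as a $K_1$-vector space using the Jordan structure of $D_1$ and check that solvability is equivalent to $a_1$ lying in the image of $D_1$, which the relation $D_1^{p^{l_1}} = F_1(D_1)$ translates into exactly the stated condition $D_1^{p^{l_1}-1}(a_1) = \widetilde{F}_1(a_1)$; uniqueness modulo $K_1 = \ker(D_1)$ is then immediate. For the inductive step, I would first solve the last equation $D_m(x) = a_m$ on $K_{m-1}$ — but $a_m$ need not lie in $K_{m-1}$, so the genuine approach is instead to use the commutativity $D_i(a_j)=D_j(a_i)$ to observe that once a candidate solution $x_0$ is produced for the first $m-1$ equations (by the inductive hypothesis applied on a suitable subfield), the element $a_m - D_m(x_0)$ is annihilated by $D_1,\dots,D_{m-1}$ and hence lies in $K_{m-1}$, reducing the $m$-th equation to a single solvable equation on $K_{m-1}$.

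The main obstacle will be bookkeeping the compatibility between the solutions at different stages: I must check that the correction needed to solve $D_m(x)=a_m$ does not destroy the already-solved equations $D_1(x)=a_1,\dots,D_{m-1}(x)=a_{m-1}$. This is where commutativity of the $D_i$ is essential — if $y \in K_{m-1}$ is the correction term solving $D_m$, then $D_i(y) = 0$ for $i<m$ automatically preserves the earlier equations. The delicate point is translating the abstract solvability criterion ``$a_m - D_m(x_0) \in \Ima(D_m|_{K_{m-1}})$'' into the explicit polynomial identity $D_m^{p^{l_m}-1}(a_m) = \widetilde{F}_m(a_1,\dots,a_m)$; this requires carefully applying the relation $D_m^{p^{l_m}} = F_m(D_1,\dots,D_m)$ and using that $D_i(x_0) = a_i$ to rewrite $F_m(D_1,\dots,D_m)(x_0)$ in terms of the $a_i$, which is precisely the definition of $\widetilde{F}_m$. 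I would verify this rewriting is independent of the chosen expression $F_m = \sum X_i Q_i$ exactly as noted in the setup, using $D_i(a_j) = D_j(a_i)$. Finally, uniqueness modulo $K_m = \bigcap_i \ker(D_i)$ follows since any two solutions differ by an element killed by all the $D_i$.
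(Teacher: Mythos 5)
Your proposal is correct and follows essentially the same route as the paper's proof: recursion on the number of equations, with Lemma \ref{onepreimage} giving solvability of each new equation $D_{i+1}(y)=a_{i+1}-D_{i+1}(x_i)$ over $K_i$, the commutativity and the compatibility $D_j(a_{i+1})=D_{i+1}(a_j)$ ensuring this right-hand side lies in $K_i$, necessity by evaluating $D_i^{p^{l_i}}$ on a solution, and uniqueness modulo $K_m$ by linearity. The ``delicate point'' you flag --- rewriting $F_m(D_1,\dots,D_m)(x_0)$ as $\widetilde{F}_m(a_1,\dots,a_m)$ when $x_0$ solves only the first $m-1$ equations --- is exactly the step the paper asserts without comment (its equality $\widetilde{F}_{i+1}(a_1,\dots,a_m)-D_{i+1}^{p^{l_{i+1}}}(x_i)=0$), and it genuinely requires a decomposition $F_m=\sum_{j<m}X_jQ_j$ pulling out only variables of index $<m$ (automatic in the paper's application, where each $F_i$ involves only the operators preceding $D_i$), since $D_m(x_0)$ need not equal $a_m$.
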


\begin{proof}
    Suppose that there exists $z\in K$ solution of the above system: then $$D_i^{p^{l_i}-1}(a_i)=D_i^{p^{l_i}}(z)=F_i(D_1,\dots,D_{i-1})(z)=\widetilde{F}_i(a_1,\dots,a_{i-1})$$
    for every $i=1,\dots,m.$ For the other way around, notice that the uniqueness modulo $K_m$ of the solution is clear by the additivity of differential operators: if $x$ and $y$ are both solutions of the system, then $0=D_i(x)-D_i(y)=D_i(x-y)$ for all $i=1,\dots,m$, meaning that the two solutions differ by an element of $K_m$. Let us prove its existence by induction. Let $S_i$ be the system given by just the first $i$ lines for any $i=1,\dots,m$. 
    Let us show that if $S_i$ has a solution $x_i$, then $S_{i+1}$ has a solution. Any solution of $S_i$ is of the form $x_i+y_i$ with $y_i\in K_i$, therefore we wish to find such an element satisfying $$D_{i+1}(x_i+y_i)=a_{i+1}.$$ This equation is satisfied if and only if $$D_{i+1}(y_i)=a_{i+1}-D_{i+1}(x_i).$$ For every $j=1,\dots,i$ we have that $$D_j(D_{i+1}(x_i))=D_{i+1}(D_j(x_i))=D_{i+1}(a_j)=D_j(a_{i+1}),$$ that is $a_{i+1}-D_{i+1}(x_i)$ lies in $K_i$. Moreover,  $$D_{i+1}^{p^{l_{i+1}}-1}(a_{i+1}-D_{i+1}(x_i))=D_{i+1}^{p^{l_{i+1}}-1}(a_{i+1})-D_{i+1}^{p^{l_{i+1}}}(x_i)=$$$$\widetilde{F}_{i+1}(a_1,\dots,a_i)-D_{i+1}^{p^{l_{i+1}}}(x_i)=0$$ and thus by Lemma \ref{onepreimage} there exists the solution $y_i$ in $K_i$ we were looking for.
\end{proof}

\section{Generically free rational actions}\label{genfreeactions}

We begin this section with a useful criterion in order to determine when an action of an infinitesimal group scheme is generically free. An analogous criterion will be given later on for faithful actions of finite group schemes (see Proposition \ref{ifffaithful}). 

\begin{prop}\label{kerfrob}
Let $G$ be an infinitesimal $k$-group scheme and $X$ an irreducible $k$-scheme endowed with a $G$-action. Then:
    \begin{enumerate}
        \item the $G$-action is generically free if and only if the induced $\ker(F_G)$-action is generically free;
        \item if in addition $k$ is perfect and $G$ is commutative, the $G$-action  is generically free if and only if the induced action of $\soc(G)$ is generically free.
    \end{enumerate}
\end{prop}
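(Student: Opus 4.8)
The plan is to reduce the freeness of an action at the generic point to the freeness of its restriction to smaller subgroup schemes, exploiting the stabilizer formula from Remark \ref{stabsgrp} and the structural results on socles and Frobenius kernels established earlier. Throughout I work with the generic point $\eta$ of $X$, and by Proposition \ref{bschngenfree} I may base change to $\overline{k}$ and assume $k$ is algebraically closed; note that by Lemma \ref{diag}(4) and the compatibility of Frobenius kernels with base change, both $\soc(G)$ and $\ker(F_G)$ behave well under this reduction.

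\textbf{Proof of (1).} The forward implication is immediate: if the $G$-action is generically free then $\stab_G(\eta)$ is trivial, and by Remark \ref{stabsgrp} applied to $H=\ker(F_G)\subseteq G$ we have $\stab_{\ker(F_G)}(\eta)=\stab_G(\eta)\times_{G_{k(\eta)}}\ker(F_G)_{k(\eta)}$, which is then also trivial. For the converse, suppose the $\ker(F_G)$-action is generically free, i.e.\ $\stab_{\ker(F_G)}(\eta)$ is trivial. Since $G$ is infinitesimal, so is the stabilizer $S:=\stab_G(\eta)$, hence triviality of $S$ is equivalent to triviality of $\Lie(S)$. By Remark \ref{liefrobkern}, $\Lie(S)=\Lie(\ker(F_S))$, and $\ker(F_S)=\ker(F_G)\times_{G_{k(\eta)}}S=\stab_{\ker(F_G)}(\eta)$ using again Remark \ref{stabsgrp} together with the functoriality of $F$ and its compatibility with fibre products. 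Thus $\Lie(S)=\Lie(\stab_{\ker(F_G)}(\eta))$, which is trivial by hypothesis, so $S$ is trivial and the $G$-action is generically free.

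\textbf{Proof of (2).} With $k$ now perfect and $G$ commutative, the forward implication again follows from Remark \ref{stabsgrp} with $H=\soc(G)$. For the converse, suppose the $\soc(G)$-action is generically free. By part (1) it suffices to show the $\ker(F_G)$-action is generically free, so I may replace $G$ by $\ker(F_G)$ and assume $G$ has height one; note that by Lemma \ref{soc1}(5) this does not change the socle, since $\soc(G)=\soc(\ker(F_G))$ in the commutative case. Writing $H:=\stab_G(\eta)$, which is an infinitesimal (commutative) $k$-subgroup scheme of $G_{k(\eta)}$, I want to deduce that $H$ is trivial from the triviality of $\stab_{\soc(G)}(\eta)=H\times_{G_{k(\eta)}}\soc(G)_{k(\eta)}=\soc(G)\times_G H$ (using Lemma \ref{soc1}(4), valid since $G$ is commutative, identifying the stabilizer of $\eta$ in $\soc(G)$ with $\soc(H)$). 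The key point is Lemma \ref{soc1}(3): for any \emph{non-trivial} subgroup scheme $H$ of the finite commutative group scheme $G$, the intersection $\soc(G)\times_G H$ is non-trivial. Hence if $\soc(G)\times_G H$ is trivial then $H$ itself must be trivial, giving generic freeness of the $G$-action.

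\textbf{Main obstacle.} The crux is the converse of (2), and specifically the justification that triviality of the socle-stabilizer forces triviality of the full stabilizer $H$. The cleanest route is to identify $\stab_{\soc(G)}(\eta)$ with $\soc(\stab_G(\eta))$: by Remark \ref{stabsgrp} and Lemma \ref{soc1}(4) (both requiring commutativity), $\stab_{\soc(G)}(\eta)=\soc(G)\times_G H=\soc(H)$, and then Lemma \ref{soc1}(1) says $\soc(H)$ is trivial iff $H$ is trivial. I expect the delicate bookkeeping to lie in verifying that $H$ is genuinely a subgroup scheme of $G_{k(\eta)}$ to which the socle formalism applies (it is infinitesimal and commutative, being a stabilizer inside the commutative $G$), and in confirming that base change to $\overline{k}$ is harmless for the socle via Lemma \ref{diag}(4). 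These are routine once the identification $\stab_{\soc(G)}(\eta)=\soc(\stab_G(\eta))$ is in place, so the whole argument hinges on invoking the correct parts of Lemma \ref{soc1} in the commutative setting.
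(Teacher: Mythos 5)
Your proposal is correct and follows essentially the same route as the paper: part (1) rests on the identification $\stab_{\ker(F_G)}(\eta)=\ker\bigl(F_{\stab_G(\eta)}\bigr)$ (your Lie-algebra detour via Remark \ref{liefrobkern} is just a rephrasing of the paper's observation that a non-trivial infinitesimal group scheme has non-trivial Frobenius kernel), and part (2) rests on identifying $\stab_{\soc(G)}(\eta)$ with $\soc(\stab_G(\eta))$ and invoking Lemma \ref{soc1}. One step, however, needs sharpening. In part (2) you justify
$\stab_{\soc(G)}(\eta)=H\times_{G_{k(\eta)}}\soc(G)_{k(\eta)}=\soc(H)$
by Lemma \ref{soc1}(4) alone; but that lemma, applied over the field $k(\eta)$ to the subgroup $H\subseteq G_{k(\eta)}$, gives $\soc(H)=\soc\bigl(G_{k(\eta)}\bigr)\times_{G_{k(\eta)}}H$, and one must still replace $\soc\bigl(G_{k(\eta)}\bigr)$ by $\soc(G)_{k(\eta)}$. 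This compatibility of the socle with the transcendental extension $k(\eta)$ of the base field is exactly Lemma \ref{diag}(4), which requires trigonalizability; this, and not mere convenience, is why the reduction to $\overline{k}$ matters, since over $\overline{k}$ a commutative infinitesimal group scheme is trigonalizable and Lemma \ref{diag}(4) then applies to the extension $k(\eta)/\overline{k}$. The paper makes precisely this step explicit in its chain of equalities, whereas you cite Lemma \ref{diag}(4) only for the reduction $k\to\overline{k}$; as written the step is missing, though it is a one-line fix available inside your own setup, since you did carry out that reduction. Two cosmetic remarks: the reduction to height one in part (2) via part (1) and Lemma \ref{soc1}(5) is correct but unnecessary (the socle argument does not care about the height, and the paper works with $G$ directly), and in part (1) no base change to $\overline{k}$ is needed at all.
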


\begin{proof} \leavevmode
    \begin{enumerate}
        \item Clearly if the $G$-action is generically free then also the induced $\ker(F_G)$-action is generically free. Suppose that the $G$-action on $X$ is not generically free. Let $\eta$ be the generic point of $X$ and $K=k(\eta)$. Then $\stab_G(\eta)$ is a non-trivial subgroup scheme of $G_K$ and thus $$ \ker(F_{G_K})\times_{G_K}\stab_G(\eta)= \ker(F_{G})_K\times_{G_K}\stab_G(\eta)\stackrel{\ref{stabsgrp}}{=}\stab_{\ker(F_G)}(\eta)$$ is non-trivial. 
    Henceforth the action of $\ker(F_G)$ on $X$ is not generically free.
        \item Clearly if the $G$-action is generically free then also the induced $\soc(G)$-action is generically free. For the other way around, by Proposition \ref{bschngenfree} and Lemma \ref{diag} we may suppose that $k=\overline{k}$. Then $G$ is trigonalizable.
        Suppose by contradiction that the $G$-action on $X$ is not generically free. Let $\eta$ be the generic point of $X$ and $K=k(\eta)$. Then $\stab_G(\eta)$ is a non-trivial subgroup scheme of $G_K$ and thus $$ \soc(\stab_G(\eta))\stackrel{\ref{soc1}}{=}\soc(G_K)\times_{G_K}\stab_G(\eta)\stackrel{\ref{diag}}{=}$$$$\soc(G)_K\times_{G_K}\stab_G(\eta)\stackrel{\ref{stabsgrp}}{=}\stab_{\soc(G)}(\eta)$$ is non-trivial by Lemma \ref{soc1}. Henceforth the action of $\soc(G)$ on $X$ is not generically free which gives a contradiction.
    \end{enumerate}
\end{proof}

Recall the following definition.

\begin{defi}[Solvable group scheme]
    A $k$-group scheme $G$ is said to be $k$-\textit{solvable} if it is affine and it admits a composition series with quotients isomorphic either to $\G_{a,k}$ or to $\G_{m,k}$ (see for example \cite[IV.\textsection4, Definition 3.1]{DG}).
\end{defi}

Remark that every $k$-solvable group scheme is smooth and connected.

\begin{prop}\label{solvablegrps}
    Let $G$ be a $k$-group scheme.
    \begin{enumerate}
        \item If $G$ is $k$-solvable, then $G$ is trigonalizable and its maximal unipotent $k$-subgroup scheme $G^u$ is $k$-solvable. Moreover $G$ is isomorphic as a $k$-scheme to $\G_{m,k}^{n-r}\times_k\G_{a,k}^r$ where $n=\dim(G)$ and $r=\dim(G^u)$.
        \item If $k$ is perfect and $G$ is trigonalizable, smooth and connected, then $G$ is $k$-solvable.
    \end{enumerate}
\end{prop}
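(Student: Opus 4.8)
The plan is to treat the two statements separately, taking the structure theory of Demazure--Gabriel \cite{DG} as the main external input and organizing everything around the unipotent/multiplicative dichotomy visible in a composition series. Throughout, write $1\to N\to G\to Q\to 1$ for the top step of a composition series of a $k$-solvable $G$, with $Q\in\{\G_{a,k},\G_{m,k}\}$ and $N$ $k$-solvable of strictly smaller length.

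For (1), the first point is that a $k$-solvable group is automatically smooth and connected: arguing by induction on the length, $N$ and $Q$ are smooth and connected, and both properties pass to the extension, since $G\to Q$ is an fppf $N$-torsor (hence a smooth surjection with smooth base and connected fibres $\cong N$). Trigonalizability then follows by a second induction on the length: I would show that $G$ admits a closed immersion into the group $T_m$ of invertible upper triangular matrices, which characterizes trigonalizable groups \cite[IV.\textsection2]{DG}; the inductive step extends the embedding of $N$ by one coordinate, a strictly-upper-triangular (unipotent) direction when $Q=\G_{a,k}$ and a diagonal direction when $Q=\G_{m,k}$. Collecting the $\G_{a,k}$-steps into a normal unipotent subgroup $G^u$ and the $\G_{m,k}$-steps into the quotient, one sees that $G^u$ inherits a composition series with only $\G_{a,k}$ quotients, so $G^u$ is itself $k$-solvable, with $r=\dim G^u$ equal to the number of $\G_{a,k}$-steps and $n-r$ the number of $\G_{m,k}$-steps, while $G/G^u$ is a split torus $\G_{m,k}^{n-r}$ (being smooth, connected and diagonalizable).

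For the scheme-theoretic isomorphism I would refine the filtration $1\triangleleft G^u\triangleleft G$ so that all $\G_{a,k}$-steps lie below all $\G_{m,k}$-steps. Since $G^u$ is $k$-solvable with $\G_{a,k}$ quotients only, it is an iterated $\G_{a,k}$-torsor over affine bases; as $\G_{a,k}$-torsors over an affine scheme are trivial (they are classified by $H^1(U,\mathcal{O}_U)=0$ for $U$ affine), this forces $G^u\cong\A^r_k$ as a $k$-scheme. The same vanishing applied to the $G^u$-torsor $G\to G/G^u\cong\G_{m,k}^{n-r}$, whose base is affine, shows it is trivial, whence $G\cong \G_{m,k}^{n-r}\times_k\A^r_k=\G_{m,k}^{n-r}\times_k\G_{a,k}^r$ as $k$-schemes. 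The main obstacle is precisely this upgrade from the existence of a composition series to a global scheme splitting, and the device that makes it work is putting the unipotent (hence affine-space) part at the bottom so that every torsor in sight has an affine base.

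For (2), assume $k$ perfect and $G$ trigonalizable, smooth and connected. The decomposition theorem of the excerpt gives a short exact sequence $1\to G^u\to G\to G^d\to 1$ with $G^u$ normal unipotent and $G^d=G/G^u$ diagonalizable. As a quotient of a smooth connected group, $G^d$ is smooth and connected, so its character group is finitely generated and torsion-free and $G^d\cong\G_{m,k}^{n-r}$ is a split torus, which manifestly has a composition series with quotients $\G_{m,k}$. The kernel $G^u$ is smooth, being the identity fibre of the smooth quotient map $G\to G^d$, and connected; thus it is a smooth connected unipotent group over a \emph{perfect} field, and by the structure theorem for such groups \cite[IV.\textsection2]{DG} it is $k$-split, i.e.\ admits a composition series with all quotients $\G_{a,k}$. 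Concatenating the $\G_{a,k}$-series of $G^u$ with the pullback of the $\G_{m,k}$-series of $G^d$ yields a composition series of $G$ with every successive quotient equal to $\G_{a,k}$ or $\G_{m,k}$, so $G$ is $k$-solvable. The perfectness hypothesis enters exactly at the splitting of $G^u$: over an imperfect field a smooth connected unipotent group need not be $k$-split, and the conclusion can fail.
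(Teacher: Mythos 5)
The paper does not actually prove this proposition; it quotes it from \cite[IV.\textsection 4, Proposition 3.4 and Corollary 3.8]{DG}, so you are attempting to reprove the cited result from scratch. Your architecture (induction on the length of the composition series, torsor triviality over affine bases, splitness of smooth connected unipotent groups over a perfect field) is the right one, but the two steps carrying the real weight are asserted rather than proved. In part (1), the sentence ``the inductive step extends the embedding of $N$ by one coordinate'' is not an argument: given $1\to N\to G\to Q\to 1$ with $Q\in\{\G_{a,k},\G_{m,k}\}$ and a closed immersion $N\hookrightarrow T_m$, there is no automatic way to extend it to $G$ --- the extension need not split, and the conjugation action of $G$ on $N$ has no reason to preserve the flag defining the embedding of $N$. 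The actual content (which is what \cite{DG} prove) is: (i) $N^u$ is the maximal normal unipotent subgroup of $N$, hence characteristic in $N$ and therefore normal in $G$; (ii) by rigidity of diagonalizable groups, the connected group $G$ acts trivially by conjugation on $N/N^u$, so $N/N^u$ is central in $G/N^u$; (iii) central extensions of $\G_a$ or $\G_m$ by a diagonalizable group are commutative (bihomomorphisms $Q\times Q\to\G_m$ vanish) and split (vanishing of the relevant $\mathrm{Ext}^1$). The same machinery is used silently again when you ``collect the $\G_{a,k}$-steps into a normal unipotent subgroup'': a composition series may interleave $\G_a$- and $\G_m$-steps, and permuting them requires exactly these splitting statements (e.g.\ that every extension $1\to\G_{m,k}\to H\to\G_{a,k}\to1$ is isomorphic to $\G_{m,k}\times_k\G_{a,k}$). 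As written, part (1) assumes what it must prove.

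In part (2) there is a genuine error, not just an omission: the claim that $G^u$ is smooth ``being the identity fibre of the smooth quotient map $G\to G^d$'' is circular, since $G\to G^d$ is a $G^u$-torsor and hence is a smooth morphism \emph{if and only if} $G^u$ is smooth; in characteristic $p$ a faithfully flat surjection between smooth groups need not be smooth (relative Frobenius of $\G_a$, or $p\colon\G_m\to\G_m$, whose kernel is $\mu_p$). Connectedness of $G^u$ is asserted with no justification at all, and it cannot be waved away: if $G^u$ contained a component or an infinitesimal piece such as $\alpha_p$ that survives in the series, the concatenation at the end would not produce quotients isomorphic to $\G_{a,k}$. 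The fact is true but needs an argument; for instance, choose a maximal $k$-torus $S\subseteq G$ (Grothendieck), note $S\cap G^u$ is both unipotent and of multiplicative type hence trivial, and a dimension count over $\overline{k}$ (using $G_{\overline{k}}=T'\ltimes R_u(G_{\overline{k}})$ for solvable smooth connected groups) shows $S\to G^d$ is an isomorphism, so $G\simeq G^u\rtimes S$ and smoothness and connectedness of $G^u$ descend from $G$ along the projection; alternatively, over perfect $k$, identify $G^u$ with the unipotent radical of $G_{\overline{k}}$ by Galois descent. Only after this is your appeal to splitness of smooth connected unipotent groups over a perfect field legitimate. Finally, a minor imprecision in (1): the intermediate maps are torsors under the subgroups $G_{i}$ of the series, not under $\G_{a,k}$ itself, so triviality of torsors over affine bases must be propagated by d\'evissage (exact sequences of pointed cohomology sets) from $H^1(U,\G_a)=0$ to torsors under any $k$-split unipotent group; with that said, the conclusion $G\simeq\G_{m,k}^{n-r}\times_k\G_{a,k}^{r}$ as $k$-schemes is correctly derived.
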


\begin{proof}
    See for example \cite[IV.\textsection4, Proposition 3.4 and Corollary 3.8]{DG}.
\end{proof}

The following Proposition proves the existence part of Theorem \ref{mainthm} in the case of commutative trigonalizable group schemes of height one (see Remark \ref{theorem: case height 1}).

\begin{prop}\label{frobkeraction}
Let $\mathcal{G}$ be a $k$-solvable group scheme of dimension $n$, consider $G=\ker(F_\mathcal{G}^s\colon\mathcal{G}\rightarrow\mathcal{G})$ for some $s\geq1$ and let $X$ be a $k$-variety of dimension $\ell$. Then there exist generically free rational actions of $G$ on $X$ if and only if $n\leq \ell.$
\end{prop}

\begin{proof}
Suppose that there exists a generically free rational action of $G$ on $X$. Then, by Proposition \ref{dimlie}, $$n=\dim(\mathcal{G})=\dim_k(\Lie (G))\leq\dim(X)=\ell.$$ For the converse, let us start by proving that any variety $X$ of dimension $\ell$ admits a generically free rational action of $G$ if $n=\ell$.
By Proposition \ref{solvablegrps}, $G$ is a subscheme of $$\mathcal{G}\simeq\G_{m,k}^{n-r}\times_k\G_{a,k}^r$$ where $r=\dim(\mathcal{G}^u)$ and thus there is a natural generically free $G$-action on $\G_{m,k}^{n-r}\times_k\G_{a,k}^r$ by multiplication, since $G$ is a subgroup scheme of $\mathcal{G}$. We then have the $G$-torsor given by the Frobenius $$F^s\colon\G_{m,k}^{n-r}\times_k\G_{a,k}^r\rightarrow\G_{m,k}^{n-r}\times_k\G_{a,k}^r.$$ Let $K=k(X)$ and take any point $x\in\left(\G_{m,k}^{n-r}\times_k\G_{a,k}^r\right)\left(kK^{p^s}\right)$, $$x=(x_1,\dots,x_n)\colon\spec\left(kK^{p^s}\right)\rightarrow\G_{m,k}^{n-r}\times_k\G_{a,k}^r.$$ Then we have a $G$-torsor $$Y_x=\spec\left(kK^{p^s}[T_1,\dots,T_n]/(T_i^{p^s}-x_i)_{i=1,\dots,n}\right)\longrightarrow\spec(kK^{p^s})$$ given by the pull-back diagram  \begin{center} \begin{tikzcd} Y_x\arrow[r]\arrow[d]&\spec(kK^{p^s})\arrow[d,"x"]\\ \G_{m,k}^{n-r}\times_k\G_{a,k}^r\arrow[r,"F^s"]&\G_{m,k}^{n-r}\times_k\G_{a,k}^r.  \end{tikzcd}\end{center} Let $\{y_1,\dots,y_n\}$ be a $p$-basis for $K/kK^p$ and $x=(x_1,\dots,x_n)\in\left(\G_{m,k}^{n-r}\times_k\G_{a,k}^r\right)\left(kK^{p^s}\right)$ be the point with coordinates $x_i=y_i^{p^s}$ for $i=1,\dots,n$. Let us show that there is an isomorphism $$kK^{p^s}[T_1,\dots,T_n]/(T_i^{p^s}-x_i)_{i=1,\dots,n}\simeq K, T_i\mapsto y_i.$$ First of all, let us see that $kK^{p^s}[T_1,\dots,T_n]/(T_i^{p^s}-x_i)_{i=1,\dots,n}$ is a field. We can see this by induction on $n$: in fact, $$kK^{p^s}[T_1]/(T_1^{p^s}-x_1)$$ is a field since $T_1^{p^s}-x_1$ is irreducible in $kK^{p^s}[T_1]$ since $y_1\not\in kK^p.$ Without loss of generality we can then suppose by induction that $$L:=kK^{p^s}[T_1,\dots,T_{n-1}]/(T_i^{p^s}-x_i)_{i=1,\dots,n-1}$$ is a field and consider $$L[T_n]/(T_n^{p^s}-x_n).$$ The polynomial $T_n^{p^s}-x_n$ is irreducible in $L[T_n]$ since $y_n\not\in kK^p$ and thus the claim. Consider the morphism of rings \begin{align*} \psi\colon kK^{p^s}[T_1,\dots,T_n]/(T_i^{p^s}-x_i)_{i=1,\dots,n}&\rightarrow K\\ T_i&\mapsto y_i,\end{align*} then, since the objects are fields, it is an injection and since the two fields have the same degree over $kK^{p^s}$ then $\psi$ is an isomorphism, as wished. Therefore we constructed a $G$-torsor $Y_x=\spec(K)\rightarrow\spec(kK^{p^s})$, that is there exists a generically free rational action of $G$ on $X$, as claimed.
For the general case, consider $$H=\ker(F^s\colon\mathcal{G}\times_k\G_a^{\ell-n}\rightarrow\mathcal{G}\times_k\G_a^{\ell-n}).$$ By what we have just proved, there exists a generically free rational action of $H$ on $X$, since $$\dim(\mathcal{G}\times_k\G_a^{\ell-n})=\ell=\dim(X).$$ Notice that $G=\ker(F^s\colon\mathcal{G}\rightarrow\mathcal{G})$ is a $k$-subgroup scheme of $H$, indeed it is the kernel of the projection $$\pi_2\colon H\rightarrow\G_a^{\ell-n}.$$ As a consequence, there exists also a generically free rational action of $G$ on $X$, as wished.

\end{proof}

\begin{rmk}\label{theorem: case height 1}
If $k$ is perfect and $G$ is a commutative trigonalizable $k$-group scheme of height one, then $$G\simeq\prod_{i=1}^tW_{n_i}^1\times_k\mu_p^l=\ker\left(F\colon\prod_{i=1}^tW_{n_i}\times_k\G_m^l\rightarrow\prod_{i=1}^tW_{n_i}\times_k\G_m^l\right)$$ for some $t,l,n_i\geq1$ (see for example %\textcolor{bibi}{vedi se la ref valeva solo per unipotenti} che la parte diagonalizzabile sia cosi' e' chiaro
\cite[IV.\textsection2, 2.14]{DG}) and $\prod_{i=1}^tW_{n_i}\times_k\G_m^l$ is a $k$-solvable group scheme of dimension equal to $\dim_k(\Lie (G))$. Hence, Proposition \ref{frobkeraction} applies in this case with $n=\dim_k(\Lie (G))$.
\end{rmk}

 The following is an asymptotic result for the dimension of varieties endowed with generically free rational actions of infinitesimal trigonalizable group schemes. This result will be made more precise in the commutative case and over a perfect field with Theorem \ref{mainthm}. 
 
\begin{corollary}\label{asymptotic}
    For every infinitesimal trigonalizable $k$-group scheme $G$ there exists an integer $r>0$ such that for every variety $X$ of dimension $\geq r$ there exist generically free rational actions of $G$ on $X$.
\end{corollary}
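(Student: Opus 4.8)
The plan is to deduce the statement from Proposition~\ref{frobkeraction} by realizing $G$ as a subgroup scheme of the Frobenius kernel of a suitable $k$-solvable group, and then observing that generic freeness of a rational action passes to subgroup schemes. First I would invoke the structure theory of trigonalizable group schemes: since $G$ is trigonalizable there is a closed immersion $G\hookrightarrow T_n$ into the group $T_n$ of invertible upper triangular $n\times n$ matrices, for some $n\geq1$ (the standard characterization of trigonalizability, cf.\ \cite[IV.\textsection2]{DG}). The group $T_n$ is smooth, connected and $k$-solvable: its quotient by the unipotent radical $U_n$ is the split diagonal torus $\G_m^n$, which admits a composition series with quotients $\G_m$, while $U_n$ admits one with quotients $\G_a$; hence $T_n$ has a composition series of the required form and $\dim T_n=n(n+1)/2$.

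Next I would pin down the Frobenius kernel in which $G$ sits. As $G$ is infinitesimal it has finite height, so $F_G^s=0$ for some $s\geq1$. By functoriality of the relative Frobenius, the composite $G\hookrightarrow T_n\stackrel{F_{T_n}^s}{\longrightarrow}T_n^{(p^s)}$ agrees with $G\stackrel{F_G^s}{\longrightarrow}G^{(p^s)}\hookrightarrow T_n^{(p^s)}$, which is trivial; therefore the closed immersion factors through $\ker(F_{T_n}^s)$, giving $G\subseteq\ker(F_{T_n}^s)$. Setting $r:=\dim T_n=n(n+1)/2$ and applying Proposition~\ref{frobkeraction} to the $k$-solvable group $\mathcal{G}=T_n$, for every variety $X$ with $\dim(X)\geq r=\dim T_n$ there exists a generically free rational action of $\ker(F_{T_n}^s)$ on $X$.

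It then remains to restrict this action to $G$. Let $\eta$ be the generic point of $X$ and $K=k(\eta)$; restricting the action on $\spec(K)$ to the subgroup $G\subseteq\ker(F_{T_n}^s)$ yields a rational $G$-action on $X$, and by Remark~\ref{stabsgrp} its stabilizer is $\stab_G(\eta)=\stab_{\ker(F_{T_n}^s)}(\eta)\times_{\ker(F_{T_n}^s)_K}G_K$. Since the larger stabilizer is trivial, so is $\stab_G(\eta)$, and the $G$-action is generically free. The only points requiring care are the embedding $G\hookrightarrow T_n$ and the $k$-solvability of $T_n$; once these are granted the conclusion is immediate, so there is no real obstacle here. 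This is exactly why the statement is only \emph{asymptotic}: the resulting bound $r=\dim T_n$ may be far larger than $\dim_k(\Lie(G))$, and the whole difficulty of the main Theorem~\ref{mainthm} lies in pushing $r$ down to $\dim_k(\Lie(G))$ in the commutative case over a perfect field.
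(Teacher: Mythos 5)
Your proposal is correct and follows essentially the same route as the paper's proof: embed $G$ as a closed subgroup of the smooth $k$-solvable group $T_n$, note that infinitesimality forces $G\subseteq\ker(F_{T_n}^s)$ for some $s$, and apply Proposition~\ref{frobkeraction} with $r=\dim T_n$. Your added details (functoriality of Frobenius for the factorization through $\ker(F_{T_n}^s)$, and Remark~\ref{stabsgrp} to pass generic freeness down to the subgroup $G$) are exactly the steps the paper leaves implicit.
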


\begin{proof}
Any trigonalizable $k$-group scheme $G$ has a closed immersion in the smooth $k$-algebraic group $T_n$ of trigonalizable matrices for some $n$. This $k$-group scheme is $k$-solvable. Moreover, if $G$ is infinitesimal, it is contained in the kernel of some iterated of the Frobenius of $T_n$. Therefore, by the previous Proposition, any variety of dimension greater or equal to the dimension of $T_n$ admits a generically free $G$-action.
\end{proof}

\begin{rmk}\label{nilpotentder}
    Notice that, as a consequence of the above Proposition \ref{frobkeraction}, we have that for every variety $X$ of dimension $n$ and for any $j\leq n$, there exists a nilpotent $k$-linear derivation $D$ on $K=k(X)$ of order $p^j$. Indeed, by loc. cit. there exists a generically free rational action of $W_j^1$ on $X$. This corresponds to a module algebra structure \begin{align*}
        v\colon k[T]/(T^{p^j})&\rightarrow \mathrm{Der}_k(K)\\
        T&\mapsto D
    \end{align*} where $k[T]/(T^{p^j})$ is the $k$-Hopf algebra of $\alpha_{p^j}$, the Cartier dual of $W_j^1$. Then $D^{p^j}=0$ and, by \cite[III.\textsection2, Corollary 2.7]{DG}, $D,D^p,\dots,D^{p^{j-1}}$ are $K$-linearly independent (indeed otherwise $v$ would have a non-trivial kernel), hence $D$ has order $j$.
\end{rmk}

\subsection{Proof of the main Theorem}

We begin this part with three technical results that are the building blocks for the construction of generically free rational actions done in the proof of Theorem \ref{mainthm}: the main idea of the proof is to show that for $G$ an infinitesimal commutative unipotent $k$-group scheme of height $n$, a generically free rational action of $G_{n-1}=\ker(F_G^{n-1})$ on a variety $X$ can be extended to a rational action of $G$. The main Theorem will then be proved by an inductive argument with base step given by Proposition \ref{frobkeraction}, which settles the case of commutative trigonalizable group schemes of height one. 

Let us remark that the Theorem is stated for $k$ a perfect field and the main reason for this is that, thanks to Proposition \ref{infwitt}, over perfect fields we have a good control of the structure of any infinitesimal commutative unipotent group scheme in terms of its ambient space given by finite Witt vectors. Nevertheless, the constructive proof of Theorem \ref{mainthm} works over an arbitrary field and can be used to build generically free rational actions of infinitesimal commutative unipotent group schemes, provided one has enough control on the structure of the group scheme they are considering. This also motivates us to keep the notation $kK^p$ that, when $k$ is perfect, coincides with $K^p$.

Lemma \ref{actiononquotient} tells us that if $G_{n-1}$ acts on $X$, then $G$ acts already on $X^{(p)}$. Lemma \ref{extendtopbasis} shows that to extend a rational action of $G_{n-1}$, it is enough to define it on a $p$-basis of the function field $K=k(X)/kK^p$. Lemma \ref{commder} shows that under certain commutativity assumptions, some commutators are indeed derivations on $K$.

\begin{lemma}\label{actiononquotient}
    Let $G$ be an infinitesimal $k$-group scheme of height $n$ and let us denote $G_i:=\ker(F_G^i)$ for all $i=1,\dots,n-1$. Any action of $G_{n-1}$ on a $k$-variety $X$ induces naturally an action of $G/G_i$ on $X^{(p^i)}$. Moreover, if the $G_{n-1}$-action on $X$ is faithful, the same holds true for the induced $G/G_i$-action on $X^{(p^i)}$.
\end{lemma}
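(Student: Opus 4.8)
The plan is to realize the quotient $G/G_i$ as a subgroup scheme of a Frobenius twist of $G_{n-1}$, and then to obtain the action on $X^{(p^i)}$ simply by applying the twisting functor to the given action and restricting it to this subgroup.

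First I would record the purely group-theoretic input. Since $F_G^i\colon G\to G^{(p^i)}$ is a homomorphism of finite group schemes with kernel $G_i=\ker(F_G^i)$, the first isomorphism theorem for finite group schemes gives a factorization $G\twoheadrightarrow G/G_i\xrightarrow{\sim}\Ima(F_G^i)\hookrightarrow G^{(p^i)}$. The crucial observation is that this image lands inside the twist of $G_{n-1}$: because the twisting functor commutes both with the relative Frobenius and with the formation of kernels, one has $(G_{n-1})^{(p^i)}=\ker\big(F_{G^{(p^i)}}^{\,n-1}\big)$, and the composite $F_{G^{(p^i)}}^{\,n-1}\circ F_G^i=F_G^{\,n-1+i}$ vanishes as soon as $i\geq 1$, since then $n-1+i\geq n$ and $F_G^n=0$. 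Hence $\Ima(F_G^i)\subseteq (G_{n-1})^{(p^i)}$, so that $G/G_i$ is naturally a closed subgroup scheme of $(G_{n-1})^{(p^i)}$.

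Next I would produce the action. The functor $Y\mapsto Y^{(p^i)}$ is base change along the $i$-th power of the Frobenius of $k$, hence compatible with products; applying it to the given action $\rho\colon G_{n-1}\times_k X\to X$ yields $\rho^{(p^i)}\colon (G_{n-1})^{(p^i)}\times_k X^{(p^i)}\to X^{(p^i)}$, which is again an action since twisting preserves the associativity and unit diagrams. Restricting $\rho^{(p^i)}$ along the inclusion $G/G_i\cong\Ima(F_G^i)\hookrightarrow (G_{n-1})^{(p^i)}$ from the previous step then gives the desired natural action of $G/G_i$ on $X^{(p^i)}$. For faithfulness I would argue in two steps: first, since the twist is a flat (indeed faithfully flat) base change and the centralizer commutes with flat base change, one has $C_{(G_{n-1})^{(p^i)}}(X^{(p^i)})=\big(C_{G_{n-1}}(X)\big)^{(p^i)}$, and the twist of the trivial group scheme is trivial, so the $(G_{n-1})^{(p^i)}$-action stays faithful; second, restricting a faithful action to a subgroup scheme remains faithful, because $C_{G/G_i}(X^{(p^i)})=C_{(G_{n-1})^{(p^i)}}(X^{(p^i)})\times_{(G_{n-1})^{(p^i)}}(G/G_i)$ is trivial once the larger centralizer is (compare Remark \ref{stabsgrp}).

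Setting aside the routine verifications, the main obstacle is the bookkeeping of the twist: one must check that the three compatibilities used — that the relative Frobenius, the formation of kernels, and the formation of centralizers each commute with the base change $(-)^{(p^i)}$ — are all legitimate, and in particular that the chain $G/G_i\cong\Ima(F_G^i)\subseteq (G_{n-1})^{(p^i)}$ is an inclusion of group schemes and not merely of points. That inclusion is the conceptual heart of the argument: it is precisely the inequality $i\geq 1$, equivalently $n-1+i\geq n$, that forces the image of $F_G^i$ into the $(n-1)$-th Frobenius kernel after twisting, which is what allows the $G_{n-1}$-action to descend to a $G/G_i$-action on $X^{(p^i)}$.
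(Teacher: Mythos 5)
Your proof is correct and follows essentially the same route as the paper: twist the given action to get an action of $(G_{n-1})^{(p^i)}$ on $X^{(p^i)}$, identify $G/G_i\simeq\Ima(F_G^i)$ as a closed subgroup scheme of $(G_{n-1})^{(p^i)}$, and restrict, with faithfulness preserved under both steps. The only difference is that you spell out the two points the paper leaves implicit, namely that $F_{G^{(p^i)}}^{n-1}\circ F_G^i=F_G^{n-1+i}=0$ forces $\Ima(F_G^i)\subseteq (G_{n-1})^{(p^i)}$, and that centralizers are compatible with the base change and with passage to subgroups.
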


\begin{proof}
    Let $G_{n-1}\times_kX\rightarrow X$ be a faithful action. Then we have a naturally induced faithful action $G_{n-1}^{(p^i)}\times_kX^{(p^i)}\rightarrow X^{(p^i)}$ obtained by base change (the proof is the same as that of Lemma \ref{bschngenfaithful}) and therefore also of $$G/G_i\simeq \Ima(F_G^i)\subseteq G_{n-1}^{(p^i)}$$ on $X^{(p^i)}.$
\end{proof}

\begin{rmk}\label{rmktecn}\leavevmode
\begin{itemize}
\item  In the above setting, the composite $$G\times_kX^{(p)}\rightarrow G/\ker(F_G)\times_kX^{(p)}\rightarrow X^{(p)}$$ provides us naturally with an action of $G$ on $X^{(p)}$, via $F_G\colon G\rightarrow G^{(p)}$.
    \item Algebraically, this means that if we have a module algebra structure $$k[G_{n-1}^\vee]\rightarrow \mathrm{End}_k(B)$$ this induces a module algebra structure $$v\colon k[G^\vee]\rightarrow\mathrm{End}_k(\Ima(F_B)).$$ Let $\eta\colon \Ima(F_B)\rightarrow\Hom_k\left(k[G^\vee],\Ima(F_B)\right)$ be the corresponding morphism of algebras and recall that $\eta(b)(a)=v(a)(b)$ for every $a\in k[G^\vee]$ and $b\in B$ (see  Remark \ref{initialrmk}). Explicitly we then have that for every $a\in k[G^\vee]$ and $\beta\in B^{(p)}$ it holds $$\eta(F_B(\beta))(a)=\left(F_{\Hom_{k}(A,B)}\circ\eta^{(p)}(\beta)\right)(a)=F_B\circ\eta^{(p)}(\beta)\circ V_A(a)=F_B(v^{(p)}(V_A(a))(\beta))$$ where the first equality holds by functoriality of the Frobenius and the second one by Lemma \ref{Frobofmaps}.
\end{itemize}
    
\end{rmk}

Let $k$ be perfect, $G$ be an infinitesimal commutative unipotent $k$-group scheme of height $n$ and $G_{n-1}=\ker(F_G^{n-1})$. In order to simplify the notation we denote by $G_{n-1}^\vee$ its dual, that is $G_{n-1}^\vee:=(G_{n-1})^\vee=\mathrm{coker}(V_{G^\vee}^{n-1})=G^\vee/\Ima(V_{G^\vee}^{n-1}).$
In Proposition \ref{structureicu}, we showed that there exists a structure of $k$-group scheme on $\mathcal{G}:=G_{n-1}^\vee\times_k\A^{r_n}_k$ where $r_n=\dim_k(\Lie (\Ima(V_{G^\vee}^{n-1})))$ such that $$0\rightarrow\G_a^{r_n}\rightarrow\mathcal{G}\rightarrow G_{n-1}^\vee\rightarrow0.$$ Moreover, $G^\vee$ embeds in $\mathcal{G}$ realizing the exact sequence $$0\rightarrow G^\vee\rightarrow\mathcal{G}\rightarrow\G_a^{r_n}\rightarrow0.$$
    At the level of algebras, this is rephrased by saying that $$k[\mathcal{G}]=k[G_{n-1}^\vee][T_{1},\dots,T_{r_n}]$$ can be endowed with a structure of $k$-Hopf algebra (coming from that of Witt vectors as explained in the proof of Proposition \ref{structureicu}) such that $$\Delta(T_{j})=T_{j}\otimes1+1\otimes T_{j}+R_{j}$$ where $R_{j}$ is an element of $k[G_{n-1}^\vee]\otimes_kk[G_{n-1}^\vee].$ Moreover,  $$k[G^\vee]=k[G_{n-1}^\vee][T_{1},\dots,T_{r_n}]/(P_1,\dots,P_{r_n})$$ where the polynomials $P_j$ are primitive elements of $k[G_{n-1}^\vee][T_{1},\dots,T_{r_n}]$ congruent to $T_{j}^{p^{l_{j}}}$ for some $l_{j}\geq1$ modulo the augmentation ideal of $k[G_{n-1}^\vee]$ for every $j=1,\dots,r_n$. %Notice that $G^\vee\subseteq (W_n)^{r_n}$ and $G_{n-1}^\vee\subseteq(W_{n-1})^{r_n}$.

For any $r\leq \dim_k(\Lie (\Ima(V_{G^\vee}^{n-1})))$, consider the commutative $k$-Hopf algebra $$k[G_{n-1}^\vee][T_1,\dots,T_r]$$ corresponding to $G_{n-1}^\vee\times_k\A^{r}_k$ with $k$-group scheme structure induced by that of $\mathcal{G}$. Consider the non-commutative $k$-algebra $k[G_{n-1}^\vee]\langle T_1,\dots,T_r\rangle$ where the variables $T_i$ don't commute neither among them nor with the commutative subalgebra $k[G_{n-1}^\vee]$. We endow $k[G_{n-1}^\vee]\langle T_1,\dots,T_r\rangle$ of a $k$-Hopf algebra structure (which extends that of $k[G_{n-1}^\vee]$) defined as follows: one first takes the non-commutative free algebra $\Gamma=k\langle T_{ij},T_1,\dots,T_r\rangle_{1\leq i\leq n-1,1\leq j\leq s}$ where $s$ is minimal such that $G_{n-1}^\vee\subseteq(W_{n-1})^s$ (notice that the $T_{ij}$'s are the variables needed to define $k[G_{n-1}^\vee]$ , while the $r$ additional variables $T_1,\dots,T_r$ will each play the role of the $n$th coordinate in the corresponding copy of Witt vectors). We define $\Delta:\Gamma\rightarrow\Gamma\otimes_k\Gamma$, sending each variable to the element of $\Gamma\otimes_k\Gamma$ given by the comultiplication of Witt vectors and then extending this map to a morphism of algebras. Finally we quotient $\Gamma$ by the two-sided ideal given by the commutators of the variables $T_{ij}$ for $1\leq i\leq n-1,1\leq j\leq s$ and by the two-sided ideal defining $k[G_{n-1}^\vee]$. In this way, $\Delta$ defines a comultiplication on $k[G_{n-1}^\vee]\langle T_1,\dots,T_r\rangle$. Notice that after taking the quotient $\Delta$ becomes coassociative (while a priori before it was not): indeed, since $\Delta$ extends the comultiplication of $k[G_{n-1}^\vee]$, we just have to check the coassociativity for $T_1,\dots,T_r$ and the property holds since $\Delta(T_{j})=T_{j}\otimes1+1\otimes T_{j}+R_{j}$ where $R_{j}$ is an element of $k[G_{n-1}^\vee]\otimes_kk[G_{n-1}^\vee]$ for $j=1,\dots,r$. 

Before going on we illustrate this construction in the case of Example \ref{concretexample}: consider $G=\alpha_p\times_k\alpha_{p^2}$ which has height $2$ and whose Cartier dual is $G^\vee=\alpha_p\times_kW_2^1$. We then have that $G_1^\vee=\alpha_p\times_k\alpha_p$ and in Example \ref{concretexample} we showed that we can write the Hopf algebra of $G^\vee$ as $$k[G^\vee]=k[G^\vee_1][T_1]/(T_1^p)=k[S_0,T_0]/(S_0^p,T_0^p)[T_1]/(T_1^p).$$ In this case we will then have $\Gamma=k\langle S_0,T_0,T_1\rangle$ and \begin{align*}
    \Delta\colon S_0&\mapsto S_0\otimes1+1\otimes S_0,\\
    T_0&\mapsto T_0\otimes1+1\otimes T_0,\\
    T_1&\mapsto T_1\otimes1+1\otimes T_1-\sum_{k=1}^{p-1}\frac{1}{p}\binom{p}{k}T_0^k\otimes T_0^{p-k}.
\end{align*}
To obtain a $k$-Hopf algebra structure on $k[G_1^\vee]\langle T_1\rangle$ we then quotient $\Gamma$ by the two sided ideal generated by $S_0T_0-T_0S_0,$ $ S_0^p$ and $T_0^p$.

In the above setting, we have the following results. 

\begin{lemma}\label{extendtopbasis}
    Let $X$ be a $k$-variety of dimension $s$ with function field $K$ and $p$-basis $(t_1,\dots,t_s)$ of $K/kK^p$. Then for any set $\{x_{ih}\mid i=1,\dots,r,h=1,\dots,s\}$ of elements of $K$ and any module algebra structure $$\Tilde{v}\colon k[G_{n-1}^\vee]\rightarrow \mathrm{Diff}_k(K)$$ there exists a unique module algebra structure $$v\colon k[G_{n-1}^\vee]\langle T_1,\dots,T_r\rangle\rightarrow \mathrm{Diff}_k(K)$$ extending $\Tilde{v}$ and such that $v(T_i)(t_h)=x_{ih}$ for every $i$ and $h.$
\end{lemma}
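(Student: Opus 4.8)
The plan is to reduce the statement to constructing the single operators $D_i:=v(T_i)\in\mathrm{Diff}_k^+(K)$ and then checking the module algebra axioms only on generators. The algebra $k[G_{n-1}^\vee]\langle T_1,\dots,T_r\rangle$ is generated by the commutative subalgebra $k[G_{n-1}^\vee]$ together with the non-commuting variables $T_1,\dots,T_r$, with no relations imposed on the latter; hence a $k$-algebra homomorphism $v$ extending $\Tilde{v}$ is the same datum as an arbitrary choice of operators $D_i$, with $v(T_{i_1}\cdots T_{i_\ell}\,a)=D_{i_1}\cdots D_{i_\ell}\,\Tilde{v}(a)$, and there are no relations among the $T_i$ to verify. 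Since $\Delta$ is an algebra homomorphism, the compatibility with products property (\ref{property}) is multiplicative, so it propagates from the generators to the whole algebra once it is known on each $T_i$ and on $k[G_{n-1}^\vee]$ (where it holds by hypothesis on $\Tilde{v}$). Writing $\Delta(T_i)=T_i\otimes1+1\otimes T_i+R_i$ with $R_i\in k[G_{n-1}^\vee]\otimes_kk[G_{n-1}^\vee]$ and setting $\delta_i:=m_K\circ(\Tilde{v}\otimes\Tilde{v})(R_i)$, the property for $T_i$ reads as the twisted Leibniz rule $D_i(fg)=D_i(f)\,g+f\,D_i(g)+\delta_i(f,g)$ for all $f,g\in K$.

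For uniqueness, I would first observe that $D_i$ is forced on $kK^p$. By Proposition \ref{Liu} we have $kK^p=\Ima(F_K)=k(X^{(p)})$, and since $V_{k[G^\vee]}$ sends $T_i$ into $k[G_{n-1}^\vee]$ (the top layer being killed by the Verschiebung in the cofiltration of Proposition \ref{structureicu}), the Frobenius–Verschiebung formula of Remark \ref{rmktecn}, namely $D_i(F_K(\beta))=F_K\big(\Tilde{v}^{(p)}(V_{k[G^\vee]}(T_i))(\beta)\big)$, determines $D_i|_{kK^p}$ entirely in terms of $\Tilde{v}$. Since $(t_1,\dots,t_s)$ is a $p$-basis, every element of $K$ is uniquely a $kK^p$-combination of the monomials $t_1^{m_1}\cdots t_s^{m_s}$ with $0\le m_h\le p-1$; iterating the twisted Leibniz rule then expresses $D_i$ on each such monomial through $D_i|_{kK^p}$, the prescribed values $D_i(t_h)=x_{ih}$, and the corrections $\delta_i$ (already fixed by $\Tilde{v}$). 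Hence $D_i$, and therefore $v$, is uniquely determined.

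For existence, I would run this backwards: define $D_i$ on the $kK^p$-basis $\{t_1^{m_1}\cdots t_s^{m_s}\}$ by the same recipe — its restriction to $kK^p$ by the Frobenius–Verschiebung formula, its value on each $t_h$ as $x_{ih}$, and its value on a monomial by the twisted Leibniz rule — and extend $k$-linearly. One checks that $D_i(1)=0$ (as $\varepsilon(T_i)=0$) and that $D_i$ is a differential operator, being assembled from the differential operators in the image of $\Tilde{v}$ together with the derivations in the $t_h$-directions, so that $D_i\in\mathrm{Diff}_k^+(K)$. Extending $v$ multiplicatively as above then yields the desired module algebra structure, provided the twisted Leibniz rule holds for all pairs $(f,g)$ and not merely the defining ones.

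The main obstacle is precisely this last coherence check. The recipe specifies $D_i$ in two overlapping ways — via Frobenius–Verschiebung on $kK^p$ and via the iterated Leibniz rule on the $t_h$ — and one must verify that they agree, the decisive instance being that computing $D_i(t_h^p)$ by the Leibniz rule reproduces the value forced on $t_h^p\in kK^p$. I expect this to follow from the coassociativity and cocommutativity of the Witt comultiplication defining the $R_i$, together with the compatibility between $R_i$ and $V_{k[G^\vee]}(T_i)$ encoded in Remark \ref{rmktecn}. Granting it, the global twisted Leibniz rule for arbitrary $f,g$ follows by $k$-bilinearity, and the module algebra axioms then propagate to the whole Hopf algebra as explained in the first paragraph.
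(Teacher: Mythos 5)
Your proposal follows essentially the same route as the paper's proof: reduce to defining the operators $D_i=v(T_i)$ on generators, pin down $D_i$ on $kK^p$ via the induced action on $X^{(p)}$ (Lemma \ref{actiononquotient} / Remark \ref{rmktecn}), prescribe $D_i(t_h)=x_{ih}$, propagate by the twisted Leibniz rule over the $p$-basis monomials, and obtain uniqueness because every value is forced. The coherence check you flag as ``granted'' (consistency of the recursion, including the value on $t_h^p$) is exactly the point the paper also dispatches by appealing to coassociativity and cocommutativity of the Hopf algebra structure, so your treatment matches the paper's both in strategy and in level of detail.
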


\begin{proof}
Let us begin with the existence. Since $v_{|k[G_{n-1}^\vee]}=\Tilde{v}$, it is enough to show that we can define $D_i=v(T_i)$ satisfying the property of compatibility with products (\ref{property}) and such that $D_i(t_h)=x_{ih}$ for every $i$ and $h.$ By Proposition \ref{structureicu} we have that $$\Delta(T_i)=T_i\otimes1+1\otimes T_i+\sum_j\alpha_{ij}\otimes\beta_{ij}$$ with $\alpha_{ij}$ and $\beta_{ij}$ lying in $k[G_{n-1}^\vee]$ for all $i,j.$ Therefore we need to define $D_i$ in such a way that $$D_i(fg)=D_i(f)g+fD_i(g)+\sum_jv(\alpha_{ij})(f)v(\beta_{ij})(g)$$ for all $f,g\in K$. Recall that for $(t_1,\dots,t_{s})$ to be a $p$-basis of $K/kK^p$ means that $$\{t_1^{m_1}\dots t_s^{m_s}\mid 0\leq m_1,\dots,m_s\leq p-1\}$$ is a basis of $K$ as $kK^p$-vector space. By assumption $G_{n-1}$ acts on the generic point $Y=\spec(K)$ of $X$ and thus, by Lemma \ref{actiononquotient}, $G$ acts on $Y^{(p)}=\spec(kK^p)$. Therefore, the differential operator $D_i:=v(T_i)$ is defined on $kK^p$ for every $i=1,\dots,r$. We then define $$D_i(a t_h)=D_i(a)t_h+a x_{ih}+\sum_jv(\alpha_{ij})(a) v(\beta_{ij})(t_h)$$ and
$$D_i(t_ht_l)=x_{ih}t_l+t_h x_{il}+\sum_jv(\alpha_{ij})(t_h) v(\beta_{ij})(t_l)$$ for every $a\in kK^p$ and $h\leq l=1,\dots,s$. Applying recursively the formula $$D_i(fg)=D_i(f)g+fD_i(g)+\sum_jv(\alpha_{ij})(f)v(\beta_{ij})(g)$$ we define $D_i$ on all the monomials of the form $at_1^{m_1}\dots t_s^{m_s}$ with $a\in kK^p$ and $0\leq m_1,\dots,m_s\leq p-1$ and extend it by linearity to every element of $K$.  The fact that $D_i$ is well defined is a consequence of the coassociativity and cocommutativity of the Hopf algebra structure on $k[G_{n-1}^\vee]\langle T_1,\dots,T_r\rangle$.
The uniqueness of the module algebra structure comes by construction.  
\end{proof}

Given two strings of natural numbers $I=(i_1,\dots,i_n)$ and $J=(j_1,\dots,j_n)$, we say that $I$ is smaller than $J$ with respect to the lexicographic order, and we write $I<_{LEX}J$, if there exists $k\in \{1,\dots,n\}$ such that $(i_1,\dots,i_{k-1})=(j_1,\dots,j_{k-1})$ and $i_k<j_k$.

\begin{lemma}\label{commder}
    Let $A:=k[G_{n-1}^\vee]\langle T_{n1},\dots,T_{nr_n}\rangle$ be as above. Moreover,  write $$k[G_j^\vee]=k[G_{j-1}^\vee][T_{j1},\dots,T_{jr_j}]/(P_{j1},\dots,P_{jr_j})$$ as in Remark \ref{algebraic pov} for every $j\leq n-1$. Let $$v\colon A\rightarrow \mathrm{End}_k(B)$$ be an $A$-module algebra structure on a $k$-algebra $B$ 
    and let $D_{jh}:=v(T_{jh})$ for every $j=1,\dots,n$ and $h=1,\dots,r_j$. It holds that for any $h=1,\dots,r_n$ and $(s,t)<_{LEX}(n,h)$, if $D_{nh}$ commutes with every element of $v\left(k[G_{s-1}]\right)$ then $D_{nh}D_{st}-D_{st}D_{nh}$ is a derivation.
\end{lemma}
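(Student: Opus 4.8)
The plan is to verify the Leibniz identity for the commutator directly, exploiting that each operator $D_{jh}=v(T_{jh})$ fails to be a derivation only through a defect that lives in a \emph{commutative} subalgebra. First I would reduce the statement to the Leibniz rule: both $D_{nh}$ and $D_{st}$ lie in $\mathrm{Diff}_k^+(B)$, so they and their commutator annihilate $1$, and a differential operator killing $1$ is a derivation exactly when $W(fg)=W(f)g+fW(g)$. Next I would record the precise defect of each $D_{jh}$ coming from Proposition \ref{structureicu} combined with the compatibility-with-products property of Remark \ref{initialrmk}. Writing $\Delta(T_{jh})=T_{jh}\otimes1+1\otimes T_{jh}+\sum_l\alpha^{jh}_l\otimes\beta^{jh}_l$ with $\alpha^{jh}_l,\beta^{jh}_l\in k[G_{j-1}^\vee]$, one obtains
\[
D_{jh}(fg)=D_{jh}(f)g+fD_{jh}(g)+\Phi_{jh}(f,g),
\]
where $\Phi_{jh}(f,g)=\sum_l v(\alpha^{jh}_l)(f)\,v(\beta^{jh}_l)(g)$. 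The decisive structural point is that every factor $v(\alpha^{jh}_l),v(\beta^{jh}_l)$ lies in the \emph{commutative} subalgebra $v(k[G_{n-1}^\vee])$ of $\mathrm{End}_k(B)$, since $v$ restricted to the commutative Hopf algebra $k[G_{n-1}^\vee]$ is an algebra homomorphism.

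Setting $D=D_{nh}$ and $E=D_{st}$, with defect forms $\Phi_D=\Phi_{nh}$ and $\Phi_E=\Phi_{st}$, I would expand $DE(fg)$ and $ED(fg)$ by applying the quasi-derivation rule twice, then subtract. The purely Leibniz contributions combine to $[D,E](f)g+f[D,E](g)$, and what remains are correction terms of three kinds: the single-$\Phi_E$ terms $\Phi_E(D(f),g)$ and $\Phi_E(f,D(g))$; the single-$\Phi_D$ terms $\Phi_D(E(f),g)$ and $\Phi_D(f,E(g))$; and one ``double'' term arising from applying one defect inside the other. The whole task is to show that all of these cancel between the two orders.

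The single-$\Phi_E$ terms are precisely where the hypothesis is used: since $\gamma^{st}_m:=\alpha^{st}_m$ and $\delta^{st}_m:=\beta^{st}_m$ lie in $k[G_{s-1}^\vee]$ and $D_{nh}$ commutes with all of $v(k[G_{s-1}^\vee])$, applying $D$ to $\Phi_E(f,g)$ yields exactly $\Phi_E(D(f),g)+\Phi_E(f,D(g))$, matching the corresponding contribution from $ED$. The single-$\Phi_D$ terms match because $E$ commutes with the factors $v(\alpha^{nh}_l),v(\beta^{nh}_l)$: these lie in the commutative subalgebra $v(k[G_{n-1}^\vee])$, which contains $E=v(T_{st})$ whenever $s\le n-1$. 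Finally, the two double terms are $\sum v(\alpha^{nh}_l\gamma^{st}_m)(f)\,v(\beta^{nh}_l\delta^{st}_m)(g)$ and $\sum v(\gamma^{st}_m\alpha^{nh}_l)(f)\,v(\delta^{st}_m\beta^{nh}_l)(g)$, and they coincide because $v(k[G_{n-1}^\vee])$ is commutative, so $\alpha^{nh}_l\gamma^{st}_m=\gamma^{st}_m\alpha^{nh}_l$ and likewise for the $\beta,\delta$ factors. Hence every correction cancels and $[D,E]$ satisfies the Leibniz rule.

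The main obstacle is not conceptual but organizational: carrying out the double expansion cleanly and isolating the two independent commutativity inputs responsible for the cancellations. The \emph{hypothesis} $[D_{nh},v(k[G_{s-1}^\vee])]=0$ disposes of the $\Phi_E$-corrections, while the commutativity of the image $v(k[G_{n-1}^\vee])$ of the lower Hopf algebra disposes of the $\Phi_D$-corrections and of the symmetry of the double term. The delicate point to check is the boundary case $s=n$ (so $t<h$): there $E=D_{nt}$ is itself a top-level, noncommuting variable and no longer automatically commutes with $v(k[G_{n-1}^\vee])$, so for the single-$\Phi_D$ terms to match one must additionally use that $D_{nt}$ commutes with $v(k[G_{n-1}^\vee])$, which is ensured by the order in which the top-level operators are introduced in the construction.
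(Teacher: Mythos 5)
Your proof is correct and is essentially the paper's own argument in dual form: the paper computes $\Delta(T_{nh}T_{st}-T_{st}T_{nh})$ upstairs in $A\otimes_kA$ (using that $\Delta$ is an algebra map) and applies the module-algebra identity once to that element, while you apply the quasi-derivation rule twice downstairs in $\mathrm{End}_k(B)$, but the resulting cross terms, their cancellations, and the two commutativity inputs (the stated hypothesis on $D_{nh}$, plus commutativity of $k[G_{n-1}^\vee]$ for the single-$\Phi_D$ and double terms) are identical. Your closing remark on the boundary case $s=n$ is well spotted: there the paper's terms $\sum_q\alpha_{nh}^qT_{st}\otimes\beta_{nh}^q-\sum_qT_{st}\alpha_{nh}^q\otimes\beta_{nh}^q$ (and their right-hand analogues) likewise vanish under $m\circ(v\otimes v)$ only if $D_{nt}$ commutes with $v(k[G_{n-1}^\vee])$, a fact the paper's proof uses implicitly and which holds in the recursion of Theorem \ref{mainthm}, where earlier top-level operators have already been constructed to commute with everything preceding them in the lexicographic order.
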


\begin{proof} 
Recall that, by Proposition \ref{structureicu}, for every $j=1,\dots,n$ and $h=1,\dots,r_j$ $$\Delta(T_{jh})=T_{jh}\otimes1+1\otimes T_{jh}+\sum_q\alpha_{jh}^q\otimes\beta_{jh}^q$$ where $\alpha_{jh}^q$ and $\beta_{jh}^q$ lie in $k[G_{j-1}^\vee]$ for all $q.$ 
Now $$\Delta(T_{nh}T_{st})=\Delta(T_{nh})\Delta(T_{st})=$$ $$T_{nh}T_{st}\otimes1+1\otimes T_{nh}T_{st}+T_{nh}\otimes T_{st}+T_{st}\otimes T_{nh}+$$ $$\sum_{q}\alpha_{nh}^{q}T_{st}\otimes\beta_{nh}^{q}+\sum_{q}\alpha_{nh}^{q}\otimes\beta_{nh}^{q}T_{st}+\sum_{q'}T_{nh}\alpha_{st}^{q'}\otimes\beta_{st}^{q'}+\sum_{q'}\alpha_{st}^{q'}\otimes T_{nh}\beta_{st}^{q'}+\sum_{q,q'}\alpha_{nh}^{q}\alpha_{st}^{q'}\otimes\beta_{nh}^{q}\beta_{st}^{q'}$$ and $$\Delta(T_{st}T_{nh})=\Delta(T_{st})\Delta(T_{nh})=$$ $$T_{st}T_{nh}\otimes1+1\otimes T_{st}T_{nh}+T_{nh}\otimes T_{st}+T_{st}\otimes T_{nh}+$$ $$\sum_{q}T_{st}\alpha_{nh}^{q}\otimes\beta_{nh}^{q}+\sum_{q}\alpha_{nh}^{q}\otimes T_{st}\beta_{nh}^{q}+\sum_{q'}\alpha_{st}^{q'}T_{nh}\otimes\beta_{st}^{q'}+\sum_{q'}\alpha_{st}^{q'}\otimes \beta_{st}^{q'}T_{nh}+\sum_{q,q'}\alpha_{st}^{q'}\alpha_{nh}^{q}\otimes\beta_{st}^{q'}\beta_{nh}^{q}.$$ Therefore $$\Delta(T_{nh}T_{st}-T_{st}T_{nh})=$$ $$(T_{nh}T_{st}-T_{st}T_{nh})\otimes1+1\otimes(T_{nh}T_{st}-T_{st}T_{nh})+$$ $$\sum_{q}\alpha_{nh}^{q}T_{st}\otimes\beta_{nh}^{q}+\sum_{q}\alpha_{nh}^{q}\otimes\beta_{nh}^{q}T_{st}+\sum_{q'}T_{nh}\alpha_{st}^{q'}\otimes\beta_{st}^{q'}+\sum_{q'}\alpha_{st}^{q'}\otimes T_{nh}\beta_{st}^{q'}$$$$-\sum_{q}T_{st}\alpha_{nh}^{q}\otimes\beta_{nh}^{q}-\sum_{q}\alpha_{nh}^{q}\otimes T_{st}\beta_{nh}^{q}-\sum_{q'}\alpha_{st}^{q'}T_{nh}\otimes\beta_{st}^{q'}-\sum_{q'}\alpha_{st}^{q'}\otimes \beta_{st}^{q'}T_{nh}.$$  
If $(s,t)<_{LEX}(n,h)$, using the hypothesis that $D_{nh}$ commutes with every element of $v\left(k[G_{s-1}]\right)$ we obtain that $$(D_{nh}D_{st}-D_{st}D_{nh})(fg)=m\circ(v\otimes v\circ\Delta(T_{nh}T_{st}-T_{st}T_{nh}))(f\otimes g)=$$ $$m\circ(v\otimes v\circ(T_{nh}T_{st}-T_{st}T_{nh})\otimes1+1\otimes(T_{nh}T_{st}-T_{st}T_{nh}))(f\otimes g)$$ for every $f,g\in B$. Hence the statement.
\end{proof}

We give now an example, showing how to construct explicitly generically free rational actions of the $p^m$-torsion of a supersingular elliptic curve on any curve. The aim is that the understanding of this baby case will help in getting through the proof of Theorem \ref{mainthm}.

\begin{example}\label{ptorsion}
    Take the self-dual infinitesimal commutative unipotent $k$-group scheme $$G=\ker(F-V\colon W_n^n\rightarrow W_n^n)=\spec\left(k[T_1,\dots,T_n]/(T_1^p,T_2^p-T_1,\dots,T_n^p-T_{n-1})\right).$$ If $k$ is algebraically closed, and $n=2m$, $G$ is the $p^m$-torsion of any supersingular elliptic curve over $k$: for the case $n=2$ see \cite[Chapter II, 15.5]{Oort} while the general case can be deduced from \cite[Section 5]{deJongOort} together with \cite[Theorem 1.2]{Oortpdiv} and a proof can also be found in \cite[Corollary 3.5]{gouthier2026infinitesimal}. %This is well known for $n=2$ (see for example \cite[Chapter II, 15.5]{Oort}). It is maybe also known in the general case but we do not have a reference. A proof of this can be found in \cite[Corollary 3.5]{gouthier2026infinitesimal}. 
    Let $X$ be any curve over $k$ and $K=kK^p(t)$ be its function field. Since $G$ is self-dual, to give a rational $G$-action on $X$ is equivalent to giving a module algebra structure $$v\colon\spec\left(k[U_1,\dots,U_n]/(U_1^p,U_2^p-U_1,\dots,U_n^p-U_{n-1})\right)\rightarrow \mathrm{Diff}_k(K).$$ We know that there exist generically free rational actions of
    the Frobenius kernel $$\ker(F_G)=\spec(k[T_n]/(T_n^p))\simeq\alpha_p$$ on $X$ (either by Proposition \ref{frobkeraction} or by \cite[Lemma 3.6 and Lemma 5.3]{brion2022actions}). In particular, any such action corresponds to choosing a non-zero derivation $D_1$ on $K$ of order $p$ or, equivalently, to giving a module algebra structure $$v\colon\spec\left(k[U_1]/(U_1^p)\right)\rightarrow \mathrm{Diff}_k(K).$$ We want to show that any such action can be extended to a generically free rational action of $G$ on $X$. To do so we show that for any $i=2,\dots,n$ any generically free rational action of $\ker(F_G^{i-1})$ on $X$ extends to a generically free rational action of $\ker(F_G^i).$ Notice that $$\ker(F_G^i)=\spec\left(k[T_{n-i+1},\dots,T_n]/(T_{n-i+1}^p,T_{n-i+2}^p-T_{n-i+1},\dots,T_n^p-T_{n-1})\right)$$ 
    and that to give a rational action of $\ker(F_G^i)$ on $X$ is equivalent to defining a module algebra structure $$v\colon\spec\left(k[U_1,\dots,U_i]/(U_1^p,U_2^p-U_1,\dots,U_i^p-U_{i-1})\right)\rightarrow \mathrm{Diff}_k(K).$$ Suppose then that we have a generically free rational action of $\ker(F_G^{i-1})$ given by differential operators $D_1,\dots,D_{i-1}$ where $D_j=v(T_j)$ for every $j=1,\dots,i-1$. To extend it to a rational action of $\ker(F_G^i)$ is equivalent to defining a differential operator $D_i=v(T_i)$ such that: \begin{enumerate}
        \item $D_{i}$ satisfies the property of compatibility with products (\ref{property});
        \item $D_{i}$ commutes with $D_{j}$ for every $j=1,\dots,i-1$;
        \item $D_{i}^{p}=D_{i-1}$.
    \end{enumerate} 
    By Lemma \ref{actiononquotient}, $D_i$ is defined on $kK^p.$ In particular, $$D_i(\beta^p)=v(T_i)(\beta^p)=\left(v(V(T_i))(\beta)\right)^p=(D_{i-1}(\beta))^p$$ for every $\beta\in K.$ By Lemma \ref{extendtopbasis}, we then have that $D_i$ is defined using property $1$ provided we choose $x=D_i(t).$ Therefore, the first property is satisfied by definition and we need to show that there exists $x$ such that also properties $2$ and $3$ are satisfied. By Lemma \ref{commder} and the fact that $T_i^p-T_{i-1}$ is a primitive element, we have that $D_iD_j-D_jD_i$ and $
     D_i^{p}-D_{i-1}$ are derivations for every $j=1,\dots,i-1$. Applying Remark \ref{canbasis}, we obtain that $D_{i}$ commutes with $D_{j}$ for every $j=1,\dots,i-1$ and $D_{i}^{p}=D_{i-1}$ if and only if the system $$\left\{\begin{array}{cc}
        D_j(x)=D_iD_j(t)  & j=1,\dots,i-1  \\
        D_i^{p-1}(x)=D_{i-1}(t)  & 
     \end{array}\right.$$ admits a solution $x=D(t).$ Notice first of all that the system is well defined, that is $D_i$ is defined on $D_j(t)$. In fact, by Corollary \ref{onepreimage} we can suppose that $D_1(t)=1$, therefore $D_1D_j(t)=D_jD_1(t)=D_j(1)=0$, that is $D_j(t)$ belongs to $kK^p$, on which $D_i$ is defined.
     Let $a_j:=D_iD_j(t)$ for $j=1,\dots,i-1$.
     By induction, the set $\{D_1,\dots,D_{i-1}\}$ is an ordered set of pairwise commuting differential operators and such that $D_{j}$ is a derivation of order $p$ on the subfield $K^{D_1,\dots,D_{j-1}}$. Moreover,  $$D_{j}(a_{l})=D_{l}(a_{j})$$ for all $j,l=1,\dots,i-1$, indeed by induction $$D_{j}D_{l}(t)=D_{l}D_{j}(t)$$ and thus $$D_{j}(a_{l})=D_{j}D_{i}D_{l}(t)=D_{i}D_{j}D_{l}(t)=D_{i}D_{l}D_{j}(t)=D_{l}D_{i}D_{j}(t)=D_{l}(a_{j})$$ as wished (we used the fact that $D_i$ satisfies properties $2$ and $3$ on $kK^p$). Moreover,  $D_{j}^{p}=D_{j-1}.$
    By Corollary \ref{systemsol} we then know that a solution of the system $$S=\left\{\begin{array}{c}
         D_1(x)=a_1\\
         \vdots \\
          D_{i-1}(x)=a_{i-1}
    \end{array}\right.$$ exists if and only if $$D_{j}^{p-1}(a_{j})=a_{j-1}$$ for all $j=1,\dots,i-1.$ The relation indeed holds true, in fact $$D_j^{p-1}(a_j)=D_j^{p-1}D_iD_j(t)=D_iD_j^p(t)=D_iD_{j-1}(t)=a_{j-1}$$ where again we used the fact that $D_j(t)\in kK^p$ and that $D_i$ commutes with $D_1,\dots,D_{i-1}$ on $kK^p.$ We are left to find a solution of $S$ which satisfies also the last equation $$D_i^{p-1}(x)=D_{i-1}(t).$$ Let then $z$ be a solution of $S$: we are looking for another solution of $S$ of the form $x=z+y$ with $y\in K^{D_1,\dots,D_{i-1}}.$ Therefore $x$ is a solution of $$D_i^{p-1}(x)=D_{i-1}(t)$$ if and only if $$D_i^{p-1}(y)=D_{i-1}(t)-D_i^{p-1}(z).$$ Notice that the right hand side belongs to $K^{D_1,\dots,D_{i-1}}\subseteq kK^p$ on which $D_i$ is a derivation of order $p$. Indeed,  for every $j=1,\dots,i-1$ it holds that $$D_jD_i^{p-1}(z)=D_i^{p-1}D_j(z)=D_i^{p-1}D_iD_j(t)=D_i^pD_j(t)=D_{i-1}D_j(t)=D_jD_{i-1}(t)$$ as wished. Therefore, by Lemma \ref{onepreimage}, $y$ exists if and only if $D_i(D_{i-1}(t)-D_i^{p-1}(z))=0$ which is satisfied since $$D_i^p(z)=D_{i-1}(z)=D_iD_{i-1}(t).$$
    Notice that the action constructed is generically free since it extends the generically free action of $\soc(G)\simeq\alpha_p$ (see Proposition \ref{kerfrob}).
\end{example}

We are now ready to prove our result in full generality.

\begin{namedthm*}{Theorem \ref{mainthm}}
\textit{Let $k$ be a perfect field of characteristic $p>0$ and $G$ be an infinitesimal commutative unipotent $k$-group scheme with Lie algebra of dimension $s$. Then for every $k$-variety $X$ of dimension $\geq s$ there exist generically free rational actions of $G$ on $X.$ Moreover, for any $r\geq1$, any generically free rational action of $\ker(F_G^r)$ on $X$ can be extended to a generically free rational action of $G$ on $X$.}
\end{namedthm*}

\begin{proof}
We begin by proving that if $X$ is a $k$-variety of dimension $s=\dim_k(\Lie (G))$, then $X$ admits a generically free rational action of $G$. By Proposition \ref{frobkeraction} and Remark \ref{theorem: case height 1}, there exists a generically free rational action of $\ker(F_G)$ on $X$.
Consider the filtration $$G_1\subseteq G_2\subseteq\dots\subseteq G_{n-1}\subseteq G_n=G$$ where $G_i:=\ker(F_G^i)$ and $n$ is the height of $G$.

\vspace{0.5em}

\textit{\underline{First induction (on $i$)}}: 

To show that there exists a generically free rational action of $G$ on $X$, we will prove that for every $i=2,\dots,n$ any generically free rational action of $G_{i-1}$ on $X$ extends to a generically free rational action of $G_i$ on $X$. Moreover, we will consider any possible extension of the actions. As a consequence, the second part of the statement will be satisfied by construction. Let $K=k(X)$ be the function field of $X$.
By Proposition \ref{infinitesimalactions}, to give a rational action of $G_i$ on $X$ is equivalent to endowing $K$ of a $k[G_i^\vee]$-module algebra structure, where $G_i^\vee$ is the Cartier dual of $G_i$. By Proposition \ref{structureicu}, $$k[G_i^\vee]=k[G^\vee_{i-1}][T_{i1},\dots,T_{ir_i}]/(P_{i1},\dots,P_{ir_i})$$ where $$G^\vee_{i-1}=\mathrm{coker}(V_{G^\vee}^{i-1})=\left(\ker(F_G^{i-1})\right)^\vee,$$ $r_i=\dim_k\left(\Lie\left(H_i\right)\right)$ with $H_i=\Ima(V_{G^\vee}^{i-1})/\Ima( V_{G^\vee}^i)$, $P_{ij}=T_{ij}^{p^{m_{ij}}}-Q_{ij}$ are primitive elements of $k[G^\vee_{i-1}][T_{i1},\dots,T_{ir_i}]$, $Q_{ij}$ are polynomials with coefficients in the augmentation ideal of $k[G^\vee_{i-1}]$ and $$\Delta(T_{ij})=T_{ij}\otimes1+1\otimes T_{ij}+R_{ij}$$ where $R_{ij}$ is an element of $k[G^\vee_{i-1}]\otimes_kk[G^\vee_{i-1}]$ for every $j=1,\dots,r_i.$ 
We then want to show that a $k[G^\vee_{i-1}]$-module algebra structure on $K$ extends to a $k[G_i^\vee]$-module algebra structure on $K$. A $k[G^\vee_{i-1}]$-module structure on $K$ is given by a morphism of algebras
    \begin{align*}
        v\colon k[G^\vee_{i-1}]&\rightarrow \mathrm{Diff}_k(K)\end{align*} satisfying the property of compatibility with products (see Proposition \ref{infinitesimalactions}). If we want to extend $v$ to $$k[G_i^\vee]=k[G^\vee_{i-1}][T_{i1},\dots,T_{ir_i}]/(T_{i1}^{p^{m_{i1}}}-Q_{i1},\dots,T_{ir_i}^{p^{m_{ir_i}}}-Q_{ir_i})\rightarrow \mathrm{Diff}_k(K)$$ we need to define $v(T_{ij})=D_{ij}$ for every $j=1,\dots,r_i$ in such a way that the above map is a $k[G_i^\vee]$-module algebra structure on $K$, that is the following properties are satisfied for every $j=1,\dots,r_i$:
    \begin{enumerate}
        \item $D_{ij}$ satisfies the property of compatibility with products (\ref{property});
        \item $D_{ij}$ commutes with $D_{kl}$ for every $(k,l)<_{LEX}(i,j)$;
        \item $D_{ij}^{p^{m_{ij}}}=v(Q_{ij})$.
    \end{enumerate} 
    Notice that by \cite[IV.\textsection2, 2.14]{DG} $$\ker(F_G)\simeq\prod_{j\in I}W_{m_{1j}}^1$$ where $I$ is a finite set and $\sum_{j\in I}m_{1j}=s$ which is the dimension of $\Lie(G)$. Then $$\left(\ker(F_G)\right)^\vee\simeq\prod_{j\in I}\alpha_{p^{m_{1j}}}$$ and thus, by \cite[III.\textsection2, Corollary 2.7]{DG}, to give a generically free rational action of $\ker(F_G)$ on $X$ corresponds to giving a set of pairwise commuting derivations $\{D_{1j}\}_{j\in I}$ on $K$ and with $D_{1j}$ of order $p^{m_{1j}}$ for every $j\in I$, such that all the $p$-powers of these derivations are $K$-linearly independent. Let $\{E_1,\dots,E_s\}$ be the ordered set $\left\{D_{1j}^{p^{k_j}}\mid0\leq k_j<m_{1j},j\in I \right\}.$ This family satisfies the hypothesis of Proposition \ref{pbasis} and therefore there exists a $p$-basis $\{t_1,\dots,t_{s}\}$ of $K/kK^p$ such that $E_i(t_i)=1$ and $E_i(t_j)=0$ for all $j<i$ and $i=1,\dots,s.$   
    By Lemma \ref{actiononquotient}, the rational action of $G_i$ is defined on $X^{(p)}\stackrel{\sim}{\dashrightarrow} X/\ker(F_G)$. Notice that the rational isomorphism is a consequence of the fact that both the field extensions $K^{\ker(F_G)}\subseteq K$ and $kK^p\subseteq K$ have degree $p^s$ (in the first case because the rational action of $\ker(F_G)$ on $X$ is generically free and $\ker(F_G)$ has order $p^s$ and in the second case by Proposition \ref{Liu}) and moreover $kK^p\subseteq K^{\ker(F_G)}$. In particular $$D_{ij}(F_K(\beta))=v(T_{ij})(F_K(\beta))=F_K(v(V_{G_i^\vee}({T_{ij}}))(\beta))$$ for every $\beta\in kK^p$.
    By Lemma \ref{extendtopbasis}, for any $j=1,\dots,r_i$, we then have that $D_{ij}$ is defined using property $1$, provided we choose $x_h^{ij}=D_{ij}(t_h)$ for $h=1,\dots,s$. Therefore the first property is satisfied by definition. We will show that we can choose $x_h^{ij}$ for every $h$ and $j$ in such a way that also properties $2$ and $3$ are satisfied. 

    \vspace{0.5em}
    
    \textit{\underline{Second induction (on $j$)}}: 
    
    We show that if $D_{kl}$ is defined for all $(k,l)<_{LEX}(i,j)$ then we can define $D_{ij}.$ Recall that for the moment $D_{ij}$ is defined on $kK^p$ which is the function field of $X^{(p)}$, so we have that the rational action of $G_i$ is defined on $X^{(p)}\stackrel{\sim}{\dashrightarrow}X/\ker(F_G)$ and we want to extend it to a rational action on $X.$ 

    \vspace{0.5em}
    
    \textit{\underline{Third induction (on $h$)}}: 
    
    We will show that if $D_{ij}$ is defined on $kK^p(t_1,\dots,t_{h-1})$, then we can extend its definition to $kK^p(t_1,\dots,t_{h})$\footnote{Geometrically here we are taking a filtration of $\ker(F_G)$ with successive quotients isomorphic to $\alpha_p$ and considering subquotients of $X$. For example, in the case in which $\ker(F_G)\simeq W_s^1$ with generically free rational action on $X$ given by a derivation $D_1$ of order $p^s$, we have the tower $$K^{D_1}\subseteq K^{D_1^p}=K^{D_1}(t_1)\subseteq\dots\subseteq K^{D_1^{p^{s-1}}}=K^{D_1}(t_1,\dots,t_{s-1})\subseteq K=K^{D_1}(t_1,\dots,t_{s})$$ corresponding to $$X\dashrightarrow X/\alpha_p\dashrightarrow X/W_2^1\dashrightarrow\dots\dashrightarrow X/W_{s-1}^1\dashrightarrow X/W_s^1$$ and $$\alpha_p=\soc(\ker(F_G))\subseteq W_2^1=\ker(F_G)\times_k\ker(V_G^2)\subseteq\dots\subseteq W_{s-1}^1=\ker(F_G)\times_k\ker(V_G^{s-1})\subseteq W_s^1=\ker(F_G).$$ Notice that this phenomenon did not occur in Example \ref{ptorsion} since there the Frobenius kernel was just $\alpha_p$.}. The base step is satisfied since $D_{ij}$ is defined on $kK^p$. We will show that if $D_{ij}$ is defined on $kK^p(t_1,\dots,t_{h-1})$ then the system $$N_h=\left\{\begin{array}{lr}
     D_{kl}D_{ij}(t_h)=D_{ij}D_{kl}(t_h),    & (k,l)<_{LEX}(ij) \\
     D_{ij}^{p^{m_{ij}}}(t_h)=Q_{ij}(D_{k'l'})_{(k',l')<_{LEX}(i,j)}(t_h)
    \end{array}\right.$$ has a solution where the unknown is $x_h^{ij}=D_{ij}(t_h)$. Remark that, for the system to admit a solution is equivalent to having properties $2$ and $3$ satisfied on $kK^p(t_1,\dots,t_h)$. Indeed, by Lemma \ref{commder} and the fact that $P_{ij}=T_{ij}^{p^{m_{ij}}}-Q_{ij}$ is a primitive element, we have that $D_{kl}D_{ij}-D_{ij}D_{kl}$ and $
     D_{ij}^{p^{m_{ij}}}-Q_{ij}(D_{k'l'})_{(k',l')<_{LEX}(i,j)} $ are derivations. Applying then Remark \ref{canbasis}, we obtain that $D_{ij}$ commutes with $D_{kl}$ for every $(k,l)<_{LEX}(i,j)$ and that $D_{ij}^{p^{m_{ij}}}=v(Q_{ij})$ as claimed. Notice that, in particular, $x_{h'}^{kl}=D_{kl}(t_{h'})$ is a solution of the analogous system for every $(k,l,h')<_{LEX}(i,j,h)$ by the assumption that $D_{kl}$ is defined on $K$ for all $(k,l)<_{LEX}(i,j)$ and that $D_{ij}$ is defined on $kK^p(t_1,\dots,t_{h-1})$. We will first show that the system $$S_h=\biggl\{
     D_{kl}D_{ij}(t_h)=D_{ij}D_{kl}(t_h),    \quad (k,l)<_{LEX}(ij)
    \biggr.$$ obtained removing the last equation has a solution and then prove that there exists a solution of it which is also a solution of the last equation of the system $N_h$. Remark that, for the system $S_h$ to admit a solution is equivalent to having property $2$ satisfied on $kK^p(t_1,\dots,t_h)$. First of all, notice that the system $S_h$ is well defined, that is that $D_{ij}$ is defined on $D_{kl}(t_h)$ for $(k,l)<_{LEX}(ij)$: indeed $$E_iD_{kl}(t_h)=D_{kl}E_i(t_h)=0$$ for every $i\geq h$, and thus, by Proposition \ref{pbasis}, $D_{kl}(t_h)$ belongs to $kK^p(t_1,\dots,t_{h-1})$ on which $D_{ij}$ is defined. Let $a_{kl}:=D_{ij}D_{kl}(t_h)$, therefore we are looking for a solution of the system $$S_h=\biggl\{
     D_{kl}(x)=a_{kl},\quad(k,l)<_{LEX}(ij). 
   \biggr.$$
    By induction the set $\{D_{kl}\mid(k,l)<_{LEX}(i,j)\}$ is an ordered set of pairwise commuting differential operators and such that $D_{kl}$ is a derivation of order $p^{m_{kl}}$ on the subfield $$\{a\in K\mid D_{k'l'}(a)=0\quad\forall(k',l')<_{LEX}(k,l)\}$$ by Lemma \ref{actiononquotient}. Moreover,  $$D_{kl}(a_{k'l'})=D_{k'l'}(a_{kl})$$ for all $(k,l),(k',l')<_{LEX}(i,j)$, indeed by induction $$D_{kl}D_{k'l'}(t_h)=D_{k'l'}D_{kl}(t_h)$$ and thus $$D_{kl}(a_{k'l'})=D_{kl}D_{ij}D_{k'l'}(t_h)=D_{ij}D_{kl}D_{k'l'}(t_h)=$$$$D_{ij}D_{k'l'}D_{kl}(t_h)=D_{k'l'}D_{ij}D_{kl}(t_h)=D_{k'l'}(a_{kl})$$ as wished. Notice that we used the fact that $D_{kl}(t_h)$ lies in $kK^p(t_1,\dots,t_{h-1})$ and that, by induction, on this subfield $D_{ij}$ commutes with the previous (LEX-order wise) differential operators. In addition, $D_{kl}^{p^{m_{kl}}}=Q_{kl}(D_{k'l'})_{(k',l')<_{LEX}(k,l)}$ where $Q_{kl}$ is an element of $k[T_{k'l'}]_{(k',l')<_{LEX}(k,l)}$ with vanishing constant coefficient.
    By Corollary \ref{systemsol}, we then know that a solution of the system $S_h$ exists if and only if $$D_{kl}^{p^{m_{kl}}-1}(a_{kl})=\widetilde{Q}_{kl}(a_{k'l'})_{(k',l')<_{LEX}(k,l)}$$ for all $(k,l)<_{LEX}(i,j).$ Write the polynomial $Q_{kl}$ as $$Q_{kl}(T_{k'l'})_{(k',l')<_{LEX}(k,l)}=\sum_{(\alpha,\beta)<_{LEX}(k,l)}\rho_{\alpha\beta}(T_{k'l'})_{(k',l')<_{LEX}(k,l)}T_{\alpha\beta}.$$ Then $$\widetilde{Q}_{kl}(a_{k'l'})_{(k',l')<_{LEX}(k,l)}=\sum_{(\alpha,\beta)<_{LEX}(k,l)}\rho_{\alpha\beta}(D_{k'l'})_{(k',l')<_{LEX}(k,l)}a_{\alpha\beta}=$$$$\sum_{(\alpha,\beta)<_{LEX}(k,l)}\rho_{\alpha\beta}(D_{k'l'})_{(k',l')<_{LEX}(k,l)}D_{ij}D_{\alpha\beta}(t_h)=$$$$D_{ij}\sum_{(\alpha,\beta)<_{LEX}(k,l)}\rho_{\alpha\beta}(D_{k'l'})_{(k',l')<_{LEX}(k,l)}D_{\alpha\beta}(t_h)=D_{ij}Q_{kl}(D_{k'l'})_{(k',l')<_{LEX}(k,l)}(t_h)=$$$$D_{ij}D_{kl}^{p^{m_{kl}}}(t_h)=D_{kl}^{p^{m_{kl}}-1}D_{ij}D_{kl}(t_h)=D_{kl}^{p^{m_{kl}}-1}(a_{kl})$$ as needed, that is the system $S_h$ admits a solution.
    We are left to show that there exists a solution that satisfies also the equation $$D_{ij}^{p^{m_{ij}}-1}(z)=Q_{ij}(D_{k'l'})_{(k',l')<_{LEX}(i,j)}(t_h).$$ Notice that we are looking for a solution of the form $$x_h^{ij}=x+y$$ with $y$ in $$K^{\left\{D_{kl}\mid (k,l)<_{LEX}(i,j)\right\}}=\{a\in K\mid D_{kl}(a)=0\quad\forall(k,l)<_{    LEX}(i,j)\}$$ and $x$ a solution of $S_h$. Moreover,  notice that $x$ lies in $kK^p(t_1,\dots,t_{h-1}),$ indeed we remarked that if $S_h$ has a solution then $D_{ij}$ commutes with $D_{kl}$ for every $(k,l)<_{LEX}(i,j)$ on $kK^p(t_1,\dots,t_h)$, so in particular it commutes with $E_1,\dots,E_s$ which, we recall, are the $p$-powers of the derivations $D_{1j}$, $j\in I.$  Henceforth $$E_\eta(x)=E_\eta D_{ij}(t_h)=D_{ij}E_\eta(t_h)=0$$ for all $\eta\geq h.$ Therefore $x+y$ is a solution of the equation if and only if $$D_{ij}^{p^{m_{ij}}-1}(y)=Q_{ij}(D_{k'l'})_{(k',l')<_{LEX}(i,j)}(t_h)-D_{ij}^{p^{m_{ij}}-1}(x).$$ Let us show that the term on the right hand side lies in $K^{\left\{D_{kl}\mid (k,l)<_{LEX}(i,j)\right\}}.$
    Indeed,  for any $(k,l)<_{LEX}(i,j)$ it holds that $$D_{kl}D_{ij}^{p^{m_{ij}}-1}(x)=D_{ij}^{p^{m_{ij}}-1}D_{kl}(x)=D_{ij}^{p^{m_{ij}}-1}D_{ij}D_{kl}(t_h)=D_{ij}^{p^{m_{ij}}}D_{kl}(t_h)=$$$$Q_{ij}(D_{k'l'})_{(k',l')<_{LEX}(i,j)}D_{kl}(t_h)=D_{kl}Q_{ij}(D_{k'l'})_{(k',l')<_{LEX}(i,j)}(t_h)$$ where we used the fact that $x, D_{kl}(t_h)\in kK^p(t_1,\dots,t_{h-1})$ and that $x$ is a solution of the system $S_h$.
    Notice that $K^{\left\{D_{kl}\mid (k,l)<_{LEX}(i,j)\right\}}$ is a subfield of $kK^p=K^{\left\{D_{1j}\mid j\in I\right\}}$ and that $D_{ij}$ is a derivation  of order $p^{m_{ij}}$ on $K^{\left\{D_{kl}\mid (k,l)<_{LEX}(i,j)\right\}}.$ Therefore, by Lemma \ref{onepreimage}, $y$ exists if and only if $$D_{ij}\left(Q_{ij}(D_{k'l'})_{(k',l')<_{LEX}(i,j)}(t_h)-D_{ij}^{p^{m_{ij}}-1}(x)\right)=0$$ which is satisfied since $$D_{ij}^{p^{m_{ij}}}(x)=Q_{ij}(D_{k'l'})_{(k',l')<_{LEX}(i,j)}(x)=\sum_{(\alpha,\beta)<_{LEX}(i,j)}\rho_{\alpha\beta}(D_{k'l'})_{(k',l')<_{LEX}(i,j)}D_{\alpha\beta}(x)=$$$$\sum_{(\alpha,\beta)<_{LEX}(i,j)}\rho_{\alpha\beta}(D_{k'l'})_{(k',l')<_{LEX}(i,j)}D_{ij}D_{\alpha\beta}(t_h)=D_{ij}Q_{ij}(D_{k'l'})_{(k',l')<_{LEX}(i,j)}(t_h)$$ where the first equality is again a consequence of the fact that $x\in kK^p(t_1,\dots,t_{h-1})$. Notice that the action constructed is generically free since it extends the generically free action of $\soc(G)$ (see Proposition \ref{kerfrob}).
    
    For the general case of a variety $X$ of dimension $\dim(X)=\ell\geq s$, consider the infinitesimal commutative unipotent $k$-group scheme $G\times_k\alpha_p^{\ell-s}$ where $s=\dim_k(\Lie (G))$. Then $\dim_k(\Lie (G\times_k\alpha_p^{\ell-s}))=\ell$ and thus by what we just proved $X$ admits a generically free rational action of $G\times_k\alpha_p^{\ell-s}$. In particular, it admits a generically free rational action of its subgroup $G$. Moreover,  any generically free rational action of $\ker(F_G^r)$ on $X$ extends to a generically free rational action of $\ker(F_G^r)\times_k\alpha_p^{l-s}$ in the following way: consider the set of derivations $\{E_1,\dots,E_s\}$ defining the action of $\ker(F_G)$ on $K=L(t_1,\dots,t_s)$ where $L=k\left(X/\ker(F_G)\right)$ as described in the first induction and complete it to a basis $\{E_1,\dots,E_s,\partial_{s+1},\dots,\partial_\ell\}$ where the $\partial_i$'s are as in Remark \ref{canbasis}. One checks easily that the elements of this basis commute pairwise and that this implies that the $\partial_i$'s commute with all differential operators defining the rational action of $\ker(F_G^r)$ on $X$. By the case treated previously, the rational action of $\ker(F_G^r)\times_k\alpha_p^{l-s}$ extends to a generically free rational action of $G\times_k\alpha_p^{\ell-s}$ and thus, in particular, to a generically free rational action of $G$.
\end{proof}

\begin{rmk}\label{thm+Brion}
    Brion shows that for any $l,n\geq1$ there exist generically free rational actions of $\mu_{p^l}^n$ on any variety $X$ of dimension $\geq n$ \cite[Remark 3.8]{brion2022actions}.
    Putting together Brion's result and Theorem \ref{mainthm} one can prove that if $k$ is perfect and $G$ is an infinitesimal commutative trigonalizable $k$-group scheme with Lie algebra of dimension $s$, then for every $k$-variety $X$ of dimension $\geq s$ there exist generically free rational actions of $G$ on $X.$ By Theorem \ref{affinecommgrpschms}, $G\simeq G^u\times_k\prod_{i=1}^{s_2}\mu_{p^{n_i}}$ for some integers $n_i\geq1$ and we denote by $s_1$ and $s_2$ respectively the dimension of the Lie algebra of the unipotent part $G^u$ of $G$ and of its diagonalizable part. Briefly, one considers a set of derivations $\{E_1,\dots,E_{s_1}\}$ defining a generically free rational action of $\ker(F_{G^u})$ on $K=L(t_1,\dots,t_{s_1})$ where $L=k\left(X/\ker(F_{G^u})\right)$ as described in the first part of the proof of the Theorem and complete it to a $K$-linearly independent set $\{E_1,\dots,E_{s_1},t_{s_1+1}\partial_{s_1+1},\dots,t_{s_1+s_2}\partial_{s_1+s_2}\}$, with $\partial_i$'s as in Remark \ref{canbasis}. One checks easily that the elements of this basis commute pairwise and they thus define a generically free rational action of $\ker(F_G)$. Moreover, we can extend it as before to a generically free rational action of $\ker(F_G^r)$ (and so also of $G$) on $X$ for any $r\geq1$.
\end{rmk}

We conclude this section with two examples answering to some questions of Brion \cite{brion2022actions} and Fakhruddin \cite{Fakhruddin}.
Notice that if an infinitesimal commutative unipotent $k$-group scheme $G$ with $n$-dimensional Lie algebra can be embedded in a smooth connected $n$-dimensional algebraic group $\mathcal{G}$, then $G$ acts generically freely on it (by multiplication). Brion asked \cite[\textsection1]{brion2022actions} if there are examples of generically free rational actions on curves of infinitesimal commutative unipotent group schemes that are not subgroup schemes of a smooth connected one-dimensional algebraic group. Recall that if $\mathcal{G}$ is a smooth connected one-dimensional $k$-algebraic group, then either $\mathcal{G}$ is affine and $\mathcal{G}_{\overline{k}}\simeq\G_{m,\overline{k}}$ or $\mathcal{G}_{\overline{k}}\simeq\G_{a,\overline{k}}$ or $\mathcal{G}$ is an elliptic curve. 

\begin{example}\label{answerbrion}
Let $G$ be an infinitesimal commutative unipotent group scheme with one-dimensional Lie algebra, of order $>p^2$, with non-trivial Verschiebung and $p=0$. By Theorem \ref{mainthm} there exist generically free rational $G$-actions on any curve, since $G$ has one-dimension Lie algebra. Moreover such a $G$ cannot be contained in a smooth connected one-dimensional algebraic group. Indeed if it was the case, then it would be true also over $\overline{k}$ and $G_{\overline{k}}$ is not a subgroup neither of $\G_{m,\overline{k}}$ (since $G_{\overline{k}}$ is unipotent) nor of $\G_{a,\overline{k}}$ (since $V_{G_{\overline{k}}}\neq0$). Therefore, if $G$ is a subgroup scheme of a smooth connected one-dimensional algebraic group, then it is a subgroup of an elliptic curve $E$. In particular, since $p=0$, $G$ has to be contained in its $p$-torsion $E[p]$, but this cannot happen since $E[p]$ has order $p^2$ (see for example \cite{MumfordAbVar} page 137). A concrete example is given by the infinitesimal commutative unipotent $k$-group scheme $$G=\ker\left(F^2-V\colon W_3^3\rightarrow W_3^3\right)=\spec\left(k[T_0,T_1]/(T_0^p,T_1^{p^2}-T_0)\right)$$ which has one-dimensional Lie algebra, non-trivial Verschiebung, $p=V_GF_G=F_G^3=0$ and order $p^3$.
\end{example}

The following example goes in the direction of studying also non-commutative group schemes with generically free rational actions on curves.
Indeed, the question arises if all infinitesimal unipotent group schemes with one-dimensional Lie algebra are commutative (see both \cite[Remark 2.10]{Fakhruddin} and \cite[\textsection1]{brion2022actions}). The following example shows that it is not the case.

\begin{example}\label{noncommutative}
    Consider the infinitesimal unipotent non-commutative $k$-group scheme $G=\spec(A)$ where $$A=k[T_0,T_1]/\left(T_0^{p^n}, T_1^p-T_0\right)$$ with $n\geq2$ an integer and comultiplication given by $$\Delta(T_0)=T_0\otimes1+1\otimes T_0$$ and $$\Delta(T_1)=T_1\otimes1+1\otimes T_1+T_0^{p^{n-1}}\otimes T_0^{p^{n-2}}.$$ In this case $$A^\vee=k\langle U_0\dots,U_{n}\rangle/(U_0^p,\dots,U_n^p,U_iU_j-U_jU_i,U_nU_{n-1}-U_{n-1}U_n-U_0)_{i,j=0,\dots n,(i,j),(j,i)\neq(n,n-1)}$$ where $U_0(T_1)=1$ and $U_i(T_0^{p^{i-1}})=1$ and zero elsewhere. The Hopf algebra $A^\vee$ is non-commutative: the only non-commutative relation is given by $U_nU_{n-1}-U_{n-1}U_n=U_0$, while its comultiplication is defined on the $U_i$'s as for the Witt vectors, that is \begin{align*}
        U_0&\mapsto U_0\otimes1+1\otimes U_0,\\
        U_1&\mapsto U_1\otimes1+1\otimes U_1-\sum_{k=1}^{p-1}\frac{1}{p}\binom{p}{k}U_0^k\otimes U_0^{p-k},\\
        ...
    \end{align*} Notice that this makes sense since $U_0,\dots,U_{n-1}$ commute. Let $X/k$ be a curve and $K=k(X)=kK^p(t)$ be its function field, for $t$ a $p$-generator of $K$ over $kK^p$. A generically free rational action of $G$ on  $X$ is given by defining an $A^\vee$-module algebra structure on $K$ setting $v(U_i)=D_i=\partial_{p^i}$ for $i=0,\dots,n-1$ and $v(U_n)=D_n=\partial_{p^n}-t^{p^{n-1}}\partial_1.$ Notice that $\partial_{p^n}(t^{p^{n-1}})=0$ and thus $\partial_{p^n}$ commutes with $t^{p^{n-1}}\partial_1$. Therefore $$D_n^p=\partial_{p^n}^p-(t^{p^{n-1}}\partial_1)^p=\partial_{p^n}^p-t^{p^{n}}\partial_1^p=0$$ where for the second equality we used that also $\partial_1(t^{p^{n-1}})=0$ and for the last that $\partial_{p^n}^p=\partial_1^p=0.$ Of course this rational action can be extended to a generically free rational action of $G$ on any variety of positive dimension: if $X$ has dimension $s\geq0$ then $K=k(X)=kK^p(t=t_1,t_2,\dots,t_s)$ and a generically free action of $G$ on $X$ is given by the same set of differential operators as in the previous case of a curve. These examples arise as closed subgroup schemes of non-commutative extensions of $\G_a$ by itself (see \cite[II.\textsection3, 4]{DG}) and there are many of them. When $k$ has characteristic $2$ and $n=3$, $G$ is a subgroup scheme of $\mathrm{PGL}_{2,k}$ (see \cite[Theorem 1.1]{gouthier2024unexpectedsubgroupschemespgl2k}).
\end{example}

\section{Faithful rational actions}\label{faithactions}

This last section is devoted to Dolgachev's conjecture revisited for infinitesimal group schemes and more generally to studying faithful rational actions of infinitesimal group schemes. Dolgachev made the following conjecture for the Cremona group over a field of positive characteristic.

\begin{namedthm*}{Conjecture}
If $k$ is a field of characteristic $p>0$, the Cremona group $\mathrm{Cr}_n(k)$ does not contain elements of order $p^s$ for $s>n$
\cite[Conjecture 37]{Dolgachev}.
\end{namedthm*}

The conjecture is true for $n=1$ since $\mathrm{PGL}_2(k)\simeq \Aut_k(k(t))$ does not contain elements of order $p^2$ if $char(k)=p>0$. Moreover,  it was proven for $n=2$ \cite{Dolgachev1}. The conjecture can be rephrased in the following way: if there exists a faithful rational action of a finite commutative $p$-group $G$ on $\mathbb{P}^n_k$ then $p^n_G=0$, where $p_G$ is the multiplication by $p$ morphism on $G$. Indeed there is a natural correspondence between faithful actions of a finite group $G$ on $k(t_1,\dots,t_n)$ and faithful rational actions of the corresponding constant group scheme on $\Pj^n_k$. In fact, an action $G\times k(t_1,\dots,t_n)\rightarrow k(t_1,\dots,t_n)$ can be extended naturally to a $k[G]$-module algebra structure
    $k[G]\rightarrow \End_k(k(t_1,\dots,t_n))$,
where $k[G]$ is the group algebra over $G$ with its Hopf algebra structure (see \cite[Chapter 2.2]{Waterhouse}) and this gives a faithful rational action of the constant group scheme $G$ on $\Pj^n_k$. The analogous of Dolgachev's conjecture in our context is given by Proposition \ref{actionkerF}, that we will now prove after a couple of preliminary results.

\begin{prop}\label{ifffaithful}
Let $G$ be a finite $k$-group scheme and $X$ a $k$-scheme endowed with a $G$-action. The action is faithful if and only if the induced action of $\soc(G)$ is faithful.
\end{prop}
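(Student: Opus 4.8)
The plan is to reduce the statement to the behaviour of a single object, the kernel of the action, together with one structural property of the socle already recorded in Lemma \ref{soc1}. Write $\rho\colon G\to\Aut_X$ for the homomorphism defining the action and set $N:=C_G(X)=\ker(\rho)$. This is a normal closed $k$-subgroup scheme of $G$ (it is the centralizer of $X$ in $G$, and kernels of group functor homomorphisms are normal), and by the definition of faithful action the $G$-action is faithful precisely when $N$ is trivial. So the whole proposition is a comparison between $N$ and its interaction with $\soc(G)$.

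The first concrete step is to identify the kernel of the restricted action. Since $\soc(G)$ is a (normal) closed $k$-subgroup scheme of $G$ by Lemma \ref{soc1}(2), the induced $\soc(G)$-action is the composite $\rho\circ\iota$, where $\iota\colon\soc(G)\hookrightarrow G$ is the inclusion; its kernel is the scheme-theoretic preimage $\iota^{-1}(N)$, which as a subgroup functor of $\soc(G)$ is exactly the intersection $\soc(G)\times_G N$. Hence the $\soc(G)$-action is faithful if and only if $\soc(G)\times_G N$ is trivial. With these two reformulations in hand the equivalence is immediate. If the $G$-action is faithful then $N$ is trivial, so $\soc(G)\times_G N$ is trivial and the $\soc(G)$-action is faithful as well. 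For the converse I would argue by contraposition: if the $G$-action is not faithful, then $N$ is a non-trivial normal closed $k$-subgroup scheme of $G$, and Lemma \ref{soc1}(3) applies verbatim to yield that $\soc(G)\times_G N$ is non-trivial, i.e. the $\soc(G)$-action is not faithful either.

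The substance of the proof is therefore entirely carried by Lemma \ref{soc1}(3); the rest is formal. The only point that requires care, and which I expect to be the main (modest) obstacle, is the set-up in the first two paragraphs: namely verifying that $N=\ker(\rho)$ is genuinely a normal closed $k$-subgroup scheme of $G$ (so that Lemma \ref{soc1}(3) is applicable to it) and that passing to the $\soc(G)$-action replaces this kernel by the scheme-theoretic intersection $\soc(G)\times_G N$. Once this identification is in place, no computation remains; in particular one never needs to inspect the action itself beyond its kernel.
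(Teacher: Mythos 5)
Your proof is correct and follows essentially the same route as the paper: both identify faithfulness with triviality of the kernel $C_G(X)$, note that this kernel is a normal closed $k$-subgroup scheme whose intersection $\soc(G)\times_G C_G(X)$ is the kernel of the restricted $\soc(G)$-action, and then invoke Lemma \ref{soc1}(3) to conclude. No gap; the argument matches the paper's proof step for step.
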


\begin{proof} 
The $G$-action is faithful if and only if the centralizer $C_G(X)$ is trivial. By Lemma \ref{soc1}, since $C_G(X)$ is a normal $k$-subgroup scheme of $G$, the centralizer is trivial if and only if $\soc(G)\times_GC_G(X)=C_{\soc(G)}(X)$ is trivial, that is if and only if the induced $\soc(G)$-action is faithful.
\end{proof}

The following result generalizes \cite[Lemma 5.3]{brion2022actions}.

\begin{corollary}\label{freeifffaith}
Let $G$ be an infinitesimal commutative $k$-group scheme, acting rationally on a $k$-variety $X$. Then the rational $G$-action is generically free if and only if it is faithful and the induced action of $\soc((G_{\overline{k}})^u)$ is generically free, where $(G_{\overline{k}})^u$ is the maximal unipotent subgroup scheme contained in $G_{\overline{k}}$. In particular, if $\soc((G_{\overline{k}})^u)\subseteq\alpha_p$, then the $G$-action is faithful if and only if it is generically free.
\end{corollary}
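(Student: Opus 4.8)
The plan is to reduce the whole statement to the computation of the stabilizer of the generic point of the socle, and then to split that stabilizer into its unipotent and multiplicative parts. First I would pass to the algebraic closure: by Proposition \ref{bschngenfree} and Lemma \ref{bschngenfaithful} both generic freeness and faithfulness may be checked after base change to $\overline{k}$, so I may assume $k=\overline{k}$. Then $k$ is perfect and $G$ is trigonalizable, with $G\simeq G^u\times_kG^m$, and by Lemma \ref{soc1}(6) together with Lemma \ref{diag} one gets $\soc(G)=\soc(G^u)\times_k\soc(G^m)=\alpha_p^r\times_k\mu_p^n$, where $\alpha_p^r=\soc(G^u)=\soc((G_{\overline{k}})^u)$. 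By Proposition \ref{ifffaithful} faithfulness of the $G$-action is equivalent to faithfulness of the $\soc(G)$-action, and by Proposition \ref{kerfrob}(2) generic freeness of the $G$-action is equivalent to generic freeness of the $\soc(G)$-action, so it suffices to analyze $\soc(G)=\alpha_p^r\times_k\mu_p^n$. The implication ``generically free $\Rightarrow$ faithful'' is immediate, since $C_G(X)_K\subseteq\stab_G(\eta)$; hence the real content is the converse.

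For the converse, write $K=k(X)$ and set $S=\stab_{\soc(G)}(\eta)\subseteq(\alpha_p^r\times_k\mu_p^n)_K$. By Remark \ref{stabsgrp} we have $S\cap(\alpha_p^r)_K=\stab_{\alpha_p^r}(\eta)$, which is trivial by the hypothesis that $\soc(G^u)=\alpha_p^r$ acts generically freely. Thus the projection $S\to(\mu_p^n)_K$ has trivial kernel; being a monomorphism of finite $K$-group schemes it is a closed immersion, so $S$ is a closed subgroup scheme of the diagonalizable group $(\mu_p^n)_K$ and is therefore itself of multiplicative type. The decisive structural input is then that $\Hom(S,(\alpha_p^r)_K)=0$, because the source is of multiplicative type and the target is unipotent; hence the projection $S\to(\alpha_p^r)_K$ vanishes and $S\subseteq(\mu_p^n)_K$, i.e. $S=\stab_{\mu_p^n}(\eta)$, again by Remark \ref{stabsgrp}. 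This middle step, where the product decomposition $\soc(G)=\alpha_p^r\times_k\mu_p^n$ prevents the generic stabilizer from straddling the unipotent and multiplicative directions, is the part I expect to be the main obstacle: it is exactly what allows the hypothesis on $\soc(G^u)$ and the diagonalizable case to be used independently.

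It then remains to show $S=\stab_{\mu_p^n}(\eta)$ is trivial. Faithfulness of the product action gives $C_{\mu_p^n}(X)=\mu_p^n\cap C_{\soc(G)}(X)=1$, so the $\mu_p^n$-action is faithful, and I would finish by invoking the (known) diagonalizable case: a faithful rational action of a diagonalizable group scheme is generically free. For completeness I would recall the argument via the dictionary between $\mu_p^n=D(\Gamma)$-actions on $\spec K$ and $\Gamma$-gradings $K=\bigoplus_{\gamma\in\Gamma}K_\gamma$, with $\Gamma=(\Z/p\Z)^n$: since $K$ is a field the support is a subgroup $\Gamma'\subseteq\Gamma$, the kernel of the action is $D(\Gamma/\Gamma')$, so faithfulness forces $\Gamma'=\Gamma$, and then $\spec K\to\spec K^{\mu_p^n}$ is a $\mu_p^n$-torsor, hence free. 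Therefore $S=1$, the $\soc(G)$-action is generically free, and by Proposition \ref{kerfrob}(2) so is the $G$-action.

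Finally, for the ``in particular'' clause, note that $\soc(G^u)=\alpha_p^r\subseteq\alpha_p$ means $r\le1$. If $r=0$ then $\soc((G_{\overline{k}})^u)$ is trivial, so its generic freeness holds vacuously; if $r=1$ then faithfulness of $G$ forces (by Proposition \ref{ifffaithful} and the centralizer intersection above) the $\alpha_p=\soc((G_{\overline{k}})^u)$-action to be faithful, hence generically free, since an $\alpha_p$-action corresponds to a derivation $D$ with $D^p=0$, faithful exactly when $D\neq0$, in which case $K/K^{D}$ is an $\alpha_p$-torsor and the action is free. In either case the extra hypothesis on $\soc((G_{\overline{k}})^u)$ in the main equivalence is automatic, so faithful $\Rightarrow$ generically free; combined with the reverse implication this yields the stated equivalence.
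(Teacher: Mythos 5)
Your proposal is correct and takes essentially the same route as the paper's proof: reduce to $k=\overline{k}$ via Proposition \ref{bschngenfree} and Lemma \ref{bschngenfaithful}, use the splitting into unipotent and diagonalizable factors, show the generic stabilizer cannot meet the unipotent direction (so it lies in the diagonalizable factor), and conclude by the fact that a faithful rational action of a diagonalizable group scheme is generically free. The only differences are organizational: you first pass to $\soc(G)=\alpha_p^r\times_k\mu_p^n$ via Propositions \ref{ifffaithful} and \ref{kerfrob}, and your monomorphism/closed-immersion/$\Hom(\text{mult},\text{unip})=0$ argument spells out the step that the paper only asserts, namely that the stabilizer ``should be contained in $G_K^d$''.
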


\begin{proof}
 By Lemma \ref{bschngenfaithful} and Proposition \ref{bschngenfree}, we may suppose $k=\overline{k}$. The \textit{only if} part is clear. We prove the other implication. We first prove the case $G$ diagonalizable. In this case, we have to prove that if the $G$-action is faithful then it is generically free.  By the anti-equivalence of categories between diagonalizable group schemes and abelian groups, if the stabilizer of the generic point $\spec(K)$ of $X$ is not trivial over $K$, then it comes from a non-trivial subgroup of $G$ over $k$, which then acts trivially, meaning that the action is not faithful. 

Now we pass to the general case. Since $k=\overline{k}$ then $G$ is isomorphic to $G^u\times_kG^d$, where $G^d$ is diagonalizable. Since the $\soc(G^u)$-action is generically free, the stabilizer at the generic point $\spec(K)$ should be contained in $G_K^d$, but this is not possible since the $G^d$-action is generically free by the diagonalizable case.
For the last sentence we observe that if $\soc(G^u)$ is a subgroup scheme of $\alpha_p$ and the $G$-action is faithful, then the $\soc(G^u)$-action is generically free. So we can apply the first part of the corollary. 
\end{proof}

\begin{rmk}\label{wittvectors}
The above Corollary applies, for instance, to any infinitesimal subgroup scheme $G$ of $W_n$, for some $n$: indeed in this case $\soc(G)=\alpha_p$ (see Example \ref{exasoc}).
\end{rmk}

In the following corollary we essentially get the second part of \cite[Lemma 3.7]{brion2022actions}

\begin{corollary}
    Let $G$ be an infinitesimal $k$-group scheme acting faithfully on a $k$-variety of dimension $r$. If $H$ is a normal $k$-subgroup scheme of $G$ of multiplicative type such that $\dim_k(\Lie (H))=r$, then $G$ is of multiplicative type and $\dim_k(\Lie (G))=r$.
\end{corollary}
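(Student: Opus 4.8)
The plan is to reduce to the algebraically closed case, exploit the tightness $\dim_k\Lie(H)=\dim X$ to pin down $\Lie(G)$, and finally upgrade the resulting structure on $\ker(F_G)$ to all of $G$. First I would reduce to $k=\overline{k}$ using Lemma \ref{bschngenfaithful} and Proposition \ref{bschngenfree}. Since $H$ is normal, $C_H(X)=C_G(X)\times_GH$ is trivial, so $H$ acts faithfully; being of multiplicative type it is diagonalizable over $\overline{k}$, so Corollary \ref{freeifffaith} (its unipotent part is trivial) turns faithfulness into generic freeness. Proposition \ref{dimlie} then gives $\dim_k\Lie(H)\le\dim X=r$, and the hypothesis forces equality. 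Writing $K=k(X)$ and $\eta$ for the generic point, generic freeness means $\Lie(\stab_H(\eta))=0$, i.e. the natural map $\Lie(H)\otimes_kK\to\mathrm{Der}_k(K)$ is injective; as both sides have $K$-dimension $r$, it is an isomorphism. Finally, rigidity of groups of multiplicative type \cite{SGA3} applied to the connected group $G$ shows that the normal subgroup $H$ is central.

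The crux is to prove $\Lie(G)=\Lie(H)$. The action of the height-one group $\ker(F_G)$ corresponds to a homomorphism of restricted Lie algebras $\rho\colon\Lie(G)=\Lie(\ker(F_G))\to\mathrm{Der}_k(K)$ (Remark \ref{liefrobkern}, Proposition \ref{infinitesimalactions}), and faithfulness ($C_{\ker(F_G)}(X)=1$) makes $\rho$ injective. Choose a $k$-basis $h_1,\dots,h_r$ of $\Lie(H)$; since $H$ is diagonalizable its Lie algebra is toral, so $D_i:=\rho(h_i)$ satisfy $D_i^{[p]}=D_i$ and, by the isomorphism above, form a $K$-basis of $\mathrm{Der}_k(K)$. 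For $\xi\in\Lie(G)$ write $\rho(\xi)=\sum_if_iD_i$ with $f_i\in K$. Centrality of $H$ gives $[\rho(\xi),D_i]=0$, whence $D_i(f_j)=0$ for all $i,j$ and $f_j\in\bigcap_i\ker(D_i)=kK^p$. Since the $f_j$ are annihilated by all $D_i$, the restricted power has no correction terms, so $\rho(\xi)^{[p]}=\sum_jf_j^pD_j$ and inductively $\rho(\xi^{[p^m]})=\sum_jf_j^{p^m}D_j$; as $\Lie(G)$ is finite dimensional the family $\{\xi^{[p^m]}\}_m$ spans a finite-dimensional $k$-space, so each $f_j$ satisfies a nontrivial additive polynomial over $k$ and is therefore algebraic over $k=\overline{k}$, i.e. $f_j\in k$. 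Then $\rho(\xi)=\rho(\sum_jf_jh_j)$, and injectivity of $\rho$ yields $\xi\in\Lie(H)$. Hence $\Lie(G)=\Lie(H)$, so $\dim_k\Lie(G)=r$ and $\ker(F_G)=\ker(F_H)$ is a central subgroup scheme of multiplicative type.

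It remains to deduce that $G$ itself is of multiplicative type, which I would obtain by induction on the height, proving the general statement that an infinitesimal group scheme whose Frobenius kernel is central and of multiplicative type is of multiplicative type. In the inductive step put $\overline{G}=G/\ker(F_G)=\Ima(F_G)\subseteq G^{(p)}$; its Frobenius kernel $\overline{G}\cap(\ker(F_G))^{(p)}$ is again central and of multiplicative type, so $\overline{G}$ is of multiplicative type by induction, and in particular connected and diagonalizable. The commutator descends to a bimultiplicative morphism $\overline{G}\times\overline{G}\to\ker(F_G)$ between diagonalizable groups with $\overline{G}$ connected; by Cartier duality such a morphism corresponds to a homomorphism out of a connected group into an étale one, hence it vanishes, so $G$ is commutative. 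A commutative extension of a multiplicative type group by a multiplicative type group is again of multiplicative type (dualize to an extension of étale by étale), which finishes the induction and the proof.

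The main obstacle is the second paragraph: converting the tightness $\dim_k\Lie(H)=\dim X$ together with centrality of $H$ into the exact equality $\Lie(G)=\Lie(H)$, through the $p$-semilinear/algebraicity argument over $\overline{k}$. The reductions of the first paragraph are routine given the earlier results, and the third paragraph is a clean structural wrap-up once $\ker(F_G)$ is known to be central of multiplicative type.
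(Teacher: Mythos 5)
Your proof is correct, but it takes a genuinely different route from the paper's. Both arguments begin identically: rigidity of multiplicative type groups makes the normal subgroup $H$ central, and one reduces to $k=\overline{k}$. From there the paper argues by contradiction in a few lines: if $G$ were not diagonalizable it would contain a copy of $\alpha_p$ (\cite[IV \S 3, Lemma 3.7]{DG}); centrality of $H$ then produces the commutative subgroup $H'=H\times_k\alpha_p\subseteq G$ with $\soc((H')^u)=\alpha_p$, so Corollary \ref{freeifffaith} upgrades the faithful $H'$-action to a generically free one, contradicting Proposition \ref{dimlie} since $\dim_k(\Lie(H'))=r+1$; diagonalizability of $G$ and the dimension count then follow at once. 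You instead prove the stronger intermediate statement $\Lie(G)=\Lie(H)$ by a direct computation: generic freeness of the $H$-action identifies $\Lie(H)\otimes_kK$ with $\mathrm{Der}_k(K)$, centrality kills the coefficients' derivatives, Jacobson's formula with vanishing correction terms plus finite-dimensionality of $\Lie(G)$ forces the coefficients to satisfy an additive polynomial over $k=\overline{k}$, hence to be constants; then you conclude via a standalone structural lemma (proved by induction on the height, using the vanishing of the commutator pairing into a constant Hom-scheme and Cartier duality) that an infinitesimal group whose Frobenius kernel is central and of multiplicative type is itself of multiplicative type. The paper's route is much shorter because it exploits the socle formalism and the $\alpha_p$-dichotomy already in place; yours is longer and needs more machinery (Jacobson's formula, Hom-schemes of diagonalizable groups, exactness of Cartier duality), but it buys finer information -- the literal equality $\Lie(G)=\Lie(H)$, hence $\ker(F_G)=\ker(F_H)$, and the structural lemma of the last paragraph, both of independent interest -- and it avoids invoking \cite[IV \S 3, Lemma 3.7]{DG} and Corollary \ref{freeifffaith} for a possibly non-commutative $G$. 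One small wording fix: you should choose the basis $h_1,\dots,h_r$ of $\Lie(H)$ to consist of toral elements (such a basis exists since $H\simeq\prod_i\mu_{p^{a_i}}$ over $\overline{k}$); an arbitrary basis of a toral Lie algebra need not satisfy $h_i^{[p]}=h_i$.
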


\begin{proof}
By \cite[IV,\S 1, Corollary 4.4]{DG} we have that $H$ is central in $G$.
Now we can suppose that $k$ is algebraically closed, then $H$ is diagonalizable. If $G$ is not diagonalizable then $G$ contains a $k$-subgroup scheme isomorphic to $\alpha_p$ (\cite[IV \S 3, Lemma 3.7]{DG}). Then $H'=H\times_k \alpha_p$ is contained in $G$ since $H$ is central. Now $H'$ is commutative, $\soc(H')=\ker F_H\times_k\alpha_p$ and $\soc((H')^u)=\alpha_p$. Therefore, by Corollary \ref{freeifffaith}, the action of $H'$ is generically free, but this is impossible by Proposition \ref{dimlie} since $\dim_k(\Lie (H'))>r$. 
So $G$ is diagonalizable, its action is generically free (again by Corollary \ref{freeifffaith}), and $\dim_k(\Lie (G))$ can not be bigger than $r$.
\end{proof}

\begin{example}
    The condition on the normality of $H$ is crucial. Consider for example the $k$-group scheme $G=\alpha_p\rtimes \mu_p$ (where the action of $\mu_p$ on $\alpha_p$ is given by multiplication on the left) and the action on the affine line $G\times_k\A^1_k\rightarrow\A^1_k$ given by $(a,b)\cdot x\mapsto ax+b$. The $G$-action is faithful but not generically free (see Proposition \ref{dimlie}). The stabilizer of the generic point $\eta$ is $$\stab_G(\eta)=\spec\left(k(x)[T,1/T,S]/(xT+S-x,T^p-1,S^p)\right)$$ with comultiplication given by $\Delta(T)=T\otimes T$ and $\Delta(S)=S\otimes1+T\otimes S$. This is also a counterexample to Corollary \ref{freeifffaith} in the non-commutative case. Indeed $\soc(G)=\alpha_p$ and $\alpha_p$ acts freely. It is then necessary, in the non-commutative case, to look at the action of $\ker(F_G)$, as seen in the first statement of Proposition \ref{kerfrob}. 
\end{example} 

\begin{namedthm*}{Proposition \ref{actionkerF}}
    \textit{Let $G$ be an algebraic $k$-group scheme with commutative Frobenius kernel and $X$ be a $k$-variety of dimension $n$. If there exists a faithful rational $G$-action on $X$, then $s=dim_k(\Lie (\ker(F_{G})^m))\leq n$ and $V_{\ker(F_{G})^u}^{n-s}=0$, where $\ker(F_G)^m$ is the maximal $k$-subgroup scheme of multiplicative type of $\ker(F_G)$ and $\ker(F_G)^u:=\ker(F_G)/\ker(F_G)^m$.} 
\end{namedthm*}

\begin{proof}
We may suppose that $G$ is infinitesimal of height one and that $k$ is algebraically closed. Then $$G\simeq G^u\times_kG^m=\prod_{i\in I}W_{n_i}^1\times_k\mu_p^s.$$ Clearly $s=\dim_k(\Lie (G^m))\leq n$ since a faithful rational $\mu_p^s$-action is generically free. Let $l=\max_{i\in I}\{n_i\}$. By Corollary \ref{freeifffaith} the induced faithful rational action of $W_l^1\times_k\mu_p^s$ on $X$ is generically free. Hence $l+s\leq n$ and thus $V^{n-s}_{G^u}=0$.
\end{proof}

Notice that if $k$ is a perfect field, then the above Proposition tells us that if $G$ is an infinitesimal commutative trigonalizable $k$-group scheme such that there exists a faithful rational $G$-action on a $k$-variety of dimension $n$, then $\ker(F_G)^u\subseteq\left(W_{n-s}^1\right)^l$ for some $l\geq1$ where $s=\dim_k(\Lie(G^d))$. In particular, if there exists a faithful rational $G$-action on a curve, then $\ker(F_G)^u\subseteq\alpha_p^l$ for some $l\geq1$. 

The converse implication of Proposition \ref{actionkerF} does not always hold true. In the diagonalizable case, these actions are well understood and the converse statement is known.
Notice that by Remark \ref{thm+Brion} the converse of Proposition \ref{actionkerF} holds as well, over a perfect field, for infinitesimal commutative trigonalizable $k$-group schemes with Lie algebra of dimension upper bounded by the dimension of $X$. In particular, if $s=\dim_k(\Lie (\ker(F_{G})^d))$ and  $\dim_k(\Lie (G))\leq n$, then $V_{\ker(F_{G})^u}^{n-s}=0$. We will now give a counterexample to the converse implication of Proposition \ref{actionkerF}: we exhibit an infinitesimal commutative unipotent $k$-group scheme $G$ such that $V_{\ker(F_G)}=0$ but for which there are no faithful rational $G$-actions on any curve. We then keep investigating other cases in which the converse of Proposition \ref{actionkerF} holds.

\begin{example}\label{counterexample}
 Consider the $k$-subgroup scheme $G$ of $W_2\times_kW_2$ represented by the Hopf algebra $$k[T_0,T_1,U_0,U_1]/(T_0^p,T_1^p-T_0,U_0^p,U_1^p-U_0).$$ The $k$-group scheme $G$ is self-dual and, if $k$ is algebraically closed, $G\simeq E[p]\times_kE[p]$ for $E$ a supersingular elliptic curve over $k$ (that is $G$ is the $p$-torsion of a superspecial abelian surface). Moreover, $\dim_k(\Lie(G))=2$ and ${V_{\ker(F_G)}}=0$. Therefore, by Proposition \ref{dimlie}, we know that there is no generically free rational $G$-action on any curve. Let us show that moreover there is no faithful rational $G$-action on any curve either. Let $X$ be a curve and $K=k(X)$ be its function field. Suppose that there exists a faithful rational $G$-action on $X$ defined by the module algebra structure $$v:k[T_0,T_1,U_0,U_1]/(T_0^p,T_1^p-T_0,U_0^p,U_1^p-U_0)\rightarrow \mathrm{Diff}_k(K).$$ The differential operator $v(T_0)$ is a derivation on $K$ of order $p$, thus, by Lemma \ref{onepreimage}, there exists $x\in K$ such that $v(T_0)(x)=1$. Then, $v(T_0)=\partial_x$, the only $k$-linear derivation on $K=kK^p(x)$ such that $\partial_x(x)=1$. As a consequence, $v(U_0)=f_1\partial_x$ for some $f_1\in K$ since $\mathrm{Der}_k(K)$ is one-dimensional over $K$. Moreover, $f_1$ lies in $kK^p$ since the $v(T_0)$ and $v(U_0)$ commute, and is non-constant since the action is faithful and thus $v(T_0)$ and $v(U_0)$ must be $k$-linearly independent. Now, $v(T_1)(x)=x_1$ for some $x_1\in kK^p$, since $v(T_1)$ commutes with $\partial_x$. Moreover $v(T_1)_{|kK^p}$ is a derivation of order $p$ and $v(T_1)(x^p)=(v(T_0)(x))^p=1$ (see Remark \ref{rmktecn}). The differential operator $v(T_1)$ commutes also with $v(U_0)=f_1\partial_x$, hence $$v(T_1)(f_1)=f_1\partial_x(x_1)=0$$ that is $f_1$ must lie in $kK^{p^2}$. Moreover, $x_1$ is such that $$  v(T_1)^{p-1}(x_1)=v(T_1^p)(x)=v(T_0)(x)=1.$$ Consider now $v(U_1)$: as before, $v(U_1)(x)=x_2$ for some $x_2\in kK^p$ because of the commutativity with $\partial_x$, $v(U_1)_{|kK^p}$ is a derivation of order $p$ and by Remark \ref{rmktecn} we have $v(U_1)(x^p)=(v(U_0)(x))^p=f_1^p$. Hence $v(U_1)_{|kK^p}=f_1^pv(T_1)_{|kK^p}$. The differential operator $v(U_1)$ commutes also with $v(T_1)$, thus $$v(T_1)(x_2)=v(T_1)v(U_1)(x)=v(U_1)v(T_1)(x)=v(U_1)(x_1)=f_1^pv(T_1)(x_1)=v(T_1)(f_1^px_1).$$ Finally, $$f_1=v(U_0)(x)=v(U_1^p)(x)=(f_1^p)^{p-1}v(T_1)^{p-1}(x_2)=$$$$(f_1^p)^{p-1}v(T_1)^{p-1}(f_1^px_1)=f_1^{p^2}v(T_1)^{p-1}(x_1)=f_1^{p^2}$$ and this condition contradicts the fact that $f_1$ had to be non constant. Therefore there is no faithful rational $G$-action on any curve. Notice that we nevertheless showed that there exist faithful rational actions on any curve of the subgroup scheme $H$ of $G$ represented by the Hopf subalgebra $$k[T_0,T_1,U_1]/(T_0^p,T_1^p-T_0,U_1^p),$$ that is $H\simeq E[p]\times_k\alpha_p$ over $k=\overline{k}$. Actually, this kind of behaviour takes always place as shown in the following result.
\end{example}

The following Proposition generalizes a result of Brion \cite[Lemma 3.6]{brion2022actions} stating that every variety of positive dimension admits a faithful rational action of $\alpha_p^l$ for any $l\geq1$. %\textcolor{bibi}{\textbf{modifica}: la proposizione sui prodotti era falsa (vedi controesempio sopra), quella che segue e' la versione corretta}

\begin{prop}\label{faithrationalactions}
      Let $k$ be perfect, $G$ be an infinitesimal commutative unipotent $k$-group scheme and $X$ be a $k$-variety of dimension $n$. If $\dim_k(\Lie (G))\leq n$, then for every $\ell\geq0$ there exists a faithful rational action of $G\times_k\ker(F_G)^\ell$ on $X$.
\end{prop}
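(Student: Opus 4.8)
The plan is to reduce faithfulness to the socle and then to build $\ell$ pairwise-commuting $G$-actions whose socle derivations are $k$-linearly independent. First I would invoke Proposition \ref{ifffaithful}: a $G^\ell$-action is faithful if and only if the induced action of $\soc(G^\ell)$ is faithful. By Lemma \ref{soc1} (applied iteratively to the $\ell$ factors) and Lemma \ref{diag}, $\soc(G^\ell)=\soc(G)^\ell=\alpha_p^{r\ell}$, where $r$ is the largest integer with $\alpha_p^r\subseteq G$. Since giving a $G^\ell$-action on $X$ amounts to giving $\ell$ $G$-actions on $K=k(X)$ whose images in $\mathrm{Diff}_k(K)$ commute pairwise, the goal becomes: produce $\ell$ commuting $G$-actions for which the $r\ell$ socle derivations are $k$-linearly independent in $\mathrm{Der}_k(K)$.

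For the seed I would take a generically free rational $G$-action on $X$ from Theorem \ref{mainthm} (available since $\dim X=n\ge s$), whose invariant field $L=K^{G}$ has transcendence degree $n\ge1$ over $k$ and hence contains an element $\tau$ transcendental over $k$. When $G$ has \emph{height one}, i.e. $G\simeq\prod_i W_{n_i}^1$, the construction is transparent: a $G$-action is a commuting family of derivations $D_i$ with $D_i^{p^{n_i}}=0$ (the Cartier dual $\prod_i\alpha_{p^{n_i}}$ has primitive generators, so there are no correction terms), so for the $j$-th copy one sets $D_i^{(j)}=\tau^{\,j}D_i$. These lie in the common kernel of all the $D_i$ (as $\tau\in K^{G}$), so $(\tau^{\,j}D_i)^{p^{n_i}}=\tau^{\,jp^{n_i}}D_i^{p^{n_i}}=0$, all copies commute, and the socle derivations $\tau^{\,j p^{n_i-1}}D_i^{p^{n_i-1}}$ are $k$-linearly independent because distinct powers of $\tau$ are and the $D_i^{p^{n_i-1}}$ are $K$-linearly independent. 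This already settles the height-one case and recovers Brion's $\alpha_p^\ell$-statement.

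For \emph{general} $G$ plain scaling breaks down: beyond height one the comultiplication carries the nontrivial Witt-cocycle correction $R_{ij}$ of Proposition \ref{structureicu}, and the compatibility-with-products identity then forces the scaling factor to satisfy an equation like $\sigma=\sigma^{p^{t}}$, impossible for a transcendental $\sigma$ (this is exactly the failure of $\G_m$-gradability, visible already for $G=\ker(F-V)$ of Example \ref{ptorsion}). I would therefore argue by induction on $\ell$. Assuming a faithful $G^{\ell-1}$-action on $X$ with invariant field $L$ (again of transcendence degree $n\ge s$), Theorem \ref{mainthm} gives a generically free $G$-action on $\spec L$; I would then lift its defining operators from $L$ to $K$ so that they act only along the $p$-basis directions of $L$ and trivially along the complementary inseparable layers produced by the first $\ell-1$ copies. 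Such a lift commutes with the earlier copies (whose operators are supported on the complementary directions) and has socle $k$-linearly independent from theirs, completing the induction.

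The step I expect to be the \textbf{main obstacle} is precisely this lift. One must choose a $p$-basis of $K/kK^p$ adapted to the splitting "earlier copies' fibre directions $\sqcup$ new base directions", and then re-run the differential-equation machinery of the proof of Theorem \ref{mainthm}: Lemma \ref{extendtopbasis} to define the lifted operators on the chosen $p$-basis, Lemma \ref{commder} to recognise the relevant commutators and $p^{m}$-th power defects as derivations, and Corollary \ref{systemsol} to solve the resulting systems, now under the added constraint that the new operators commute with all previously constructed operators and not merely with the lower operators of their own copy. Since the new copy is generically free on $L$, the solvability criteria of Corollary \ref{systemsol} are satisfied inside $L$, and commutation with the earlier copies reduces to the chosen directions being complementary; assembling these verifications is the technical heart of the argument.
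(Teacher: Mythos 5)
Your overall strategy (reduce faithfulness to the socle via Proposition \ref{ifffaithful}, then produce $\ell$ commuting copies of $G$ whose socle derivations are $k$-linearly independent) and your height-one construction are exactly the paper's: the paper scales the kernel derivations $D_1,\dots,D_h$ of one generically free $\ker(F_G)$-action by $k$-linearly independent elements $f_{ij}$ of $L=k\left(X/\ker(F_G)\right)$, of which your powers $\tau^j$ of an invariant transcendental are a special case, and in height one this already finishes the proof. The genuine gap is in your general case. The inductive step --- lifting the operators of a $G$-action on $\spec(L)$, $L=K^{G^{\ell-1}}$, to operators on $K$ commuting with all previously constructed ones --- is never carried out, and it would not be a routine completion. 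Since $K/L$ is purely inseparable, a derivation $D$ of $L$ admits \emph{no} extension to $K$ at all unless $D(\alpha^{p^e})=0$ for generators $\alpha$ of $K/L$, so ``acting trivially along the complementary inseparable layers'' is a constraint on which actions on $L$ you may start from, not a normalization you can impose afterwards; operators ``supported on complementary directions'' need not commute (compare $\partial_x$ and $x\partial_y$); and Corollary \ref{systemsol} cannot be applied to your enlarged system (old copies together with the new one), because its hypotheses --- each operator of full order on the invariants of the preceding ones --- encode exactly the dimension bound, while the combined family has Lie algebra dimension $\ell s$, typically larger than $n$. Finally, your induction hypothesis (mere faithfulness of the $G^{\ell-1}$-action) does not retain the adapted structure the step needs, so the induction does not close as stated.

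The idea you are missing is that the new copies never need to live over the invariant field of the previous ones. The paper runs your height-one trick for arbitrary $G$: it forms the $\ell$ scaled kernel actions $\{f_{i1}D_1,\dots,f_{ih}D_h\}$, $i=1,\dots,\ell$, and extends \emph{each one separately} to a generically free $G$-action using the second (``extension'') part of Theorem \ref{mainthm}. This sidesteps the $\sigma=\sigma^{p^t}$ obstruction you correctly identified, because only honest derivations are scaled; the higher operators are re-solved from scratch by the theorem rather than scaled. Faithfulness of the resulting $G^\ell$-action is then tested only on $\ker(F_{G^\ell})=\ker(F_G)^\ell$, where the action is given by the scaled derivations: their $p$-powers $f_{ij}^{p^t}D_j^{p^t}$ remain $k$-linearly independent (perfectness of $k$ enters here), so the kernel acts faithfully and Proposition \ref{ifffaithful} concludes. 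The one point your worries do bear on is that the $\ell$ extended $G$-actions must commute pairwise in $\mathrm{Diff}_k(K)$ in order to assemble into a $G^\ell$-action; the paper leaves this implicit, but in its construction it is the only step left unverified, whereas in your scheme essentially the whole general case remains to be proved.
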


\begin{proof}
   Let $s=\dim_k(\Lie (G))$ and $K=k(X)$ be the function field of $X$. Then $\ker(F_G)$ corresponds to a certain Young diagram $(m_1,\dots,m_h)$ for some $h\geq 1$ (we refer the reader to \ref{sectionyoung}), $\sum_{i=1}^hm_{i}=s$ and $\ker(F_G)^\vee\simeq\prod_{i=1}^h\alpha_{p^{m_{i}}}.$ We know (Proposition \ref{frobkeraction}) that there exist generically free rational actions of $\ker(F_G)$ on $X$. By \cite[III.\textsection2, Corollary 2.7]{DG}, to give such a rational action corresponds to giving a set of derivations $\{D_1,\dots,D_h\}$ on $K$ commuting pairwise, with $D_{i}$ of order $p^{m_{i}}$ for every $i=1,\dots,h$ and such that all the $p$-powers of these derivations are $K$-linearly independent. Moreover, by Theorem \ref{mainthm} this action can be extended to a generically free rational $G$-action on $X$. Let $L=K^G$ be the function field of $X/G$. Take non-constant $k$-linearly independent elements $$\{f_{i1},\dots,f_{ih}\mid i=1,\dots,\ell\}$$ in $L$. Then $\{f_{i1}D_{1},\dots,f_{ih}D_{h}\}$ is still a set of derivations defining a generically free rational action of $\ker(F_G)$ on $X$. Consider then the induced rational action of $G\times_k\ker(F_G)^\ell$ on $X$ and notice that it is faithful by Proposition \ref{ifffaithful}. Indeed the rational action of $\ker(F_{G\times_k\ker(F_G)^\ell})=\ker(F_G)^{\ell+1}$ is given by the set of derivations $$\{D_1,\dots,D_h,f_{i1}D_{1},\dots,f_{ih}D_{h}\mid i=1,\dots,l\}$$ whose $p$-powers are $k$-linearly independent and thus it is faithful.
\end{proof}

\begin{rmk}\label{genfreermk} 
Notice that as a direct consequence we have that for any infinitesimal commutative unipotent $k$-group scheme $G$ of height one, there exist faithful rational $G^\ell$-actions on any $k$-variety of dimension $\geq\dim_k(\Lie(G))$ for any $\ell\geq1$.
In the proof we actually prove something more. Indeed we construct a faithful rational action of $G^\ell$ such that the induced action of any copy of $G$ is generically free. %\textcolor{bibi}{vale somehow anche per la prop sotto}
\end{rmk}

\begin{prop}\label{suffcondfaithact}
    Let $k$ be perfect, $G$ be an infinitesimal commutative unipotent $k$-group scheme and $X$ be a $k$-variety of dimension $n$. If $V_G^n=0$ then there exists a faithful rational $G$-action on $X$.
\end{prop}

\begin{proof}
We begin by recalling that, by Proposition \ref{infwitt}, $G$ can be embedded in $(W_n^m)^r$ for some $m,r\geq1$.
It is then enough to prove that there exists a faithful rational action of $(W_n^m)^r$ on $X$ for any $m,r\geq1$. By Proposition \ref{frobkeraction}, there exist generically free rational actions of $W_n^m$ on $X$ for any $m\geq1$ and, by Proposition \ref{infinitesimalactions}, to give such an action corresponds to giving a set of differential operators $\{D_0,\dots,D_{m-1}\}$ on the function field $k(X)$ of $X$ commuting pairwise, with $D_{i}$ of order $p^{i}$ and $p^n$-nilpotent ($D_i^{p^{n-1}}\neq0$) for every $i=0,\dots,m-1.$ Let $L$ be the function field of $X/W_n^m$. Take $k$-linearly independent elements $\{f_1,\dots,f_r\}$ in $L$. Then $\left\{f_iD_0,f_i^pD_1,\dots,f_i^{p^{m-1}}D_{m-1}\mid i=1,\dots,r\right\}$ gives a faithful rational action of $(W_n^m)^r$ on $X$. Indeed, since we took the $f_i's$ in $L$, these differential operators all commute pairwise and are moreover $p^n$-nilpotent. In addition, by the weighted homogeneity of Witt vectors, they respect the property of compatibility with products. Finally, the action is faithful because the action of the Frobenius kernel is faithful, since we chose $f_1,\dots,f_r$ linearly independent over $k$.
\end{proof}

Recall from \ref{sectionyoung} that if we take $G_1,\dots,G_l$ commutative unipotent $k$-group schemes of height one, there exists a smallest commutative unipotent $k$-group scheme $G$ of height one containing all of them. Precisely, $G$ corresponds to the smallest Young diagram containing all the Young diagrams $\tau(G_i)$ for all $i$. Explicitly, if $\tau(G_i)=(n_{1i},\dots,n_{s_ii})$ for some $s_i\geq 1$ and for $i=1,\dots,l$ then $\tau(G)=(n_1,\dots,n_s)$ where $s=\max\{s_1,\dots,s_l\}$ and $n_j=\max\{n_{j1},\dots,n_{jl}\}$ for every $j=1,\dots,s$. For example, if we take $G_1=W_3^1\times_k\alpha_p$ and $G_2=W_2^1\times_kW_2^1$, then $\tau(G)=\yng(3,2)$ and $G=W_3^1\times_kW_2^1$. 

Using this, we obtain a sort of converse of Proposition \ref{faithrationalactions} in the case of group schemes of height one. The following Proposition shows that if $G_1\times_k\dots\times_kG_l$ acts on a variety $X$ and the action restricted to every $G_i$ is generically free, then there exists a generically free $G$-action on $X$. 

\begin{prop}\label{arbitraryproducts}
Let $k$ be perfect, $H=\prod_{i=1}^lG_i$  be an infinitesimal commutative unipotent $k$-group scheme of height one and $X$ be a $k$-variety of dimension $n$. Then there exists a faithful rational $H$-action on $X$ which induces generically free $G_i$-actions for every $i=1,\dots,l$ if and only if there exists an infinitesimal commutative unipotent $k$-group scheme $G$ of height one such that $\dim_k(\Lie (G))\leq n$ and $G_i\hookrightarrow G$ for all $i=1,\dots,l$.
\end{prop}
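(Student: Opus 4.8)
The plan is to first reduce to the case $k=\overline{k}$: by Lemma \ref{bschngenfaithful} and Proposition \ref{bschngenfree} both faithfulness and generic freeness of a rational action may be tested after base change to the algebraic closure, while $\dim(X)$, the Young diagram of each $G_i$ and the dimensions of the Lie algebras are unchanged. I would then phrase everything through the dictionary of Subsection \ref{sectionyoung}. Writing $G_i\simeq\prod_jW_{m_{ji}}^1$ with $\tau(G_i)=(m_{1i},\dots,m_{s_ii})$, the smallest commutative unipotent $k$-group scheme of height one containing all the $G_i$ is the one with Young diagram $\tau(G)=(n_1,\dots,n_s)$, where $s=\max_is_i$ and $n_j=\max_im_{ji}$; its Lie algebra has dimension $\sum_{j=1}^sn_j=\sum_j\max_im_{ji}$. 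Since every height-one container of all the $G_i$ contains this minimal $G$, the right-hand condition is equivalent to requiring that this particular $G$ satisfy $\dim_k(\Lie(G))\le n$.

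For the implication $(\Leftarrow)$ I would proceed as follows. Given a height-one container $G$ with $\dim_k(\Lie(G))\le n$, Proposition \ref{faithrationalactions} together with Remark \ref{genfreermk} produces a faithful rational $G^l$-action on $X$ for which the induced action of each of the $l$ copies of $G$ is generically free. Embedding $H=\prod_{i=1}^lG_i\hookrightarrow G^l$ factorwise through the inclusions $G_i\hookrightarrow G$ and restricting, one obtains a rational $H$-action. It is faithful, since the kernel of $H\to\Aut_X$ equals $H\cap\ker(G^l\to\Aut_X)$, which is trivial; and the action it induces on $G_i$ is the restriction to $G_i\subseteq G$ of the generically free action of the $i$-th copy of $G$, hence generically free by Remark \ref{stabsgrp}.

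For the converse $(\Rightarrow)$, which is the substance of the statement, I would translate the datum of the faithful $H$-action into commuting differential operators on $K=k(X)$. Generic freeness of each $G_i$-action gives, by \cite[III.\textsection2, Corollary 2.7]{DG}, for every $i$ a family of pairwise commuting derivations $D_{1i},\dots,D_{s_ii}$ with $D_{ji}$ of order $p^{m_{ji}}$ and all of whose $p$-power iterates are $K$-linearly independent. Faithfulness of the whole $H$-action is equivalent, via Proposition \ref{ifffaithful} applied to $\soc(H)=\prod_i\soc(G_i)=\alpha_p^{\sum_is_i}$, to the $k$-linear independence of the socle derivations $D_{ji}^{p^{m_{ji}-1}}$. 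The goal is then to manufacture out of this data a generically free rational action of the minimal container $G$, that is, pairwise commuting derivations $E_1,\dots,E_s$ with $E_j$ of order $p^{n_j}$ and with all their $p$-power iterates simultaneously $K$-linearly independent: for each row $j$ one takes $E_j$ realized by some $G_{i}$ attaining $m_{ji}=n_j$, and verifies, exactly as in Lemma \ref{commder} and Corollary \ref{systemsol}, that such a system satisfies the commutation and $p$-power relations defining $G$; since by Proposition \ref{kerfrob} generic freeness of the $G$-action follows from that of $\soc(G)=\alpha_p^s$, this would exhibit a generically free $G$-action, whence $\dim_k(\Lie(G))=\sum_jn_j\le n$ by Proposition \ref{dimlie}.

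The main obstacle is precisely this assembling step. The separate generic-freeness hypotheses only furnish $K$-linear independence of the powers \emph{inside} each fixed $G_i$; a priori the top powers coming from different $G_i$ could become $K$-proportional, which would prevent the $\sum_jn_j$ operators $\{E_j^{p^t}\}$ from being $K$-independent. It is here that faithfulness must be used: the $k$-linear independence of the socle derivations is what one leverages, at the bottom (socle) layer, to guarantee enough genuinely independent directions, after which independence at the higher layers is propagated through the tower $K\supseteq K^{E_1}\supseteq\cdots$ by means of Lemma \ref{onepreimage} and the commutation relations, in the same inductive, layer-by-layer fashion as in the proof of Theorem \ref{mainthm}. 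Making this propagation rigorous, and in particular controlling how the different $G_i$ interact at each level of the tower, is the delicate point of the argument.
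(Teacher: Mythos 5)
Your reduction to $\overline{k}$ and your $(\Leftarrow)$ argument are fine and agree with the paper (which disposes of that direction by citing Remark \ref{genfreermk}). The problem is in $(\Rightarrow)$, which is the substance of the statement. You correctly reduce it to producing pairwise commuting derivations $E_1,\dots,E_s$ on $K$, with $E_j$ of order $p^{n_j}$ and with the socle powers $E_j^{p^{n_j-1}}$ $K$-linearly independent; commutation is automatic because all the $D_{ji}$ commute, and for a height-one product $\prod_j W_{n_j}^1$ this data already defines an action by \cite[III.\textsection2, Corollary 2.7]{DG}, so your appeal to Lemma \ref{commder} and Corollary \ref{systemsol} is not needed (those are tools for extending beyond height one). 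You also correctly identify the one real difficulty: socle powers assembled from \emph{different} factors $G_i$ could a priori become $K$-proportional. But you then stop, declaring that making the independence argument rigorous "is the delicate point of the argument." That delicate point \emph{is} the proof; leaving it open means the proposal has a genuine gap exactly where the work lies.

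Moreover, the mechanism you gesture at is the wrong one. Faithfulness of the $H$-action only gives $k$-linear independence of the socle derivations $D_{ji}^{p^{m_{ji}-1}}$, which is far too weak to yield the $K$-linear independence you need, and no "propagation through the tower" will upgrade it; indeed the paper remarks explicitly after its proof that faithfulness of the $H$-action is never used. The paper's resolution is a pigeonhole argument internal to a single factor, combined with a more flexible choice of the $E_j$'s. Fix $f$ with $n_j=m_{jf(j)}$ and construct the $E_j$ recursively: given $E_1,\dots,E_r$ whose socle powers $C_r=\{E_k^{p^{n_k-1}}\mid k\leq r\}$ are $K$-independent, observe that $\tau(G_{f(r+1)})$ has at least $r+1$ rows of length $\geq n_{r+1}$, so generic freeness of the $G_{f(r+1)}$-action \emph{alone} provides $r+1$ many $K$-linearly independent socle derivations $D_{k f(r+1)}^{p^{m_{kf(r+1)}-1}}$, $k=1,\dots,r+1$. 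Since $C_r$ spans only an $r$-dimensional $K$-space, some $D_{k_0 f(r+1)}^{p^{m-1}}$ (where $m=m_{k_0f(r+1)}$) lies outside that span; one then sets $E_{r+1}=D_{k_0 f(r+1)}^{p^{m-n_{r+1}}}$, a derivation of order $p^{n_{r+1}}$ whose socle power is exactly that element. So the correct move is not, as in your sketch, to take $E_j$ from row $j$ of the factor attaining the maximum, but to take a suitable $p$-power iterate of a \emph{possibly different} row of that factor, selected by a dimension count. With this recursion done, your concluding steps (generic freeness of $\soc(G)$, hence of $G$ by Proposition \ref{kerfrob}, hence $\dim_k(\Lie(G))=\sum_j n_j\leq n$ by Proposition \ref{dimlie}) are exactly the paper's.
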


\begin{proof}
     The 'if' part is clear by Remark \ref{genfreermk}. Suppose now that there exists a faithful rational $H$-action on $X$ which induces generically free $G_i$-actions for every $i=1,\dots,l$ and let $K$ denote the function field of $X$. By assumption, every $G_i$ is of height one and thus corresponds to a Young diagram  $\tau(G_i)=(n_{1i},\dots,n_{s_ii})$ for some $s_i\geq 1$ and $n_{s_ii}\neq0$. Recall that $s_i$ corresponds to the length of the first column of $\tau(G_i)$, that is $\dim_k(\Lie (\soc(G_i))$ (see Lemma \ref{younglemma}). The $H$-action is determined by a set of derivations $D_{ji}$, with $i=1,\dots l$ and $j=1,\dots, s_i$, such that they commute pairwise and $D_{ji}^{p^{n_{ji}}}=0$.
    The fact that each $G_i$-action is generically free is equivalent to the fact that $$S_i=\left\{D_{ji}^{p^{n_{ji}-1}} \mid  j=1,\dots, s_i\right\}$$ is linearly independent over $K$ for any $i=1,\dots, l$. Indeed $S_i$ represents the action induced by $\soc(G_i)$.
     Let $G$ be the smallest infinitesimal commutative unipotent group scheme of height one containing $G_i$ for all $i$. Then $\tau(G)=(n_1,\dots,n_s)$ where $s=\max\{s_1,\dots,s_l\}$ and $n_j=\max\{n_{j1},\dots,n_{jl}\}$ for every $j=1,\dots,s$. We also fix a function $$f:\{1,\dots,s\}\mapsto \{ 1,\dots,l\}$$ such that $n_j=n_{jf(j)}$. This means that for the $j$-th line of the Young diagram of $G$ we are choosing the $j$-th line of $G_{f(j)}$.
    Now we want to construct an action of $G$ on $X$, or equivalently a set of derivations $E_i$ which commute pairwise and such that $E_{i}^{p^{n_i}}=0$ for any $i=1,\dots,s$. 
    We define $E_1:=D_{1f(1)}$. Now suppose we have defined $E_r$, with $1\le r\le s-1$, such that the set $$C_r=\left\{E_k^{p^{n_k-1}}\mid k=1,\dots,r\right\}$$ is linearly independent over $K$, then we define $E_{r+1}$ in such way that it does not belong to the space generated by $C_r$. We remark that $\tau(G_{f(r+1)})$ has at least $r+1$ lines which have at least $n_{r+1}$ squares. Now $$\left\{D_{k f(r+1)}^{p^{n_k-1}}\mid k=1,\dots,r+1\right\}$$ is a set of $r+1$ $K$-linearly independent derivations, therefore there exists $k_0\in\{1,\dots,r+1\}$ such that $D_{k_0 f(r+1)}^{p^{n_{k_0}-1}}$ does not belong to the $K$-vector space generated by $C_r$.
    We define $E_{r+1}:= D_{k_0 f(r+1)}^{p^{n_{k_0}}-p^{n_{r+1}}}$. Its order is $p^{n_{r+1}}$. Therefore we constructed an action of $G$ on $X$.
    By construction we have that the set $$\left\{E_i^{p^{n_i-1}}\mid i=1,\dots,s\right\}$$ is $K$-linearly independent. This set corresponds to the induced action of the socle of $G$. Hence the action of the socle of $G$ is generically free, and the same is true for the action of $G$ by Proposition \ref{kerfrob}. 
    This implies, by Proposition \ref{dimlie}, that $\dim_k(\Lie(G))\le n$,  as wanted.
\end{proof}

\begin{rmk}
Notice that actually in the above proof we never used the fact that the $H$-action was faithful. Moreover we remark that the condition on the existence of such actions is purely combinatorial and it is equivalent to asking, using the notation of the proof, that $\dim_k(\Lie (G))=\sum_{j=1}^sn_j\le n$. For example, if we take $G_1$ and $G_2$ corresponding respectively to $$\yng(3,1)\quad\mbox{ and }\quad\yng(2,2)$$ then $$G=\yng(3,2)$$ and the Proposition implies that even if there exist generically free actions of $G_i$ on every variety of dimension $4$, there is no action of $G_1\times_kG_2$ on a variety of dimension $4$ which is generically free when restricted to $G_i$ for $i=1,2$. On the other hand, there exist such actions on every variety of dimension $\geq5$.
\end{rmk}

We finish the paper illustrating the above results in the case of the connected part of the $p$-torsion of abelian varieties.

\begin{example}
    Let $k$ be algebraically closed and $A$ be an
    abelian variety defined over $k$ of dimension $g$, $p$-rank $f$ and $a$-number $a$.
    If there exists a faithful rational action of $A[p]^0$ on a curve, then by Proposition \ref{actionkerF}, $f\leq1$ and either $A[p]^{0,u}$ is trivial (if $f=1$) or $V_{\ker(F_{A[p]^{0,u}})}=0$ (if $f=0$). In either case, it holds $\soc(A[p]^0)=\ker(F_{A[p]}).$ As a consequence one has that $a+f=g$. We then have the following two cases.
     \begin{itemize}
        \item If $f=1$, then $f=1=g$, that is $A$ is an ordinary elliptic curve and faithful rational actions of $A[p]^0=\mu_p$ on any curve always exist. 
        \item If $f=0$, then $a=g$ that is $A$ is a superspecial abelian variety.  Superspecial abelian varieties are always isomorphic to products of supersingular elliptic curves \cite[Theorem 2]{OortSSpAV}. In Example \ref{counterexample} we saw that there is no faithful rational action of $E[p]\times_kE[p]$ on any curve, for $E$ supersingular. 
    \end{itemize}
    Therefore, we can conclude that there exists a faithful rational action of $A[p]^0$ on a curve if and only if $A$ is an elliptic curve.
    More generally, if there exists a faithful rational action of $A[p]^0$ on a variety of dimension $n$, then $0\leq g-f\leq a(n-f)$.
    Indeed, by Proposition \ref{actionkerF}, we have $f\leq n$ and $V^{n-f}_{\ker(F_{A[p]^{0,u}})}=0$ (if $f=n$ then there is no unipotent part). This means that $$\ker(F_{A[p]})\simeq\prod_{i=1}^aW_{n_i}^1\times_k\mu_p^f$$ where $n_i\leq n-f$ for every $i\in I$. As a consequence, $g-f=\sum_{i\in I}n_i\leq a(n-f)$. 
    
    Notice that if $g\leq n$ we don't get any interesting information and moreover by Remark \ref{thm+Brion} there exist always generically free rational actions of $A[p]^0$ on varieties of dimension $n$. Nevertheless, such faithful rational actions may occur even when $g>n$ (if $n>1$, as seen in the first part). For example, by Proposition \ref{suffcondfaithact}, there exist faithful rational actions of the $p$-torsion of a superspecial abelian variety of any dimension on any variety of dimension $\geq2$ 
    (but not on curves). 
    
    The numerical condition $g-f\leq a(n-f)$ holds true for any $G\simeq G^u\times_kG^d$ infinitesimal commutative trigonalizable $k$-group scheme with a faithful rational action on a variety of dimension $n$, with $a=\dim_k(\Lie (\soc(G^u)))$, $f=\dim_k(\Lie (\soc(G^s)))$ and $g=\dim_k(\Lie (G))$.
\end{example}

\printbibliography
\noindent\textsc{Institut de Math\'ematiques de Bordeaux, 351 Cours de la Lib\'eration, 33405 Talence, France}\\\textit{Email Address:} \textbf{bianca.gouthier@math.u-bordeaux.fr}\\
\noindent\textsc{Mathematical Institute, Heinrich-Heine-University, Universitätsstr. 1, 40225 Düsseldorf, Germany}\\
\textit{Email Address:} \textbf{bianca.gouthier@hhu.de}
\end{document}